\numberwithin{equation}{section}
\newtheorem{myDef}{Definition}[section]
\newtheorem{myTheo}{Theorem}[section]
\newtheorem{myLem}{Lemma}[section]
\newcommand{\tabcaption}{\def\@captype{table}\caption}
\newtheorem{rem}{Remark}[section]
\newtheorem{exam}{Example}[section]
\newcommand{\snmii}[1]
{
  \left\vert\kern-0.25ex
  \left\vert\kern-0.25ex
  \left\vert
  #1
  \right\vert\kern-0.25ex
  \right\vert\kern-0.25ex
  \right\vert
}
\numberwithin{equation}{section}
\date{}
\title{{\Large \bf }}
\begin{document}
\title
{
  \Large\bf Robust globally divergence-free Weak Galerkin finite element method for   incompressible Magnetohydrodynamics flow
  \thanks
  {
    This work was supported in part by National Natural Science Foundation
    of China  (12171340) and National Key R\&D Program of China (2020YFA0714000).
  }
}
\author
{
  Min Zhang \thanks{School of Mathematics, Sichuan University, Chengdu 610064, China. Email: 894858786@qq.com }, \
  Tong Zhang  \thanks{School of Mathematics, YanTai University, Yantai 264005, Chine. Email: tzhang@ytu.edu.cn }, \
    Xiaoping Xie \thanks{School of Mathematics, Sichuan University, Chengdu 610064, China.
  Email: xpxie@scu.edu.cn},

  }
\date{}
\maketitle
\begin{abstract}
This paper develops a weak Galerkin (WG) finite element method of arbitrary order for the steady incompressible Magnetohydrodynamics
equations. The WG scheme uses   piecewise polynomials of degrees $k(k\geq 1),k,k-1$ and $k-1$ respectively for the approximations of  the velocity,  the magnetic field, the  pressure, and the  magnetic pseudo-pressure in the interior of elements, and uses piecewise polynomials of degree $k $  for their numerical traces on the interfaces of elements. The method is shown to yield globally divergence-free approximations of the velocity and magnetic fields.   We give existence and uniqueness results  for  the discrete scheme  and derive  optimal a priori error estimates. We also present a convergent  linearized  iterative algorithm.  Numerical experiments are provided  to verify the  obtained theoretical results.

\vskip 0.2cm\noindent
  {\bf Keywords:}  incompressible Magnetohydrodynamics flow, Weak Galerkin method, globally divergence-free, error estimate

  {\bf MSC}:  65N55, 65F10, 65N22, 65N30.
\end{abstract}

\section{Introduction}

Magnetohydrodynamics (MHD) equations describe the  basic physics laws  of electrically conducting fluid flow interacting with magnetic fields,
and are widely used in engineering areas \cite{Davidson2001, GLL2006, Muller2001, Shercliff1965, Walker1980}
such as  magnetic propulsion devices,   optical modulation and switch,  continuous metal casting,  semi-conductor manufacture, and nuclear reactor technology.  This paper is to consider a finite element analysis of  a steady incompressible MHD flow model.

The  incompressible MHD flow  is described by
 a coupling  system of   Navier-Stokes equations and Maxwell equations.
Some early research   on the finite element analysis of   MHD    can be found in  \cite{GMP1991, Peterson1988,Winowich1983}. In particular, in \cite{GMP1991} Gunzburger et al.  considered   a steady  incompressible MHD model  in three dimensions, showed the existence and uniqueness of  a weak solution, and proved an optimal
estimate for  a mixed finite element discretization.  In recent twenty years there have   developed  many finite element methods for the incompressible MHD equations; see, e.g.
\cite{G2000,WLFH2017,ZHZ2014,ZMZ2016,
GM2003, 
GLSW2010,S2004,SMF2019,SY2013,W2000,
HMX2017, HLMZ2018} for steady models
 and   \cite{GQ2019,H2015,P2008,YH2018, DLM2020,DH2018,ZYB2018} for  unsteady models.

There are two divergence
constraints in the incompressible MHD equations, i.e. the velocity and magnetic fields are both divergence-free, which correspond to the conservation of mass and magnetic flux, respectively.
How to obtain   exactly divergence-free   approximations  is an important issue in   numerically solving the related problems,  since numerical methods with poor  conservation may lead to instabilities \cite{T2000, JLMNR2017, ABLR2010, L2009, OR2004,BB1980,JWP1996 }.
In particular, for   incompressible fluid flows the exactly divergence-free discretizations automatically lead to pressure-robustness in the sense that the velocity approximation  error   is independent of    the pressure approximation \cite{LM2016,JLMNR2017}.
We refer to \cite{CKS2007,CFX2016,ZCX2017,HX2019, HLX2019 ,XZ2010, Chen-Xie2023} for some divergence-free finite element methods for  the incompressible fluid flows, and to \cite{CLS2004, BLS2007, HZ2012} for several   divergence-free finite element methods for Maxwell equations.

 For the  incompressible MHD equations, there are considerable efforts devoted to  divergence-free finite element approaches \cite{LXY2011,LX2012,GLSW2010,HMX2017,LZZ2021,HLMZ2018}.
  Greif et al.
  \cite{GLSW2010} proposed a  mixed  interior-penalty discontinuous Galerkin (DG) method with the exactly divergence-free velocity, where the velocity field is discretized by   $H(\text{div}; \Omega)$-conforming Brezzi-Douglas-Marini   elements,  the pressure by fully discontinuous finite elements, and  the magnetic field   by $H(\text{curl}; \Omega)$-conforming Nedelec elements.
Li et al.  \cite{LXY2011,LX2012} and Hu et al. \cite{HMX2017} developed    central DG methods and  stable finite element methods  with the exactly divergence-free magnetic field, respectively.
Hiptmair et al. \cite{HLMZ2018} developed a   mixed DG method with  the exactly divergence-free velocity and  magnetic field for three-dimensional transient incompressible magnetohydrodynamic equations, where   the velocity and pressure approximations are as same as those in  \cite{GLSW2010}, and the divergence-free property of magnetic field  is realized by means of a magnetic vector potential.   Li et al. \cite{LZZ2021} presented a constrained transport finite element method  with  the exactly divergence-free velocity and  magnetic field for  three-dimensional incompressible resistive MHD equations, by following the same ideas as in \cite{CKS2007,GLSW2010}.

This paper is to develop an arbitrary order weak Galerkin (WG) scheme with    exactly divergence-free  velocity and  magnetic approximations for the following   steady incompressible MHD equations:
 \begin{eqnarray}
\label{mhd1*}
-\frac{1}{H^{2}_a}\Delta \mathbf{u}+\frac{1}{N}\nabla \cdot(\mathbf{u}\otimes \mathbf{u})+\nabla p - \frac{1}{R_{m}} \nabla \times\mathbf{B}\times \mathbf{B}=\mathbf{f}, &&  \text{in}\ \ \Omega,
\\
\label{mhd2*}
\nabla \cdot \mathbf{u}=0,&&  \text{in}\ \ \Omega,
\\
\label{mhd3*}
\frac{1}{R_{m}}\nabla\times \nabla\times\mathbf{B}-\nabla\times(\mathbf{u}\times \mathbf{B})+\nabla r=\mathbf{g}, && \text{in}\ \ \Omega,
\\
\label{mhd4*}
\nabla \cdot \mathbf{B}=0,&& \text{in}\ \ \Omega,
\end{eqnarray}
with the homogeneous boundary conditions
\begin{eqnarray}\label{mhd5*}
\mathbf{u}=\mathbf{0}, && \text{on}\ \ \partial\Omega,\\
\label{mhd6*}
\mathbf{B}\times\mathbf{n}=\mathbf{0},  &&  \text{on}\ \ \partial\Omega,\\
\label{mhd7*}
r=0, && \text{on}\ \ \partial\Omega.
\end{eqnarray}
Here $\Omega\in \mathbb{R}^{d}(d=2,3)$ is a polygonal or polyhedral domain,
$ \mathbf{u}=(u_1,u_2,...,u_d)^T$ is the velocity vector,
$p$  the pressure,
$\mathbf{B}=(B_1,B_2,...,B_d)^T$   the magnetic field,
and $r$   the magnetic pseudo-pressure. The right-hand side terms $\mathbf{f}, \mathbf{g}\in [L^2(\Omega)]^d $ 
are the forcing functions.   $H_a, \ N $ and $ R_m$ are the Hartmann number, the interaction parameter and the magnetic Reynolds number, respectively.

 The WG method was first proposed by Wang and Ye for  second-order elliptic problems \cite{Wang;Ye2013,Wang;Ye2014}.
Due to the use of weakly defined gradient/divergence operators over
functions with discontinuity, this method allows the use of totally discontinuous
functions on finite element partitions with arbitrary shape of polygons/polyhedra. It also has  the local elimination property, i.e. the unknowns defined in the interior of elements can be locally eliminated by using the numerical traces  defined on the interfaces of  elements. We refer to \cite{CFX2016,ZCX2017,HX2019, HLX2019,MWYZ2015, WWZZ2016,WY2016,ZX2017,ZZLW2018}  for some applications of the WG method to the incompressible fluid flows and Maxwell equations.

In this paper,  we consider the WG discretization of the MHD model \eqref{mhd1*}-\eqref{mhd7*}. The main features of our scheme
 are as follows.
\begin{itemize}
\item
We apply  piecewise polynomials of degrees $k(k\geq 1),k,k-1$ and $k-1$,
 respectively to approximate  the velocity, the magnetic field, the pressure, and the magnetic pseudo-pressure in the interior of elements, and apply piecewise polynomials of degree $k$to approximate their numerical traces
on the interfaces of elements.

\item The scheme is ``parameter-friendly" in the sense that it  does not
require the stabilization parameters to be ``sufficiently large".

\item The scheme gives   globally divergence-free approximations of the velocity and magnetic fields.

\item The unknowns of the velocity, the magnetic field, the
pressure and the magnetic pseudo-pressure   in the interior of elements can be locally eliminated so as to obtain a reduced discrete system of smaller size.

\item  The obtained error estimates are optimal.

\end{itemize}

The rest of this paper is arranged as follows.
Section 2 gives  weak formulations of the model problem.
Section 3 is devoted to the WG finite element scheme and some preliminary results.
In Section 4 we discuss the existence and uniqueness of the discrete solution.
   Section 5 derives a priori error estimates.
Section 6 shows the local elimination property and proposes
  an iteration algorithm for the nonlinear WG scheme.
Finally, We provide some numerical results.

\section{Weak problem}

\subsection{Notation}

For any bounded domain $D\in R^s(s=d,d-1)$, nonnegative  integer $m$ and real number $1\leq q<\infty$, let $W^{m,q}(D)$ and $W_0^{m,q}(D)$ be the usual Sobolev spaces defined on $D$ with norm $||\cdot||_{m,q, D}$ and semi-norm $|\cdot|_{m,q,D}$. In particular,   $H^m(D):=W^{m,2}(D)$ and $H^m_0(D):=W^{m,2}_0(D)$, with $||\cdot||_{m,D}:=||\cdot||_{m,2, D}$ and $|\cdot|_{m,D}:=|\cdot|_{m,2, D}$.   We use $(\cdot,\cdot)_{m,D}$ to denote the inner product of $H^m(D)$, with $(\cdot,\cdot)_D :=(\cdot,\cdot)_{0,D}$.
When $D = \Omega$, we set $||\cdot||_m := ||\cdot||_{m,\Omega},|\cdot|_m := |\cdot|_{m,\Omega}$,
  and $(\cdot,\cdot):= (\cdot,\cdot)_{\Omega}$.
Especially, when
$D \subset R^{d-1}$ we use $\langle\cdot,\cdot\rangle_D$ to replace $(\cdot,\cdot)_{D}$.
For any integer
$k \geq 0$, let $P_k(D)$ denote the set of all polynomials on $D$ with degree no more than $k$. We also need the following spaces:
\begin{eqnarray*}
&&L_0^2(\Omega):=\left\{
v\in L^2(\Omega): \ (v,1)=0
\right\}, \\
&&\mathbf{H}(\text{div},\Omega):=\left\{
\mathbf{v}\in  {L}^2(\mathcal{D})^d:\ \nabla\cdot \mathbf{v}\in L^2(\mathcal{D})
\right\},\\
&&\mathbf{H}(\text{curl};\Omega):=\{\mathbf{v}\in  {L}^2(\mathcal{D})^d:\ \nabla\times\mathbf{v}\in L^2(\Omega)^{2d-3}
\},\\
&&\mathbf{H}_0(\text{curl};\Omega):=\{\mathbf{v}\in \mathbf{H}(\text{curl};\Omega):\ \mathbf{n}\times\mathbf{v}=0 \ \ \text{on} \ \ \partial\Omega
\},
\end{eqnarray*}
where the cross product $\times$ of two vectors is defined as following: for $\mathbf{v}=(v_1,\cdots,v_d)^T, \mathbf{w}=(w_1,\cdots,w_d)^T$,
$$\mathbf{v}\times \mathbf{w}=\left\{
\begin{array}{ll}
v_1w_2-v_2w_1, & \text{if }d=2,\\
(v_2w_3-v_3w_2, v_3w_1-v_1w_3,v_1w_2-v_2w_1)^T, & \text{if }d=3.
\end{array} \right.
$$

Let $\mathcal{T}_h$ be a shape regular  partition of $\Omega$ into closed simplexes, and let
$\varepsilon_h$ be the set of all edges(faces) of all the elements in $\Omega$.
For any $K\in \mathcal{T}_h$, $e\in \varepsilon_h$, we denote by $h_K$ the diameter of $K$
and by $h_e$ the diameter of $e$.
Let $\mathbf{n}_K$  and $\mathbf{n}_e$ denote the outward unit normal vectors
along the boundary $\partial K$ and $e$, respectively. Sometimes we may abbreviate $\mathbf{n}_K$ as $\mathbf{n}$.
 We use  $\nabla_h$, $\nabla_h\cdot$ and $\nabla_h\times$ to denote  respectively  the operators of piecewise-defined gradient, divergence and
   curl with respect to the decomposition $\mathcal{T}_h$.

Throughout this paper, we use $\alpha\lesssim\beta $ to denote    $\alpha\leq C\beta $, where $C$ is a positive constant independent of the  mesh size $h$.

\subsection{Weak form}
For simplicity, we set
\begin{align*}
\mathbf{V}:=[H_0^1(\Omega)]^d,\ \ \
\mathbf{W}:=\mathbf{H}_0(\text{curl};\Omega). 
\end{align*}

For all $ \mathbf{u},\mathbf{v},\Phi \in \mathbf{V}, \ \mathbf{B},\mathbf{w} \in
  \mathbf{W},\ q\in L_0^2(\Omega),\theta\in H_0^1(\Omega),$
we define the following  bilinear and trilinear forms:
\begin{eqnarray*}
\begin{aligned}
a(\mathbf{u},\mathbf{v})&:=\frac{1}{H_a^2}(\nabla\mathbf{u},\nabla\mathbf{v}),\ \ \ \ \ \ \ \ \ \ \ \
\ \
b(\mathbf{v},q):=(q,\nabla\cdot\mathbf{v}),\\
\tilde{a}(\mathbf{B},\mathbf{w})
&:=\frac{1}{R_m^2}(\nabla\times\mathbf{B},\nabla\times\mathbf{w}),\ \ \ \ \
\tilde{b}(\mathbf{w},\theta):=\frac{1}{R_m}(\nabla \theta,\mathbf{w}),\\
c(\Phi;\mathbf{u},\mathbf{v})&:=
\frac{1}{N}\big\{
\frac{1}{2}(\nabla\cdot(\Phi\otimes\mathbf{u}),\mathbf{v})-
\frac{1}{2}(\nabla\cdot(\Phi\otimes\mathbf{v}),\mathbf{u})
\big\},\\
\tilde{c}(\mathbf{v};\mathbf{B},\mathbf{w})&:=
\frac{1}{R_m}(\nabla\times \mathbf{w}, \mathbf{v}\times\mathbf{B}).
\end{aligned}\end{eqnarray*}
It is easy to see that $c(\Phi;\mathbf{v},\mathbf{v})=0$.

Then the weak form of the problem \eqref{mhd1*}-\eqref{mhd7*} reads:
find $\mathbf{u}\in \mathbf{V},\mathbf{B}\in \mathbf{W},p\in L_0^2(\Omega), r\in H_0^1(\Omega)$ such that
\begin{eqnarray}\label{weak1*}
&&a(\mathbf{u},\mathbf{v})+\tilde{a}(\mathbf{B},\mathbf{w})
+b(\mathbf{u},q)-b(\mathbf{v},p)
+\tilde{b}(\mathbf{w},r)-\tilde{b}(\mathbf{B},\theta)
+c(\mathbf{u};\mathbf{u},\mathbf{v})
+\tilde{c}(\mathbf{v};\mathbf{B},\mathbf{B})
-\tilde{c}(\mathbf{u};\mathbf{B},\mathbf{w})
\nonumber\\
&=&(\mathbf{f},\mathbf{v})+\frac{1}{R_m}(\mathbf{g},\mathbf{w}), \quad \forall \mathbf{v}\in \mathbf{V},\mathbf{w}\in \mathbf{W},q\in L_0^2(\Omega), \theta\in H_0^1(\Omega).
\end{eqnarray}

\begin{rem}
From  \cite[Corollary 2.18]{S2004}   for $d=3$, if $\mathbf{f} \in H^{-1}(\Omega)^3$ and $\mathbf{g} \in L^2(\Omega)^3$, then the weak problem (\ref{weak1*})  admits at least one solution, and under a certain  smallness condition the solution is unique. 
\end{rem}

\section{Weak Galerkin  finite element method}

\subsection{WG scheme}
To establish the WG method for the problem (\ref{mhd1*})-(\ref{mhd7*}),
We firstly introduce, for integer $m\geq 0$, the discrete weak gradient operator $ \nabla_{w,m} $ , the discrete weak divergence operator $ \nabla_{w,m}\cdotp $
and the discrete weak curl operator $ \nabla_{w,m}\times $ as follows:
\begin{myDef}
For any $ \mathrm{v}\in \mathrm{V}(K):=\big\{  \mathrm{v}=\{\mathrm{v}_{o},\mathrm{v}_{b}\}:\mathrm{v}_{o}\in L^{2}(K),$$ \mathrm{v}_{b}\in H^{1/2}(\partial K) \big\}  $
and $ K\in \mathcal{T}_{h} $, the discrete weak gradient, $\nabla_{w,m,K} \mathrm{v}\in[P_m(K)]^d$, of $ \mathrm{v}$ on $K$ is defined by
\begin{equation}\label{weak-gra}
(\nabla_{w,m,K}\mathrm{v},\bm{\phi})_{K}=-(\mathrm{v}_{0},\nabla\cdotp\bm{\phi})_{K}+\left\langle \mathrm{v}_{b},\bm{\phi}\cdotp\mathbf{n}_{K}\right\rangle_{\partial K},\ \ \ \forall\bm{\phi}\in [\mathcal{P}_{m}(K)]^d.
\end{equation}
Then  the global discrete weak gradient operator $ \nabla_{w,m} $ is defined by
\begin{equation*}
\nabla_{w,m}|_{K}:=\nabla_{w,m,K},\ \ \ \forall K\in \mathcal{T}_{h}.
\end{equation*}
Moreover, for a vector $ \mathbf{v}=(\mathrm{v}_1, ...,\mathrm{v}_d)^T,$
the discrete weak gradient $\nabla_{w,m}\mathbf{v}$ is defined by
\begin{equation*}
\nabla_{w,m}\mathbf{v}:=(\nabla_{w,m}\mathrm{v}_1,...,\nabla_{w,m}\mathrm{v}_d)^T.
\end{equation*}
\end{myDef}

\begin{myDef}
For any $ \bm{\mathrm{w}}\in\bm{\mathrm{W}}(K):=\big\{  \bm{\mathrm{w}}=\{\bm{\mathrm{w}}_{o},\bm{\mathrm{w}}_{b}\}:\bm{\mathrm{w}}_{o}\in\left[  L^{2}(K)\right] ^{d},$  $ \bm{\mathrm{w}}_{b}\cdotp\mathbf{n}_{K}\in H^{-1/2}(\partial K) \big\}  $ and $ K\in \mathcal{T}_{h} $, the discrete weak divergence, $\nabla_{w,m,K}\cdot \bm{\mathrm{w}}\in \mathcal{P}_m(K)$, of  $\bm{\mathrm{w}}$ on $K$ is defined by
\begin{equation*}
(\nabla_{w,m,K}\cdotp \bm{\mathrm{w}},\phi)_{K}=-(\bm{\mathrm{w}}_{o},\nabla\phi)_{K}+\left\langle \bm{\mathrm{w}}_{b}\cdotp\mathbf{n}_K,\phi\right\rangle_{\partial K},\ \ \ \forall\phi\in \mathcal{P}_{m}(K).
\end{equation*}
Then  the global discrete weak divergence operator $ \nabla_{w,m}\cdotp $ is defined by
\begin{equation*}
\nabla_{w,m}\cdotp|_{K}:=\nabla_{w,m,K}\cdotp,\ \ \ \forall K\in \mathcal{T}_{h}.
\end{equation*}
Moreover, for a tensor $ \mathbf{\hat{w}}=(\bm{\mathrm{w}}_1,...,\bm{\mathrm{w}}_d),
$
the discrete weak divergence   $\nabla_{w,m}\cdot\mathbf{\hat{w}}$ is defined by
\begin{equation*}
\nabla_{w,m}\cdot\mathbf{\hat{w}}:=(\nabla_{w,m}\cdot\bm{\mathrm{w}}_1,...,\nabla_{w,m}\cdot\bm{\mathrm{w}}_d)^T.
\end{equation*}
\end{myDef}

\begin{myDef}
For any $ \bm{w}\in \bm{\mathcal{W}}(K):=\{\bm{w}= \{\bm{w}_{o},\bm{w}_{b}\}:\bm{w}_{o}\in\left[  L^{2}(K)\right] ^{d},$  $  \bm{w}_{b}\times \mathbf{n}_K  \in \left[H^{-1/2}(\partial K)\right]^{2d-3} \}  $ and $ K\in \mathcal{T}_{h} $, the discrete weak curl $\nabla_{w,m,K}\times \bm{w}\in \left[\mathcal{P}_m(K)\right]^{2d-3}$ on $K$ is defined by
\begin{equation}\label{weak-curl}
(\nabla_{w,m,K}\times \bm{w},\phi)_{K}=(\bm{w}_{o},\nabla\times\phi)_{K}+\left\langle  \bm{w}_{b}\times \mathbf{n}_K,\phi\right\rangle_{\partial K},\ \ \ \forall\phi\in \left[\mathcal{P}_{m}(K)\right]^{2d-3},
\end{equation}
where

Then  the global discrete weak curl operator $ \nabla_{w,m}\times $ is defined by
\begin{equation*}
\nabla_{w,m}\times|_{K}:=\nabla_{w,m,K}\times,\ \ \ \forall K\in \mathcal{T}_{h}.
\end{equation*}

\end{myDef}


For any integer $ k\geq 1 $,
 we introduce the following finite dimensional spaces:
\begin{equation*}\begin{aligned}
&\mathbf{V}_h=\{\mathbf{v}_h=\{\mathbf{v}_{ho},\mathbf{v}_{hb}\}:
\mathbf{v}_{ho}|_K\in[\mathcal{P}_k(K)]^d,\mathbf{v}_{hb}|_e\in[\mathcal{P}_k(e)]^d,
\forall K\in \mathcal{T}_h, \forall e\in \varepsilon_h\},\\
&\mathbf{V}^0_h=\{\mathbf{v}_h=\{\mathbf{v}_{ho},\mathbf{v}_{hb}\}
\in\mathbf{V}_h; \mathbf{v}_{hb}|_{\partial\Omega}=0\},\\
&\mathbf{W}^0_h=\{\mathbf{w}_h=\{\mathbf{w}_{ho},\mathbf{w}_{hb}\}
\in\mathbf{V}_h;   \mathbf{w}_{hb}\times \mathbf{n}|_{\partial\Omega}=0\},\\
&Q_h=\{q_h=\{q_{ho},q_{hb}\}:q_{ho}|_K\in \mathcal{P}_{k-1}(K),
q_{hb}|_e\in \mathcal{P}_k(e),\forall K\in \mathcal{T}_h, \forall e\in \varepsilon_h\},\\
&Q_h^0=\{q_h=\{q_{ho},q_{hb}\}\in Q_h:q_{ho}\in L^2_0(\Omega)\},\\
&R_h^0=\{\theta_h=\{\theta_{ho},\theta_{hb}\}\in Q_h;{\theta}_{hb}|_{\partial\Omega}=0\}.
\end{aligned}\end{equation*}

We also define  the  following bilinear forms and trilinear terms:
\begin{equation*}\begin{aligned}
&a_{h}(\mathbf{u}_h,\mathbf{v}_h):=\frac{1}{H_a^2}(\nabla_{w,k-1}\mathbf{u}_h,\nabla_{w,k-1}\mathbf{v}_h)
+s_{h}(\mathbf{u}_h,\mathbf{v}_h),\\
&\quad s_h(\mathbf{u}_h,\mathbf{v}_h):=\frac{1}{H_a^2}\langle \tau (  \mathbf{u}_{ho}-\mathbf{u}_{hb}),
   \mathbf{v}_{ho}-\mathbf{v}_{hb}\rangle_{\partial\mathcal{T}_h},\\
&\tilde{a}_{h}(\mathbf{B}_h,\mathbf{w}_h)
:=\frac{1}{R_m^2}(\nabla_{w,k-1}\times\mathbf{B}_h,\nabla_{w,k-1}\times\mathbf{w}_h)
+\tilde{s}_h(\mathbf{B}_h,\mathbf{w}_h),\\
&\quad \tilde{s}_h(\mathbf{B}_h,\mathbf{w}_h)
:=\frac{1}{R_m^2}\langle \tau (   \mathbf{B}_{ho}-\mathbf{B}_{hb})\times \mathbf{n},
 ( \mathbf{w}_{ho}-\mathbf{w}_{hb})\times \mathbf{n} \rangle_{\partial\mathcal{T}_h},\\
&b_{h}(\mathbf{v}_h,q_h):=(\nabla_{w,k}q_h,\mathbf{v}_{ho}), \ \ \ \ \ \
\tilde{b}_{h}(\mathbf{w}_h,\theta_h):=\frac{1}{R_m}(\nabla_{w,k}\theta_h,\mathbf{w}_{ho}), \\
&c_{h}(\Phi_h;\mathbf{u}_h,\mathbf{v}_h):=
\frac{1}{2N}\big( (\nabla_{w,k}\cdot\{\mathbf{u}_{ho}\otimes\Phi_{ho},\mathbf{u}_{hb}\otimes\Phi_{hb}\},\mathbf{v}_{ho})-
 (\nabla_{w,k}\cdot\{\mathbf{v}_{ho}\otimes\Phi_{ho},\mathbf{v}_{hb}\otimes\Phi_{hb}\},\mathbf{u}_{ho})\big),\\
 &\tilde{c}_{h}(\mathbf{v}_h;\mathbf{B}_h,\mathbf{w}_h):=
\frac{1}{R_m}(\nabla_{w,k}\times\mathbf{w}_h,\mathbf{v}_{ho}\times\mathbf{B}_{ho}),
\end{aligned}\end{equation*}
where
\begin{align*}
& \mathbf{u}_h=\{\mathbf{u}_{ho},\mathbf{u}_{hb}\}, \mathbf{v}_h=\{\mathbf{v}_{ho},\mathbf{v}_{hb}\},
\Phi_h=\{\Phi_{ho},\Phi_{hb}\}\in\mathbf{V}_h^0,\\
&
 \mathbf{B}_h=\{\mathbf{B}_{ho},\mathbf{B}_{hb}\},
\mathbf{w}_h=\{\mathbf{w}_{ho},\mathbf{w}_{hb}\}\in\mathbf{W}_h^0,\\
&q_h=\{q_{ho},q_{hb}\}\in Q_h^0, \quad
\theta_h=\{\theta_{ho},\theta_{hb}\}\in R_h^0,
\end{align*}
and the stabilization parameter $\tau$ in $s_{h}(\cdot;\cdot,\cdot)$ and  $\tilde{s}_{h}(\cdot;\cdot,\cdot)$ is given by  $$\tau|_{\partial K}=h_K^{-1}, \quad \forall K\in \mathcal{T}_h.$$
We easily see that
\begin{equation}\label{ch=0}
c_{h}(\Phi_h;\mathbf{v}_h,\mathbf{v}_h)=0, \quad \forall \Phi_h, \ \mathbf{v}_h.
\end{equation}

With the above definitions, the WG scheme  for the problem (\ref{mhd1*})-(\ref{mhd7*}) reads as follows:
find
$ \mathbf{u}_h=\{\mathbf{u}_{ho},\mathbf{u}_{hb}\}\in\mathbf{V}_h^0$,
$ \mathbf{B}_h=\{\mathbf{B}_{ho},\mathbf{B}_{hb}\}\in\mathbf{W}_h^0$,
$p_h=\{p_{ho},p_{hb}\}\in Q_h^0$, $r_h=\{r_{ho},r_{hb}\}\in R_h^0$,
such that
\begin{align}\label{scheme01*}
&a_{h}(\mathbf{u}_h,\mathbf{v}_h)
+\tilde{a}_{h}(\mathbf{B}_h,\mathbf{w}_h)
+b_{h}(\mathbf{v}_h,p_h)-b_{h}(\mathbf{u}_h,q_h)
+\tilde{b}_{h}(\mathbf{w}_h,r_h)-\tilde{b}_{h}(\mathbf{B}_h,\theta_h)\nonumber\\
&\quad+c_{h}(\mathbf{u}_h;\mathbf{u}_h,\mathbf{v}_h)
+\tilde{c}_{h}(\mathbf{v}_h;\mathbf{B}_h,\mathbf{B}_h)
-\tilde{c}_{h}(\mathbf{u}_h;\mathbf{B}_h,\mathbf{w}_h)\nonumber\\
=&(\mathbf{f},\mathbf{v}_{ho})
+\frac{1}{R_m}(\mathbf{g},\mathbf{w}_{ho}), \quad \forall \mathbf{v}_h\in \mathbf{V}_h^0, \mathbf{w}_h\in \mathbf{W}_h^0 ,q_h\in Q_h^0,\theta_h\in   R_h^0.
\end{align}

The  existence and uniqueness of the discrete solution to this scheme will be discussed in next section.

Notice that the scheme  \eqref{scheme01*} is equivalent to the   following system: find
$ \mathbf{u}_h \in\mathbf{V}_h^0$,
$ \mathbf{B}_h \in\mathbf{W}_h^0$,
$p_h \in Q_h^0$, $r_h \in R_h^0$,
such that
\begin{subequations}\label{scheme0101*}
\begin{align}
a_{h}(\mathbf{u}_h,\mathbf{v}_h)
 +b_{h}(\mathbf{v}_h,p_h)
+c_{h}(\mathbf{u}_h;\mathbf{u}_h,\mathbf{v}_h)
+\tilde{c}_{h}(\mathbf{v}_h;\mathbf{B}_h,\mathbf{B}_h)
&=(\mathbf{f},\mathbf{v}_{ho}), \quad & \forall \mathbf{v}_h\in \mathbf{V}_h^0,\label{scheme0101*-a}\\
 b_{h}(\mathbf{u}_h,q_h)&=0
 , \quad & \forall  q_h\in Q_h^0,\label{scheme0101*-b}\\
\tilde{a}_{h}(\mathbf{B}_h,\mathbf{w}_h)
+\tilde{b}_{h}(\mathbf{w}_h,r_h)-\tilde{c}_{h}(\mathbf{u}_h;\mathbf{B}_h,\mathbf{w}_h)
&= \frac{1}{R_m}(\mathbf{g},\mathbf{w}_{ho}), \quad &\forall  \mathbf{w}_h\in \mathbf{W}_h^0 ,\label{scheme0101*-c}\\
 \tilde{b}_{h}(\mathbf{B}_h,\theta_h)&=0, \quad &\forall  \theta_h\in   R_h^0.\label{scheme0101*-d}
\end{align}
\end{subequations}
In what follows we shall show that the two relations \eqref{scheme0101*-b} and \eqref{scheme0101*-d} yield   globally divergence-free approximations of the velocity  and  the   magnetic  field, i.e.
\begin{align}\label{divfree-velocity}
 &\mathbf{u}_{ho}\in \mathbf{H}(\text{div},\Omega), \nabla\cdot\mathbf{u}_{ho}=0,
 \\
  \label{divfree-magnet}
& \mathbf{B}_{ho}\in \mathbf{H}(\text{div},\Omega), \nabla\cdot\mathbf{B}_{ho}=0.
 \end{align}
In fact, define a function $\varrho_{hb}\in L^2( \varepsilon_h)$ as follows: for any  $e \in \varepsilon_h$,
\begin{equation*}
\varrho_{hb}|_e =\left\{ \begin{array}{ll}
-\left((\mathbf{u}_{ho}\cdot\mathbf{n}_e)|_{K_1}\right)|_e
-\left((\mathbf{u}_{ho}\cdot\mathbf{n}_e)|_{K_2}\right)|_e,  & \text{if } e= {K_1}\cap  {K_2}, K_1,K_2\in \mathcal{T}_h,\\
0, & \text{if } e\subset\partial\Omega.
\end{array}
\right.
\end{equation*}
Setting $C_0:=\frac{1}{|\Omega|}\int_\Omega\nabla_h\cdot\mathbf{u}_{ho}d\mathbf{x}$ and taking   $q_h=
\{q_{ho},q_{hb}\}$ in \eqref{scheme0101*-b} with $q_{ho}=
\nabla_h\cdot\mathbf{u}_{ho}-C_0,q_{hb}=\varrho_{hb}-C_0$, we   obtain
\begin{equation*}\begin{aligned}
0&=-b_{h}(\mathbf{u}_h,q_h)=-(\nabla_{w,k}q_h,\mathbf{u}_{ho})\\
& =\left(\nabla_h \cdot \boldsymbol{u}_{h o}, q_{h o}\right)-\sum_{T \in \mathcal{T}_h}\left\langle\boldsymbol{u}_{h o} \cdot \boldsymbol{n}_T, q_{h b}\right\rangle_{\partial T} \\
& =\left(\nabla_h \cdot \boldsymbol{u}_{h o}, \nabla_h \cdot \boldsymbol{u}_{h o}-C_0\right)-\sum_{T \in \mathcal{T}_h}\left\langle\boldsymbol{u}_{h o} \cdot \boldsymbol{n}_T, \varrho_{h b}-C_0\right\rangle_{\partial T} \\
& =\left(\nabla_h \cdot \boldsymbol{u}_{h o}, \nabla_h \cdot \boldsymbol{u}_{h o}\right)-\sum_{T \in \mathcal{T}}\left\langle\boldsymbol{u}_{h o} \cdot \boldsymbol{n}_T, \varrho_{h b}\right\rangle_{\partial T}\\
&=\|\nabla_h\cdot\mathbf{u}_{ho}\|_0^2+\sum_{e\in \varepsilon_h, e\nsubseteq\partial\Omega}
\|(\mathbf{u}_{ho}\cdot\mathbf{n}_e)|_{K_1}
+(\mathbf{u}_{ho}\cdot\mathbf{n}_e)|_{K_2}\|_{0,e}^2.
\end{aligned}\end{equation*}
This gives $\mathbf{u}_{ho}\in \mathbf{H}(\text{div},\Omega)$ and $\nabla_h\cdot\mathbf{u}_{ho}=\nabla\cdot\mathbf{u}_{ho}=0$, i.e.
\eqref{divfree-velocity} holds.

Similarly, we can get \eqref{divfree-magnet}.

As a result, we have
the following conclusion.
\begin{myTheo}\label{div-free}
The scheme (\ref{scheme01*}) yields the globally divergence-free approximations of the velocity  and the   magnetic  field  in the sense that both
 \eqref{divfree-velocity}
  and
 \eqref{divfree-magnet} hold.

\end{myTheo}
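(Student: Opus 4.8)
The plan is to read both divergence-free properties off the discrete incompressibility constraints \eqref{scheme0101*-b} and \eqref{scheme0101*-d}, by testing each against a carefully engineered weak function whose interior part records the element-wise divergence and whose boundary part records the inter-element normal-flux jumps. The structural reason this works is a degree match: the forms $b_h$ and $\tilde b_h$ use the weak gradient $\nabla_{w,k}$ of degree exactly $k$, and $\mathbf{u}_{ho}|_K,\mathbf{B}_{ho}|_K\in[\mathcal{P}_k(K)]^d$, so $\mathbf{u}_{ho}$ (resp. $\mathbf{B}_{ho}$) is itself an admissible test vector $\bm{\phi}$ in \eqref{weak-gra}, making the associated integration-by-parts identity exact.

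For the velocity I would introduce the edge function $\varrho_{hb}$ exactly as in the display preceding the theorem, and take $q_h=\{q_{ho},q_{hb}\}\in Q_h^0$ with $q_{ho}=\nabla_h\cdot\mathbf{u}_{ho}-C_0$ and $q_{hb}=\varrho_{hb}-C_0$, where $C_0=\frac{1}{|\Omega|}\int_\Omega \nabla_h\cdot\mathbf{u}_{ho}\,d\mathbf{x}$; the shift by $C_0$ is forced by the requirement $q_{ho}\in L^2_0(\Omega)$ built into $Q_h^0$. Expanding $b_h(\mathbf{u}_h,q_h)=(\nabla_{w,k}q_h,\mathbf{u}_{ho})$ through \eqref{weak-gra} with $\bm{\phi}=\mathbf{u}_{ho}$ and invoking \eqref{scheme0101*-b}, the constraint becomes $0=(\nabla_h\cdot\mathbf{u}_{ho},q_{ho})-\sum_{K\in\mathcal{T}_h}\langle \mathbf{u}_{ho}\cdot\mathbf{n}_K,q_{hb}\rangle_{\partial K}$. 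I would then check that the two $C_0$ contributions cancel, using $\sum_{K}\int_{\partial K}\mathbf{u}_{ho}\cdot\mathbf{n}_K\,ds=\int_\Omega\nabla_h\cdot\mathbf{u}_{ho}\,d\mathbf{x}=C_0|\Omega|$, so that the identity collapses to the sum of squares $0=\|\nabla_h\cdot\mathbf{u}_{ho}\|_0^2+\sum_{e\nsubseteq\partial\Omega}\|(\mathbf{u}_{ho}\cdot\mathbf{n}_e)|_{K_1}+(\mathbf{u}_{ho}\cdot\mathbf{n}_e)|_{K_2}\|_{0,e}^2$.

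Since this is a sum of two nonnegative terms, both vanish: the first gives $\nabla_h\cdot\mathbf{u}_{ho}=0$ on every element, and the second says the normal component of $\mathbf{u}_{ho}$ has no jump across any interior edge. A piecewise polynomial vector field with square-integrable piecewise divergence and continuous normal traces belongs to $\mathbf{H}(\text{div},\Omega)$, and its distributional divergence then coincides with the element-wise one, yielding $\nabla\cdot\mathbf{u}_{ho}=0$ and hence \eqref{divfree-velocity}. For the magnetic field I would repeat the argument on \eqref{scheme0101*-d} with $\tilde b_h(\mathbf{B}_h,\theta_h)=\frac{1}{R_m}(\nabla_{w,k}\theta_h,\mathbf{B}_{ho})$, choosing $\theta_{ho}=\nabla_h\cdot\mathbf{B}_{ho}$ and $\theta_{hb}$ the analogue of $\varrho_{hb}$ formed from $\mathbf{B}_{ho}$; here the space $R_h^0$ only imposes $\theta_{hb}|_{\partial\Omega}=0$, which holds automatically because that analogue vanishes on boundary edges, so no mean-zero shift is needed, and the same sum-of-squares argument delivers \eqref{divfree-magnet}.

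The one place demanding genuine care is the construction of the test function, so that it (i) lies in the correct discrete space — the zero-mean condition of $Q_h^0$ forces the $C_0$ correction, and one must verify it drops out of the boundary sum, whereas $R_h^0$ instead requires the boundary part to vanish on $\partial\Omega$ — and (ii) is oriented so that the interior and boundary contributions assemble into a genuine sum of squares rather than an indefinite quantity. Once $\varrho_{hb}$ is chosen to cancel the two outward normal fluxes on each interior edge, the cross terms telescope into the squared normal jump and the rest is routine.
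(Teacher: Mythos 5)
Your proposal is correct and follows essentially the same route as the paper: test \eqref{scheme0101*-b} (resp.\ \eqref{scheme0101*-d}) with the pair built from the element-wise divergence and the edge function $\varrho_{hb}$ recording the two outward normal fluxes, shift by the mean $C_0$ to land in $Q_h^0$, and collapse the identity to a sum of squares. The only additions beyond the paper's argument are the explicit verification that the two $C_0$ contributions cancel and the remark that no mean-zero shift is needed for $R_h^0$ — both correct and consistent with the paper's "similarly" for the magnetic field.
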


We introduce spaces
\begin{align*}
\mathbf{\bar{V}}_h&:=\{\mathbf{v}_h\in\mathbf{V}_h^0: \ b_{h}(\mathbf{v}_h,q_h)=0,\forall q_h\in Q_h^0    \},
\\
\mathbf{\bar{W}}_h&:=\{\mathbf{w}_h\in\mathbf{W}_h^0: \ \tilde{b}_{h}(\mathbf{w}_h,\theta_h)=0,\forall \theta_h\in R_h^0    \}.
\end{align*}
Thus, the solution $(\mathbf{u}_h,\mathbf{B}_h)\in\mathbf{ V}_h^0\times\mathbf{W}_h^0$ of the scheme  (\ref{scheme01*}) also solves the following discretization problem: find
 $(\mathbf{u}_h,\mathbf{B}_h)\in\mathbf{\bar{V}}_h\times\mathbf{\bar{W}}_h$ such that
\begin{align}\label{3c1*}
&a_{h}(\mathbf{u}_h,\mathbf{v}_h)
+\tilde{a}_{h}(\mathbf{B}_h,\mathbf{w}_h)
+c_{h}(\mathbf{u}_h;\mathbf{u}_h,\mathbf{v}_h)
+\tilde{c}_{h}(\mathbf{v}_h;\mathbf{B}_h,\mathbf{B}_h)
-\tilde{c}_{h}(\mathbf{u}_h;\mathbf{B}_h,\mathbf{w}_h)
\nonumber\\
=&(\mathbf{f},\mathbf{v}_{ho})
+\frac{1}{R_m}(\mathbf{g},\mathbf{w}_{ho}),
\ \ \ \ \ \ \
\forall(\mathbf{v}_h,\mathbf{w}_h)\in\mathbf{\bar{V}}_h\times\mathbf{\bar{W}}_h,
\end{align}

\begin{rem}
It is easy to see that
\begin{align}\label{barVsubset}
\mathbf{\bar{V}}_h&\subset \{\mathbf{v}_h\in\mathbf{V}_h^0: \  \mathbf{v}_{ho}\in \mathbf{H}(\text{div},\Omega), \nabla\cdot\mathbf{v}_{ho}=0    \},\\
\label{barW=}
\mathbf{\bar{W}}_h&=\{\mathbf{w}_h\in\mathbf{W}_h^0: \ \mathbf{w}_{ho}\in \mathbf{H}(\text{div},\Omega), \nabla\cdot\mathbf{w}_{ho}=0  \}.
\end{align}
\end{rem}

To discuss  the existence and uniqueness of the discrete solution of the scheme (\ref{scheme01*}) and derive  error estimates, we will give some preliminary results in next subsection.

\subsection{Preliminary results}

In view of  the definitions of weak gradient and   curl operators, the Green's formula, the Cauchy-Schwarz inequality, the trace inequality and the inverse inequality, we can easily derive the following inequalities on $\mathbf{V}_h $.

\begin{myLem}\label{lemma3*}
Let $0\leq k-1\leq m  \leq k$.
For any  $ K\in\mathcal{T}_{h} $ and    $ \mathbf{v}_{h}=\{\mathbf{v}_{ho},\mathbf{v}_{hb}\}, \mathbf{w}_{h}=\{\mathbf{w}_{ho},\mathbf{w}_{hb}\} \in \mathbf{V}_h$,  there hold \begin{subequations}\label{lemma41*}
\begin{align}
\|\nabla \mathbf{v}_{ho}\|_{0,K}&\lesssim\|\nabla_{w,m}\mathbf{v}_{h}\|_{0,K}
+h_K^{-\frac{1}{2}}\|   \mathbf{v}_{ho}-\mathbf{v}_{hb}\|_{0,\partial K},
\label{lemma411*:sub1}\\
\|\nabla_{w,m}\mathbf{v}_{h}\|_{0,K}&\lesssim \|\nabla \mathbf{v}_{ho}\|_{0,K}+h_K^{-\frac{1}{2}}\|  \mathbf{v}_{ho}-\mathbf{v}_{hb}\|_{0,\partial K},
\label{lemma411*:sub2}\\
\|\nabla \times \mathbf{w}_{ho}\|_{0,K}&\lesssim\|\nabla_{w,m}\times\mathbf{w}_{h}\|_{0,K}+h_K^{-\frac{1}{2}}
\| (  \mathbf{w}_{ho}-\mathbf{w}_{hb}) \|_{0,\partial K},
\label{lemma41*:sub1}\\
\|\nabla_{w,m}\times\mathbf{w}_{h}\|_{0,K}&\lesssim \|\nabla \times \mathbf{w}_{ho}\|_{0,K}+h_K^{-\frac{1}{2}}\| ( \mathbf{w}_{ho}-\mathbf{w}_{hb})\times\mathbf{n} \|_{0,\partial K}.
\label{lemma41*:sub2}
\end{align}
\end{subequations}
\end{myLem}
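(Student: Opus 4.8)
The plan is to establish all four bounds element by element, reducing each to a single local identity that compares the weak operator with the classical one, the discrepancy being a boundary term in the jump $\mathbf{v}_{ho}-\mathbf{v}_{hb}$. For the gradient, fix $K$ and a scalar component $v$ of $\mathbf{v}_h$. Green's formula gives $(\nabla v_{ho},\boldsymbol{\phi})_K=-(v_{ho},\nabla\cdot\boldsymbol{\phi})_K+\langle v_{ho},\boldsymbol{\phi}\cdot\mathbf{n}_K\rangle_{\partial K}$ for every $\boldsymbol{\phi}$, whereas the definition \eqref{weak-gra} reads $(\nabla_{w,m,K}v,\boldsymbol{\phi})_K=-(v_{ho},\nabla\cdot\boldsymbol{\phi})_K+\langle v_{hb},\boldsymbol{\phi}\cdot\mathbf{n}_K\rangle_{\partial K}$ for all $\boldsymbol{\phi}\in[\mathcal{P}_m(K)]^d$. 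Subtracting yields
\[
(\nabla_{w,m,K}v-\nabla v_{ho},\boldsymbol{\phi})_K=\langle v_{hb}-v_{ho},\boldsymbol{\phi}\cdot\mathbf{n}_K\rangle_{\partial K},\qquad\forall\,\boldsymbol{\phi}\in[\mathcal{P}_m(K)]^d,
\]
the single identity that drives both gradient estimates.

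For \eqref{lemma411*:sub2} I would take $\boldsymbol{\phi}=\nabla_{w,m,K}v$, which is admissible since it lies in $[\mathcal{P}_m(K)]^d$, giving $\|\nabla_{w,m,K}v\|_{0,K}^2=(\nabla v_{ho},\nabla_{w,m,K}v)_K+\langle v_{hb}-v_{ho},\nabla_{w,m,K}v\cdot\mathbf{n}_K\rangle_{\partial K}$; bounding both terms by the Cauchy--Schwarz inequality, using $\|\nabla_{w,m,K}v\cdot\mathbf{n}_K\|_{0,\partial K}\le\|\nabla_{w,m,K}v\|_{0,\partial K}\lesssim h_K^{-1/2}\|\nabla_{w,m,K}v\|_{0,K}$ (the inverse trace inequality for polynomials on the shape-regular simplex $K$), and cancelling one factor of $\|\nabla_{w,m,K}v\|_{0,K}$ gives the claim. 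For \eqref{lemma411*:sub1} I would instead take $\boldsymbol{\phi}=\nabla v_{ho}$: this is legitimate precisely because $v_{ho}|_K\in\mathcal{P}_k(K)$ forces $\nabla v_{ho}\in[\mathcal{P}_{k-1}(K)]^d\subset[\mathcal{P}_m(K)]^d$ under the hypothesis $m\ge k-1$, and the same Cauchy--Schwarz/inverse-trace argument applied to $\|\nabla v_{ho}\|_{0,K}^2$ produces the bound. Summing over the $d$ components recovers the vector statements.

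The curl estimates \eqref{lemma41*:sub1}--\eqref{lemma41*:sub2} follow the identical template, with the appropriate Green's formula for the curl combined with \eqref{weak-curl} yielding, for all $\boldsymbol{\phi}\in[\mathcal{P}_m(K)]^{2d-3}$ and both $d=2,3$,
\[
(\nabla_{w,m,K}\times\mathbf{w}-\nabla\times\mathbf{w}_{ho},\boldsymbol{\phi})_K=\langle(\mathbf{w}_{hb}-\mathbf{w}_{ho})\times\mathbf{n}_K,\boldsymbol{\phi}\rangle_{\partial K}.
\]
Choosing $\boldsymbol{\phi}=\nabla_{w,m,K}\times\mathbf{w}$ and keeping the boundary factor as $\|(\mathbf{w}_{ho}-\mathbf{w}_{hb})\times\mathbf{n}_K\|_{0,\partial K}$ gives \eqref{lemma41*:sub2}, while choosing $\boldsymbol{\phi}=\nabla\times\mathbf{w}_{ho}$ (again admissible because $m\ge k-1$ places it in $[\mathcal{P}_m(K)]^{2d-3}$) and then using $\|(\mathbf{w}_{ho}-\mathbf{w}_{hb})\times\mathbf{n}_K\|_{0,\partial K}\le\|\mathbf{w}_{ho}-\mathbf{w}_{hb}\|_{0,\partial K}$ gives \eqref{lemma41*:sub1}. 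The one point that genuinely needs care is exactly this admissibility: the lower bound $m\ge k-1$ is what guarantees that the classical gradient $\nabla v_{ho}$ and curl $\nabla\times\mathbf{w}_{ho}$ of the degree-$k$ interior fields are legal test functions in \eqref{weak-gra} and \eqref{weak-curl}, so I would flag it explicitly; everything else is the routine combination of Cauchy--Schwarz with the standard trace and inverse inequalities on shape-regular elements.
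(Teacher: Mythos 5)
Your proof is correct and follows exactly the route the paper indicates (it gives no written proof, only the remark that the estimates follow from the definitions of the weak operators, Green's formula, Cauchy--Schwarz, and the trace and inverse inequalities): you derive the local identity comparing the weak and classical operators and test it with $\nabla_{w,m,K}v$ or $\nabla v_{ho}$ (resp. their curl analogues), which is the standard argument. Your explicit observation that the hypothesis $m\ge k-1$ is what makes $\nabla v_{ho}$ and $\nabla\times\mathbf{w}_{ho}$ admissible test functions is the right point to flag.
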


Introduce the following semi-norms  respectively on $ \mathbf{V}_{h}^{0}, \mathbf{W}_{h}^{0}$, $Q_h^0$
and $R^{0}_h$: 
\begin{equation*}
\begin{aligned}
&|||\mathbf{v}_h|||_V :=\left(\|\nabla_{w,k-1}\mathbf{v}_h\|_0^2+\|\tau^{\frac{1}{2}}
( \mathbf{v}_{ho}-\mathbf{v}_{hb})\|_{0,\partial\mathcal{T}_h}^2\right)^{1/2},\quad \forall \mathbf{v}_h \in \mathbf{V}_h^0,
\\
&|||\mathbf{w}_h|||_W:=\left(\|\nabla_{w,k-1}\times\mathbf{w}_h\|_0^2+\|\tau^{\frac{1}{2}}
( \mathbf{w}_{ho}-\mathbf{w}_{hb})\times \mathbf{n} \|_{0,\partial\mathcal{T}_h}^2\right)^{1/2}, \quad \forall
 \mathbf{w}_h\in \mathbf{W}_h^0,\\
&|||q_h|||_Q:=\left(\|q_{ho}\|_0^2+\sum_{K\in\mathcal{T}_h}h_K^2\|\nabla_{w,k}q_h\|_{0,K}^2\right)^{1/2},\quad \forall
 q_h\in Q_h^0,\\
&|||\theta_h|||_R:=\left(\|\theta_{ho}-\bar\theta_{ho}\|_0^2+
\sum_{K\in\mathcal{T}_h}h_K^2\|\nabla_{w,k}\theta_h\|_{0,K}^2\right)^{1/2}, \quad \forall  \theta_h\in  R_h^0,
\end{aligned}
\end{equation*}
where $\bar\theta_{ho}:=\frac{1}{|\Omega|}\int_\Omega\theta_{ho}  d\mathbf{x}$ denotes the mean value of $\theta_{ho}$, and we recall that $\tau|_{\partial K}=h_K^{-1}$.
It is  easy to see that $|||\cdot|||_V, |||\cdot|||_Q $ and $|||\cdot|||_R$
  are   norms  on $ \mathbf{V}_{h}^{0},   Q_h^0$
and $R^{0}_h$, respectively  (cf.  \cite{CFX2016}). As for the semi-norm $|||\cdot|||_W$, we have the following result. 

\begin{myLem}\label{lemma7w*}
 $|||\cdot|||_W$ is a norm on $\mathbf{\bar{W}}_h$.
 \end{myLem}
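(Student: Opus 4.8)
The plan is to show that the semi-norm $|||\cdot|||_W$ is positive definite on $\mathbf{\bar{W}}_h$, since it is clearly a semi-norm already; the only thing to verify is that $|||\mathbf{w}_h|||_W=0$ together with $\mathbf{w}_h\in\mathbf{\bar{W}}_h$ forces $\mathbf{w}_h=0$. So suppose $\mathbf{w}_h=\{\mathbf{w}_{ho},\mathbf{w}_{hb}\}\in\mathbf{\bar{W}}_h$ satisfies $|||\mathbf{w}_h|||_W=0$. By definition of the norm this gives both $\|\nabla_{w,k-1}\times\mathbf{w}_h\|_0=0$ and $\|\tau^{1/2}(\mathbf{w}_{ho}-\mathbf{w}_{hb})\times\mathbf{n}\|_{0,\partial\mathcal{T}_h}=0$, i.e.\ $\nabla_{w,k-1}\times\mathbf{w}_h=0$ on every element and the tangential jump $(\mathbf{w}_{ho}-\mathbf{w}_{hb})\times\mathbf{n}$ vanishes on all element boundaries.

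First I would feed these two facts into \cref{lemma41*:sub1} of \cref{lemma3*}, applied elementwise with $m=k-1$, to conclude $\|\nabla\times\mathbf{w}_{ho}\|_{0,K}=0$ for each $K$, hence $\nabla\times\mathbf{w}_{ho}=0$ on all of $\Omega$. Next I would exploit the membership $\mathbf{w}_h\in\mathbf{\bar{W}}_h$: by the characterization \eqref{barW=} in the preceding remark, this means $\mathbf{w}_{ho}\in\mathbf{H}(\mathrm{div},\Omega)$ with $\nabla\cdot\mathbf{w}_{ho}=0$. Combined with the boundary condition $\mathbf{w}_{hb}\times\mathbf{n}=0$ on $\partial\Omega$ built into $\mathbf{W}_h^0$ and the vanishing tangential jumps just established, the interior field $\mathbf{w}_{ho}$ has continuous tangential traces across interior faces and zero tangential trace on $\partial\Omega$, so $\mathbf{w}_{ho}\in\mathbf{H}_0(\mathrm{curl};\Omega)$.

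At this stage $\mathbf{w}_{ho}$ is a genuine field in $\mathbf{H}_0(\mathrm{curl};\Omega)\cap\mathbf{H}(\mathrm{div},\Omega)$ that is simultaneously curl-free and divergence-free. The plan is to invoke the standard result that on a domain $\Omega$ such a field must vanish: the space of fields with $\nabla\times\mathbf{v}=0$, $\nabla\cdot\mathbf{v}=0$, and $\mathbf{v}\times\mathbf{n}=0$ on $\partial\Omega$ is the space of harmonic (Neumann-type) vector fields, which is trivial when $\Omega$ is simply connected (and in any case finite-dimensional, reducing to cohomological obstructions that I would assume away). Hence $\mathbf{w}_{ho}=0$. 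Finally, with $\mathbf{w}_{ho}=0$ in hand, the vanishing of the tangential jump gives $\mathbf{w}_{hb}\times\mathbf{n}=0$ on every face; since $\mathbf{w}_{hb}$ lives on the faces and its tangential component is what enters, this forces $\mathbf{w}_{hb}=0$ as an element of the trace space, completing the proof that $\mathbf{w}_h=0$.

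The main obstacle I expect is the third step: establishing $\mathbf{w}_{ho}=0$ from the curl-free/div-free/tangential-boundary conditions genuinely requires a topological hypothesis on $\Omega$ (simple connectedness, or more precisely the vanishing of the relevant de Rham cohomology), and the paper has only assumed $\Omega$ is a polygon/polyhedron. I would flag this explicitly and either assume $\Omega$ is simply connected or cite the Hodge-decomposition/compact-embedding machinery for $\mathbf{H}_0(\mathrm{curl})\cap\mathbf{H}(\mathrm{div})$; the elementwise norm-equivalence step via \cref{lemma3*} and the trace-space bookkeeping are routine by comparison.
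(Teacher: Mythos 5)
Your proof is correct and follows essentially the same route as the paper's: use the vanishing of the weak curl and of the tangential jumps, together with the divergence-free characterization of $\mathbf{\bar{W}}_h$ in \eqref{barW=}, to reduce to a field that is globally curl-free, divergence-free, and has zero tangential trace on $\partial\Omega$, and then conclude that this field vanishes. The only real difference is how that last step is justified: the paper writes $\mathbf{w}_{ho}=\nabla\varphi$ and argues via harmonicity of $\varphi$ with constant boundary values, which needs exactly the topological hypotheses (simple connectedness, connected boundary) that you flag, so your caveat applies equally to the paper's own argument; likewise, the paper makes the same final deduction ``$\mathbf{w}_{hb}\times\mathbf{n}=0$ on every $\partial K$ hence $\mathbf{w}_{hb}=0$'' that you do, with the same implicit identification of the face unknown with its tangential part.
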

\begin{proof}
For any $\mathbf{w}_h\in \mathbf{\bar{W}}_h$, it suffices to show that $|||\mathbf{w}_h|||_W=0$ leads to $\mathbf{w}_h=0$. From the definition of $|||\cdot|||_W$ and the estimate \eqref{lemma41*:sub1}, we immediately get
$$\nabla\times\mathbf{w}_{ho}|_K=0, \quad    ( \mathbf{w}_{ho}-\mathbf{w}_{hb})\times \mathbf{n}|_{\partial K}=0, \quad  \forall K\in\mathcal{T}_h.$$
Hence we have
$$\mathbf{w}_{ho}\in \mathbf{H}(\text{curl};\Omega), \quad \nabla\times\mathbf{w}_{ho} =0, \ \text{and }
 \mathbf{w}_{ho}\times\mathbf{n}|_{\partial\Omega}=\mathbf{w}_{hb}\times\mathbf{n}|_{\partial\Omega}=0.$$
Then  there exists a potential function $\varphi$ such that
$\mathbf{w}_{ho}=\nabla\varphi$ in $\Omega$.

 On the other hand, from  \eqref{barW=} we also have
$$\mathbf{w}_{ho}\in \mathbf{H}(\text{div},\Omega), \quad  \nabla\cdot\mathbf{w}_{ho}|_\Omega=0. $$
As a result, we obtain $\Delta\varphi=0$  in $\Omega$.  As  the boundary condition $\nabla\varphi\times\mathbf{n}|_{\partial\Omega}=\mathbf{w}_{ho}\times\mathbf{n}|_{\partial\Omega}=0$   implies that $ \varphi$ is a constant on $\partial\Omega$, we   know that $ \varphi$ is a constant on $ \Omega$, which means that  $\mathbf{w}_{ho}=\nabla\varphi=0$. Finally, from the relation $( \mathbf{w}_{ho}-\mathbf{w}_{hb})\times \mathbf{n}|_{\partial K}=0$ for any  $ K\in\mathcal{T}_h$ it follows that $\mathbf{w}_{hb}=0$. This finishes the proof.
\end{proof}

\begin{myLem}\label{lemma7*}
There hold
\begin{subequations}\label{lemma42*}
\begin{align}
&\|\nabla_{h}\mathbf{v}_{ho}\|_{0}\lesssim |||\mathbf{v}_{h}|||_V,\ \ \forall \mathbf{v}_{h}\in \mathbf{V}^{0}_{h},\\
&\|\nabla_{h}\times \mathbf{w}_{ho}\|_{0}\lesssim |||\mathbf{w}_{h}|||_W,\ \ \forall \mathbf{w}_{h}\in \mathbf{W}^{0}_{h},
\label{lemma42*:sub2}
\end{align}
\end{subequations}
and
\begin{subequations}\label{lemma7*:3}
\begin{align}
&\|\mathbf{v}_{ho}\|_{0,q}\lesssim |||\mathbf{v}_{h}|||_V,\quad \forall \mathbf{v}_{h}\in \mathbf{V}^{0}_{h},
\end{align}
\end{subequations}
for $ 2\leq q<\infty $ when $d=2$, and for $ 2\leq q\leq 6 $ when $d=3$.

\end{myLem}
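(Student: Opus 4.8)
The plan is to prove the three estimates in turn, the first two being direct consequences of the local bounds in Lemma~\ref{lemma3*} and the third, a discrete Sobolev inequality, carrying the real content.

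For the first estimate in \eqref{lemma42*} I would take $m=k-1$ in \eqref{lemma411*:sub1}, square, and sum over $K\in\mathcal{T}_h$; since $\tau|_{\partial K}=h_K^{-1}$, the boundary terms assemble into $\|\tau^{1/2}(\mathbf{v}_{ho}-\mathbf{v}_{hb})\|_{0,\partial\mathcal{T}_h}^2$ and the interior terms into $\|\nabla_{w,k-1}\mathbf{v}_h\|_0^2$, whose sum is exactly $|||\mathbf{v}_h|||_V^2$. For \eqref{lemma42*:sub2} I would argue identically from the integration-by-parts identity underlying \eqref{lemma41*:sub1}, which controls $\|\nabla\times\mathbf{w}_{ho}\|_{0,K}$ by $\|\nabla_{w,k-1}\times\mathbf{w}_h\|_{0,K}$ together with the tangential jump $h_K^{-1/2}\|(\mathbf{w}_{ho}-\mathbf{w}_{hb})\times\mathbf{n}\|_{0,\partial K}$; summation then reproduces $|||\mathbf{w}_h|||_W^2$.

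The estimate \eqref{lemma7*:3} is the crux. Because $\mathbf{v}_{ho}$ is merely piecewise polynomial, the continuous embedding $[H_0^1(\Omega)]^d\hookrightarrow[L^q(\Omega)]^d$ cannot be applied directly, so I would first pass to a conforming companion. Concretely, I would build an Oswald-type averaging interpolant $\mathbf{v}_{ho}^c\in[H_0^1(\Omega)]^d$ by assigning to each Lagrange node the average of the element values of $\mathbf{v}_{ho}$ there (and the value $0$ on $\partial\Omega$). Shape regularity yields the standard bound $\sum_{K}h_K^{-2}\|\mathbf{v}_{ho}-\mathbf{v}_{ho}^c\|_{0,K}^2+\|\nabla_h(\mathbf{v}_{ho}-\mathbf{v}_{ho}^c)\|_0^2\lesssim\sum_{e\in\varepsilon_h}h_e^{-1}\|\llbracket\mathbf{v}_{ho}\rrbracket\|_{0,e}^2$, in which the jump across an interior edge $e=K_1\cap K_2$ is split as $\llbracket\mathbf{v}_{ho}\rrbracket=(\mathbf{v}_{ho}|_{K_1}-\mathbf{v}_{hb})-(\mathbf{v}_{ho}|_{K_2}-\mathbf{v}_{hb})$ and equals $\mathbf{v}_{ho}$ on boundary edges (where $\mathbf{v}_{hb}=0$); hence the right-hand side is controlled by $\|\tau^{1/2}(\mathbf{v}_{ho}-\mathbf{v}_{hb})\|_{0,\partial\mathcal{T}_h}^2\le|||\mathbf{v}_h|||_V^2$.

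With this interpolant I would split $\|\mathbf{v}_{ho}\|_{0,q}\le\|\mathbf{v}_{ho}^c\|_{0,q}+\|\mathbf{v}_{ho}-\mathbf{v}_{ho}^c\|_{0,q}$. For the conforming part, the Sobolev embedding (valid for $q<\infty$ if $d=2$ and $q\le 6$ if $d=3$) and Poincar\'e's inequality give $\|\mathbf{v}_{ho}^c\|_{0,q}\lesssim\|\nabla\mathbf{v}_{ho}^c\|_0\le\|\nabla_h\mathbf{v}_{ho}\|_0+\|\nabla_h(\mathbf{v}_{ho}-\mathbf{v}_{ho}^c)\|_0\lesssim|||\mathbf{v}_h|||_V$, using the first estimate of this lemma and the interpolation bound. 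For the remainder I would use the elementwise inverse inequality $\|\cdot\|_{0,q,K}\lesssim h_K^{d/q-d/2}\|\cdot\|_{0,K}$ together with $\|\mathbf{v}_{ho}-\mathbf{v}_{ho}^c\|_{0,K}\lesssim h_K a_K$ (where $a_K^2:=\sum_{e\subset\partial K}h_e^{-1}\|\llbracket\mathbf{v}_{ho}\rrbracket\|_{0,e}^2$), so that $\|\mathbf{v}_{ho}-\mathbf{v}_{ho}^c\|_{0,q,K}\lesssim h_K^{1+d/q-d/2}a_K$; since $q\ge 2$ gives $\|\cdot\|_{\ell^q}\le\|\cdot\|_{\ell^2}$ over the elements and the precise $q$-range makes the exponent $1+d/q-d/2$ nonnegative, the powers of $h_K$ stay bounded and summation yields $\|\mathbf{v}_{ho}-\mathbf{v}_{ho}^c\|_{0,q}\lesssim(\sum_K a_K^2)^{1/2}\lesssim|||\mathbf{v}_h|||_V$. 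The main obstacle is precisely this last exponent bookkeeping: the admissible range of $q$ is forced by requiring $1+d/q-d/2\ge0$, which is where the endpoint $q=6$ in three dimensions is recovered (the exponent then vanishing), and the averaging-interpolant estimates must be assembled carefully so that the jumps collapse onto $|||\cdot|||_V$.
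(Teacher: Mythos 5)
Your proof is correct. For the two estimates in \eqref{lemma42*} you do exactly what the paper does: take $m=k-1$ in Lemma \ref{lemma3*}, square, and sum over the elements (the paper simply says they ``follow from Lemma \ref{lemma3*} directly''). One small point in your favor: for \eqref{lemma42*:sub2} you correctly work with the tangential jump $h_K^{-1/2}\|(\mathbf{w}_{ho}-\mathbf{w}_{hb})\times\mathbf{n}\|_{0,\partial K}$, which is what actually comes out of the integration-by-parts identity for the weak curl and what is needed to match $|||\cdot|||_W$; the paper's displayed \eqref{lemma41*:sub1} omits the $\times\,\mathbf{n}$, apparently a typo. For the discrete Sobolev inequality \eqref{lemma7*:3} the paper gives no argument at all and just cites \cite[Lemma 3.5]{HX2019}; your Oswald-averaging construction --- conforming companion $\mathbf{v}_{ho}^c$, Karakashian--Pascal bound of the defect by the face jumps, splitting of the jumps through $\mathbf{v}_{hb}$ so that they collapse onto $\|\tau^{1/2}(\mathbf{v}_{ho}-\mathbf{v}_{hb})\|_{0,\partial\mathcal{T}_h}$, Sobolev embedding for the conforming part, and inverse estimates with the exponent $1+d/q-d/2\ge 0$ fixing the admissible range of $q$ --- is the standard self-contained proof of that cited result, and your exponent bookkeeping correctly recovers the endpoint $q=6$ when $d=3$. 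So the content is the same; you have simply supplied the details that the paper outsources to a reference.
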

\begin{proof}
The first two inequalities follow from   Lemma \ref{lemma3*} directly and the third one comes from \cite[Lemma 3.5]{HX2019}.
\end{proof}

We  introduce the following mesh-dependent inner products and norms:
\begin{eqnarray*}
(u,v)_{\mathcal{T}_h}:=\sum_{K\in \mathcal{T}_h}(u,v)_K,\ \ \ \ \ \ \ \|u\|_{0,\mathcal{T}_h}:=\left(\sum_{K\in \mathcal{T}_h}\|u\|^2_{0,K}\right)^{1/2},\ \\
\langle u,v\rangle_{\partial\mathcal{T}_h}:=\sum_{K\in \mathcal{T}_h}\langle u,v\rangle_{\partial K},\ \ \ \ \|u\|_{0,\partial\mathcal{T}_h}:=\left(\sum_{K\in \mathcal{T}_h}\|u\|^2_{0,\partial K}\right)^{1/2}.
\end{eqnarray*}

\begin{myLem}\label{lemma12*}
There holds
\begin{eqnarray}\label{woL03}
\|\mathbf{w}_{ho}\|_{0,3,\Omega}\lesssim  |||\mathbf{w}_{h}|||_W, \quad \forall \mathbf{w}_h=\{\mathbf{w}_{ho},\mathbf{w}_{hb}\}\in \mathbf{\bar W}_h.
\end{eqnarray}
\end{myLem}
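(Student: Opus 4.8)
The plan is to establish a discrete counterpart of the continuous embedding $\mathbf{H}_0(\text{curl};\Omega)\cap\mathbf{H}(\text{div},\Omega)\hookrightarrow [L^3(\Omega)]^d$. Two facts are available at the outset. First, by \eqref{barW=}, every $\mathbf{w}_h\in\mathbf{\bar W}_h$ has $\mathbf{w}_{ho}\in\mathbf{H}(\text{div},\Omega)$ with $\nabla\cdot\mathbf{w}_{ho}=0$; second, by \eqref{lemma42*:sub2}, $\|\nabla_h\times\mathbf{w}_{ho}\|_0\lesssim|||\mathbf{w}_h|||_W$. Thus $\mathbf{w}_{ho}$ fails to lie in $\mathbf{H}_0(\text{curl};\Omega)$ only through its tangential jumps across interior faces, and these are exactly what the second term of $|||\cdot|||_W$ controls: writing the tangential jump $[\mathbf{w}_{ho}\times\mathbf{n}]$ on an interior face $e=K_1\cap K_2$ as the difference of the two traces of $(\mathbf{w}_{ho}-\mathbf{w}_{hb})\times\mathbf{n}$ and summing, one gets $\sum_{e}h_e^{-1}\|[\mathbf{w}_{ho}\times\mathbf{n}]\|_{0,e}^2\lesssim\sum_{K}\|\tau^{1/2}(\mathbf{w}_{ho}-\mathbf{w}_{hb})\times\mathbf{n}\|_{0,\partial K}^2\le|||\mathbf{w}_h|||_W^2$, using $\tau|_{\partial K}=h_K^{-1}$.

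First I would construct a conforming reconstruction $\mathbf{w}^c\in\mathbf{H}_0(\text{curl};\Omega)$ of $\mathbf{w}_{ho}$ by a standard nodal-averaging (Oswald-type) operator into the degree-$k$ N\'ed\'elec space, which satisfies the local estimates $h_K^{-2}\|\mathbf{w}_{ho}-\mathbf{w}^c\|_{0,K}^2+\|\nabla\times(\mathbf{w}_{ho}-\mathbf{w}^c)\|_{0,K}^2\lesssim\sum_{e\subset\partial K}h_e^{-1}\|[\mathbf{w}_{ho}\times\mathbf{n}]\|_{0,e}^2$. Summing and invoking the jump bound above yields $\|\nabla\times\mathbf{w}^c\|_0\lesssim|||\mathbf{w}_h|||_W$. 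For the remainder $\boldsymbol\eta:=\mathbf{w}_{ho}-\mathbf{w}^c$, which is piecewise polynomial, an inverse inequality upgrades the $L^2$ control to $L^3$: from $\|\boldsymbol\eta\|_{0,3,K}\lesssim h_K^{-d/6}\|\boldsymbol\eta\|_{0,K}$ and $\|\boldsymbol\eta\|_{0,K}^2\lesssim\sum_{e\subset\partial K}h_e\|[\mathbf{w}_{ho}\times\mathbf{n}]\|_{0,e}^2$, and since $\|\cdot\|_{\ell^3}\le\|\cdot\|_{\ell^2}$ over the elements, the powers of $h_K$ combine (the exponent $1-d/3$ is nonnegative for both $d=2,3$) to give $\|\boldsymbol\eta\|_{0,3,\Omega}\lesssim|||\mathbf{w}_h|||_W$.

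It then remains to bound $\|\mathbf{w}^c\|_{0,3}$. Since $\mathbf{w}^c$ need not be divergence-free, I would split off its gradient part by the Helmholtz decomposition $\mathbf{w}^c=\mathbf{z}+\nabla\psi$ with $\psi\in H_0^1(\Omega)$ defined by $(\nabla\psi,\nabla\phi)=(\mathbf{w}^c,\nabla\phi)$ for all $\phi\in H_0^1(\Omega)$. Then $\nabla\psi\in\mathbf{H}_0(\text{curl};\Omega)$, while $\mathbf{z}\in\mathbf{H}_0(\text{curl};\Omega)\cap\mathbf{H}(\text{div},\Omega)$ is divergence-free with $\nabla\times\mathbf{z}=\nabla\times\mathbf{w}^c$; the continuous embedding together with the Friedrichs inequality (valid on the simply connected Lipschitz polyhedron $\Omega$, through the $\mathbf{H}^{1/2}$-regularity of such fields and Sobolev embedding) gives $\|\mathbf{z}\|_{0,3}\lesssim\|\nabla\times\mathbf{z}\|_0=\|\nabla\times\mathbf{w}^c\|_0\lesssim|||\mathbf{w}_h|||_W$. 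For the gradient part, using $\nabla\cdot\mathbf{w}_{ho}=0$ and $\phi|_{\partial\Omega}=0$ one checks $(\nabla\psi,\nabla\phi)=(\mathbf{w}^c-\mathbf{w}_{ho},\nabla\phi)=-(\boldsymbol\eta,\nabla\phi)$, so $\nabla\psi$ is the Helmholtz projection of $-\boldsymbol\eta$, whence $\|\nabla\psi\|_{0,3}\lesssim\|\boldsymbol\eta\|_{0,3}\lesssim|||\mathbf{w}_h|||_W$. Collecting $\|\mathbf{w}_{ho}\|_{0,3}\le\|\mathbf{z}\|_{0,3}+\|\nabla\psi\|_{0,3}+\|\boldsymbol\eta\|_{0,3}$ finishes the argument.

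\emph{The main obstacle} is the analytical core, namely the embedding $\mathbf{H}_0(\text{curl})\cap\mathbf{H}(\text{div})\hookrightarrow [L^3]^d$ and the $L^3$-boundedness of the Helmholtz (gradient) projection: both hold on Lipschitz polyhedra, but the admissible exponent range is domain-dependent, which is precisely why $q=3$, rather than a larger exponent, is the natural target here. The remaining delicate point is the bookkeeping of the $h$-powers in the inverse-estimate step so that the final constant is $h$-independent; I expect this to go through cleanly in both dimensions as indicated, and it is the place where the choice $q=3$ and the shape-regularity of $\mathcal{T}_h$ are genuinely used.
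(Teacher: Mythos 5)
Your argument is correct, but it takes a genuinely different (much longer, self-contained) route than the paper. The paper's proof is essentially a two-step citation: it first verifies from \eqref{barW=} and Green's formula that $(\mathbf{w}_{ho},\nabla\phi)_{\mathcal{T}_h}=0$ for all $\phi\in H_0^1(\Omega)$, then invokes the discrete Sobolev embedding of \cite[Theorem 3.1]{QS2020} to get $\|\mathbf{w}_{ho}\|_{0,3,\Omega}\lesssim\|\nabla_h\times\mathbf{w}_{ho}\|_{0,\mathcal{T}_h}+\|\tau^{1/2}[\mathbf{w}_{ho}]\|_{0,\partial\mathcal{T}_h}$, rewrites the jump as $[\mathbf{w}_{ho}-\mathbf{w}_{hb}]$ (using that $\mathbf{w}_{hb}$ is single-valued), bounds it by the tangential-jump term of $|||\cdot|||_W$ (legitimate because $\mathbf{w}_{ho}\in\mathbf{H}(\mathrm{div},\Omega)$ kills the normal jumps), and finishes with \eqref{lemma42*:sub2}. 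What you have written is, in effect, a proof of that cited theorem: N\'ed\'elec--Oswald averaging to produce a conforming $\mathbf{w}^c$, inverse estimates to upgrade the nonconformity error to $L^3$, the embedding $\mathbf{H}_0(\mathrm{curl})\cap\mathbf{H}(\mathrm{div})\hookrightarrow[L^3]^d$ via the Friedrichs inequality, and the $L^3$-boundedness of the Helmholtz projection to absorb the gradient part. Your route buys transparency --- it makes explicit exactly where the exponent $3$, the divergence-free constraint, and shape regularity enter, and where the domain restrictions (simple connectivity, Lipschitz regularity for the $W^{1,3}$ elliptic estimate) are needed --- at the cost of importing three nontrivial auxiliary facts that would themselves require careful citation; the paper's route packages all of that into one reference. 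Two small points of hygiene if you were to write yours out in full: the local averaging estimate should be stated over the faces of a patch $\omega_K$ rather than of $\partial K$ alone (harmless after summation), and you should note explicitly, as the paper implicitly does, that the vanishing of the normal jumps of $\mathbf{w}_{ho}$ is what permits controlling everything by the tangential stabilization term only.
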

\begin{proof} By \eqref{barW=} 
and  the Green's formula  we   get
 \begin{eqnarray*}\label{woL03-1}
(\mathbf{w}_{ho},\nabla \phi)_{\mathcal{T}_h}=0, \ \ \forall\phi\in H_0^1(\Omega). 
\end{eqnarray*}
 Then, form \cite[Theorem 3.1]{QS2020} we have
\begin{align*}
\|\mathbf{w}_{ho}\|_{0,3,\Omega}
&\lesssim
\|\nabla_h\times\mathbf{w}_{ho}\|_{0,\mathcal{T}_h}+\|\tau^{\frac{1}{2}}
[\mathbf{w}_{ho}]\|_{0, \partial\mathcal{T}_h}\\
&=
\|\nabla_h\times\mathbf{w}_{ho}\|_{0,\mathcal{T}_h}+\|\tau^{\frac{1}{2}}
[\mathbf{w}_{ho}-\mathbf{w}_{hb}]\|_{0 ,\partial\mathcal{T}_h}\\
&\lesssim
\|\nabla_h\times\mathbf{w}_{ho}\|_{0,\mathcal{T}_h}+\|\tau^{\frac{1}{2}}
(\mathbf{w}_{ho}-\mathbf{w}_{hb})\times \mathbf{n}\|_{0, \partial\mathcal{T}_h}.
\end{align*}
This estimate plus \eqref{lemma42*:sub2} yields (\ref{woL03}).
\end{proof}

In light of the trace theorem, the inverse inequality and scaling arguments, we can get the following lemma  (cf. \cite{Shi;Wang2013,HX2019}).

\begin{myLem}\label{lemma2*}
For all $ K\in\mathcal{T}_{h}, \varphi\in H^{1}(K) $, and $ 1\leq q\leq \infty $, there holds
\begin{equation*}
\|\varphi\|_{0,q,\partial K}\lesssim h_K^{-\frac{1}{q}}\|\varphi\|_{0,q,K}+h_K^{1-\frac{1}{q}}|\varphi|_{1,q,K}.
\end{equation*}
In particular, for all $ \varphi\in \mathcal{P}_{k}(K) $,
\begin{equation*}
\|\varphi\|_{0,q,\partial K}\lesssim h_K^{-\frac{1}{q}}\|\varphi\|_{0,q,K}.
\end{equation*}
\end{myLem}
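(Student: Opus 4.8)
The plan is to establish the general inequality by the standard scaling (reference-element) argument, and then to deduce the polynomial case from it by an inverse estimate. Since every constant in the paper's $\lesssim$ convention is only required to be independent of the mesh size $h$, it suffices to reduce everything to a fixed reference simplex.

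First I would fix a reference simplex $\hat K$ and invoke the continuous trace embedding $W^{1,q}(\hat K)\hookrightarrow L^q(\partial\hat K)$, which yields a constant depending only on $\hat K$ and $q$ (but not on $h$) such that
$$\|\hat\varphi\|_{0,q,\partial\hat K}\lesssim \|\hat\varphi\|_{0,q,\hat K}+|\hat\varphi|_{1,q,\hat K},\qquad \forall\,\hat\varphi\in W^{1,q}(\hat K).$$
Next I would introduce the affine map $F_K(\hat{\mathbf x})=B_K\hat{\mathbf x}+\mathbf b_K$ sending $\hat K$ onto $K$, and set $\hat\varphi:=\varphi\circ F_K$. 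The shape regularity of $\mathcal T_h$ gives $\|B_K\|\sim h_K$, $\|B_K^{-1}\|\sim h_K^{-1}$ and $|\det B_K|\sim h_K^{d}$, with constants depending only on the regularity parameter.

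The core step is to track the three scalings induced by $F_K$: from $|\det B_K|\sim h_K^{d}$ one gets $\|\varphi\|_{0,q,K}\sim h_K^{d/q}\|\hat\varphi\|_{0,q,\hat K}$; from the surface-measure scaling $\sim h_K^{d-1}$ one gets $\|\varphi\|_{0,q,\partial K}\sim h_K^{(d-1)/q}\|\hat\varphi\|_{0,q,\partial\hat K}$; and, using the chain rule $\nabla_{\hat{\mathbf x}}\hat\varphi=B_K^{T}\nabla_{\mathbf x}\varphi$ together with the bounds on $B_K$, one gets $|\varphi|_{1,q,K}\sim h_K^{d/q-1}|\hat\varphi|_{1,q,\hat K}$. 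Substituting these three relations into the reference trace inequality and multiplying through by $h_K^{(d-1)/q}$ produces exactly
$$\|\varphi\|_{0,q,\partial K}\lesssim h_K^{-1/q}\|\varphi\|_{0,q,K}+h_K^{1-1/q}|\varphi|_{1,q,K},$$
which is the first assertion; for $q=\infty$ the Jacobian factors cancel and the same computation goes through with $1/q$ read as $0$. Finally, for $\varphi\in\mathcal P_k(K)$ I would invoke the inverse inequality $|\varphi|_{1,q,K}\lesssim h_K^{-1}\|\varphi\|_{0,q,K}$; inserting it into the first estimate turns the second term into $h_K^{1-1/q}|\varphi|_{1,q,K}\lesssim h_K^{-1/q}\|\varphi\|_{0,q,K}$, which is absorbed into the first, yielding the polynomial trace bound.

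The only delicate point is the consistent book-keeping of the $q$-dependent exponents across the full range $1\le q\le\infty$, together with the verification that each hidden constant—the reference trace constant, the inverse-inequality constant, and the three scaling constants—is independent of $h_K$; this last property is precisely what shape regularity of $\mathcal T_h$ supplies. There is no genuine analytic obstacle: the statement is a routine scaled trace inequality, and the argument above is essentially the one referenced in \cite{Shi;Wang2013,HX2019}.
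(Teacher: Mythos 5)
Your argument is correct and is exactly the route the paper indicates (it gives no written proof, only the remark that the lemma follows from the trace theorem, scaling arguments, and the inverse inequality, citing \cite{Shi;Wang2013,HX2019}): reference-element trace embedding, affine scaling under shape regularity, and the inverse estimate $|\varphi|_{1,q,K}\lesssim h_K^{-1}\|\varphi\|_{0,q,K}$ for the polynomial case. The exponent book-keeping in your three scaling relations checks out, so nothing further is needed.
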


For any integer $ s\geq 0$, $ K\in \mathcal{T}_h $, $e\in \varepsilon_h$,
 let
$ Q^o_s:L^2(K)\rightarrow \mathcal{P}_s(K)$ and $ Q^b_s:L^2(e)\rightarrow \mathcal{P}_s(e)$ be the standard $L^2$ projection operators.
There vector/tensor analogues are denoted by
  $\mathbf{Q}^o_s$ and $\mathbf{Q}^b_s$, respectively.
\begin{myLem}\label{lemma1*}\cite{Shi;Wang2013}
For any $ K\in\mathcal{T}_{h} $, $ e\in \varepsilon_{h}$,
and 
$ 1\leq j\leq s+1 $, there hold
\begin{equation*}
\begin{aligned}
&\|\mathrm{v}-Q^{o}_{s}\mathrm{v}\|_{0,K}+h_K|\mathrm{v}-Q_{s}^{o}\mathrm{v}|_{1,K}
\lesssim h_K^{j}|\mathrm{v}|_{j,K},\ \forall \mathrm{v}\in H^{j}(K),
\\
&\|\mathrm{v}-Q^{o}_{s}\mathrm{v}\|_{0,\partial K}
+\|\mathrm{v}-Q^{b}_{s}\mathrm{v}\|_{0,\partial K}
\lesssim
h_K^{j-1/2}|\mathrm{v}|_{j,K},\ \forall \mathrm{v}\in H^{j}(K),
\\
&\|Q^{o}_{s}\mathrm{v}\|_{0,K}\leq \|\mathrm{v}\|_{0,K},\ \forall \mathrm{v}\in L^{2}(K),\\
&\|Q^{b}_{s}\mathrm{v}\|_{0,e}\leq \|\mathrm{v}\|_{0,e},\ \forall \mathrm{v}\in L^{2}(e).
\end{aligned}
\end{equation*}
\end{myLem}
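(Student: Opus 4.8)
The four estimates are the standard $L^2$-stability and approximation properties of the $L^2$ projections, and the plan is to obtain the stability bounds directly from orthogonality, the interior approximation bound from a Bramble--Hilbert/scaling argument, and the boundary bound by feeding the interior estimate into the trace inequality of Lemma \ref{lemma2*}.

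First I would dispose of the two stability estimates $\|Q^o_s\mathrm{v}\|_{0,K}\le\|\mathrm{v}\|_{0,K}$ and $\|Q^b_s\mathrm{v}\|_{0,e}\le\|\mathrm{v}\|_{0,e}$. Since $Q^o_s$ is the $L^2(K)$-orthogonal projection onto $\mathcal{P}_s(K)$, the defining relation $(Q^o_s\mathrm{v},\phi)_K=(\mathrm{v},\phi)_K$ for all $\phi\in\mathcal{P}_s(K)$ with the choice $\phi=Q^o_s\mathrm{v}$ gives $\|Q^o_s\mathrm{v}\|_{0,K}^2=(\mathrm{v},Q^o_s\mathrm{v})_K\le\|\mathrm{v}\|_{0,K}\|Q^o_s\mathrm{v}\|_{0,K}$ by Cauchy--Schwarz, and dividing yields the bound. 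The face projection $Q^b_s$ is treated identically on $e$.

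Next, for the interior estimate $\|\mathrm{v}-Q^o_s\mathrm{v}\|_{0,K}+h_K|\mathrm{v}-Q^o_s\mathrm{v}|_{1,K}\lesssim h_K^{j}|\mathrm{v}|_{j,K}$, I would pull back to a fixed reference simplex $\hat K$ through the affine map $F_K$. Because $Q^o_s$ reproduces $\mathcal{P}_s(\hat K)$ and $j\le s+1$, the Bramble--Hilbert lemma gives $\|\hat{\mathrm{v}}-\hat Q^o_s\hat{\mathrm{v}}\|_{j',\hat K}\lesssim|\hat{\mathrm{v}}|_{j,\hat K}$ for $j'=0,1$ on the reference element. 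Transforming norms back to $K$ with the shape-regularity bounds $|\det DF_K|\sim h_K^{d}$, $\|DF_K\|\sim h_K$ and $\|DF_K^{-1}\|\sim h_K^{-1}$ converts the reference estimate into the stated $h_K$-powers, with constants independent of $h_K$; the $H^1$-seminorm term picks up one fewer power of $h_K$, which is exactly the claimed scaling.

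Finally, the boundary estimate follows by applying Lemma \ref{lemma2*} with $q=2$ to $\varphi=\mathrm{v}-Q^o_s\mathrm{v}\in H^1(K)$, giving $\|\mathrm{v}-Q^o_s\mathrm{v}\|_{0,\partial K}\lesssim h_K^{-1/2}\|\mathrm{v}-Q^o_s\mathrm{v}\|_{0,K}+h_K^{1/2}|\mathrm{v}-Q^o_s\mathrm{v}|_{1,K}$, into which the interior estimate just proved inserts to produce $\lesssim h_K^{j-1/2}|\mathrm{v}|_{j,K}$. For the face-projection term, since $Q^o_s\mathrm{v}|_e\in\mathcal{P}_s(e)$ and $Q^b_s\mathrm{v}$ is the best $L^2(e)$-approximation in $\mathcal{P}_s(e)$, one has $\|\mathrm{v}-Q^b_s\mathrm{v}\|_{0,e}\le\|\mathrm{v}-Q^o_s\mathrm{v}\|_{0,e}$; summing over the faces $e\subset\partial K$ and reusing the bound just obtained completes the proof. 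The only genuinely delicate point is the scaling argument of the third paragraph: one must check that the affine pullback together with shape-regularity keeps every constant free of $h_K$, and it is precisely the hypothesis $j\le s+1$ that makes the Bramble--Hilbert step legitimate, everything else reducing to Cauchy--Schwarz, orthogonality, and the already-stated trace inequality.
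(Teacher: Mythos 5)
Your proof is correct. Note, however, that the paper offers no proof of this lemma at all: it is stated as a known result and attributed to \cite{Shi;Wang2013}, so there is no in-paper argument to compare against. What you have written is precisely the standard textbook derivation: non-expansiveness of orthogonal projections via Cauchy--Schwarz, the Bramble--Hilbert/scaling argument for the interior bound (where the hypothesis $j\le s+1$ guarantees that $I-Q^o_s$ annihilates $\mathcal{P}_{j-1}(K)\subset\mathcal{P}_s(K)$), the trace inequality of Lemma \ref{lemma2*} combined with the interior bound for the boundary estimate, and the best-approximation property of $Q^b_s$ on each face (using that the restriction of $Q^o_s\mathrm{v}$ to a face of a simplex lies in $\mathcal{P}_s(e)$). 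The only step you leave implicit is that the affine pullback to the reference simplex intertwines $Q^o_s$ with the reference-element $L^2$ projection; this holds because the Jacobian of the affine map is constant, so the claim is sound as written.
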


For any
$K\in\mathcal{T}_h$, we introduce  the local Raviart-Thomas($\mathcal{RT}$) element space
\begin{equation*}
\mathbb{RT}_s(K)=[\mathcal{P}_s(K)]^d+\mathbf{x}\mathcal{P}_s(K)
\end{equation*}
and the $\mathcal{RT}$ projection operator  $\mathbf{P}_s^{\mathcal{RT}}: [H^1(K)]^d \rightarrow  \mathbb{RT}_s(K)$  (cf. \cite{BBDDFF2008}) defined by

\begin{subequations}\label{lemma91*}
\begin{align}
\langle\mathbf{P}_s^{\mathcal{RT}}\mathbf{v}\cdot\mathbf{n}_e,w\rangle_e
&=\langle\mathbf{v}\cdot\mathbf{n}_e,w\rangle_e,\ \ \forall w\in \mathcal{P}_s(e),e\in\partial K, \quad \text{ for }s\geq 0,
\label{lemma91*:sub1}\\
(\mathbf{P}_s^{\mathcal{RT}}\mathbf{v},\mathbf{w} )_K&=(\mathbf{v},\mathbf{w} )_K,\ \ \forall \mathbf{w} \in [\mathcal{P}_s(K)]^d, \quad  \text{ for } s\geq 1.
\label{lemma91*:sub2}
\end{align}
\end{subequations}
  Lemmas \ref{lemma8*}-\ref{lemma17*} give some properties of the $\mathcal{RT}$ element space and  the $\mathcal{RT}$ projection. 

\begin{myLem}\label{lemma8*}\cite{BBDDFF2008}
For any $\mathbf{v}_{ho}\in\mathbb{RT}_s(K), $ the relation $\nabla\cdot\mathbf{v}_{ho}|_K=0$ implies that $\mathbf{v}_{ho}\in [\mathcal{P}_s(K)]^d$.
\end{myLem}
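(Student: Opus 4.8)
The plan is to exploit the explicit structure of the local Raviart--Thomas space together with Euler's identity for homogeneous polynomials. First I would take an arbitrary $\mathbf{v}_{ho}\in\mathbb{RT}_s(K)$ and, by the definition $\mathbb{RT}_s(K)=[\mathcal{P}_s(K)]^d+\mathbf{x}\mathcal{P}_s(K)$, write $\mathbf{v}_{ho}=\mathbf{p}+\mathbf{x}q$ with $\mathbf{p}\in[\mathcal{P}_s(K)]^d$ and $q\in\mathcal{P}_s(K)$. Because this decomposition is not unique — indeed $\mathbf{x}\mathcal{P}_{s-1}(K)\subset[\mathcal{P}_s(K)]^d$ — I would split $q=q_s+\tilde q$ into its homogeneous degree-$s$ part $q_s$ and a remainder $\tilde q\in\mathcal{P}_{s-1}(K)$, and absorb $\mathbf{x}\tilde q\in[\mathcal{P}_s(K)]^d$ into $\mathbf{p}$. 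This normalizes the form to $\mathbf{v}_{ho}=\mathbf{p}+\mathbf{x}q_s$ with $q_s$ homogeneous of degree exactly $s$, so that the genuine degree-$(s+1)$ content of $\mathbf{v}_{ho}$ sits entirely in the term $\mathbf{x}q_s$.

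Next I would compute the divergence. Using the product rule and $\nabla\cdot\mathbf{x}=d$, one has $\nabla\cdot(\mathbf{x}q_s)=dq_s+\mathbf{x}\cdot\nabla q_s$. The key point is Euler's identity for a polynomial homogeneous of degree $s$, namely $\mathbf{x}\cdot\nabla q_s=sq_s$, which gives $\nabla\cdot\mathbf{v}_{ho}=\nabla\cdot\mathbf{p}+(d+s)q_s$.

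Finally I would match homogeneous components by degree. The term $\nabla\cdot\mathbf{p}$ has degree at most $s-1$, whereas $(d+s)q_s$ is homogeneous of degree $s$. Imposing $\nabla\cdot\mathbf{v}_{ho}=0$ therefore forces the degree-$s$ homogeneous part to vanish, i.e. $(d+s)q_s=0$; since $d+s>0$ this yields $q_s=0$, whence $\mathbf{v}_{ho}=\mathbf{p}\in[\mathcal{P}_s(K)]^d$, as claimed.

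I do not expect a serious obstacle here; the one point needing care is the non-uniqueness of the decomposition $\mathbf{p}+\mathbf{x}q$, which is exactly why I would work with the top-degree homogeneous part $q_s$ rather than with $q$ itself. Recognizing that Euler's identity converts the divergence of $\mathbf{x}q_s$ into the \emph{nonzero} multiple $(d+s)q_s$ is the crux of the argument, since it is this non-degeneracy that forces the would-be degree-$(s+1)$ part to disappear under the divergence-free constraint.
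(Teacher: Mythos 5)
Your proof is correct and is precisely the standard argument for this fact (the paper itself gives no proof, simply citing \cite{BBDDFF2008}, where the same reasoning appears): normalize the decomposition $\mathbf{v}_{ho}=\mathbf{p}+\mathbf{x}q_s$ with $q_s$ homogeneous of degree $s$, apply Euler's identity to get $\nabla\cdot(\mathbf{x}q_s)=(d+s)q_s$, and compare homogeneous components of degree $s$. You correctly identify and handle the only delicate point, namely the non-uniqueness of the $\mathbf{p}+\mathbf{x}q$ decomposition.
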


\begin{myLem}\label{lemma9*}\cite{BBDDFF2008}
For any $K\in \mathcal{T}_h$ and $\mathbf{v}\in[H^1(K)]^d$, 
 the following properties   hold:
\begin{equation*}
(\nabla\cdot\mathbf{P}_s^{RT}\mathbf{v},\phi_h)_K=(\nabla\cdot\mathbf{v},\phi_h)_K, \ \
\forall \mathbf{v}\in[H^1(K)]^d,\phi_h\in \mathcal{P}_s(K), 
\end{equation*}
\begin{equation*}
\|\mathbf{v}-\mathbf{P}_s^{\mathcal{RT}}\mathbf{v}\|_{0,K}\lesssim h_K^j|\mathbf{v}|_{j,K},\ \ \
\forall 1\leq j\leq s+1,\ \ \forall \mathbf{v}\in[H^j(K)]^d.
\end{equation*}
\end{myLem}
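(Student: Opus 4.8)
The plan is to prove the two assertions separately: the first is an exact algebraic identity obtained by unwinding the defining degrees of freedom \eqref{lemma91*:sub1}--\eqref{lemma91*:sub2}, while the second is a standard Bramble--Hilbert approximation estimate obtained by a scaling argument. For the commuting divergence property I would fix $\phi_h\in\mathcal{P}_s(K)$ and integrate by parts on $K$,
\[
(\nabla\cdot\mathbf{P}_s^{\mathcal{RT}}\mathbf{v},\phi_h)_K
= -(\mathbf{P}_s^{\mathcal{RT}}\mathbf{v},\nabla\phi_h)_K
+\langle\mathbf{P}_s^{\mathcal{RT}}\mathbf{v}\cdot\mathbf{n},\phi_h\rangle_{\partial K}.
\]
Since $\nabla\phi_h\in[\mathcal{P}_{s-1}(K)]^d\subset[\mathcal{P}_s(K)]^d$, the volume condition \eqref{lemma91*:sub2} gives $(\mathbf{P}_s^{\mathcal{RT}}\mathbf{v},\nabla\phi_h)_K=(\mathbf{v},\nabla\phi_h)_K$ (the case $s=0$ being trivial, since then $\nabla\phi_h=0$); and because $\phi_h|_e\in\mathcal{P}_s(e)$ on each face $e\subset\partial K$, the face condition \eqref{lemma91*:sub1} gives $\langle\mathbf{P}_s^{\mathcal{RT}}\mathbf{v}\cdot\mathbf{n},\phi_h\rangle_{\partial K}=\langle\mathbf{v}\cdot\mathbf{n},\phi_h\rangle_{\partial K}$. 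Substituting both and integrating by parts in reverse recovers $(\nabla\cdot\mathbf{v},\phi_h)_K$, the claimed identity.

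For the approximation estimate, I would first observe that $\mathbf{P}_s^{\mathcal{RT}}$ reproduces $[\mathcal{P}_s(K)]^d$: indeed $[\mathcal{P}_s(K)]^d\subset\mathbb{RT}_s(K)$ and the projection is the identity on $\mathbb{RT}_s(K)$, so $\mathbf{P}_s^{\mathcal{RT}}\mathbf{q}=\mathbf{q}$ for all $\mathbf{q}\in[\mathcal{P}_s(K)]^d$. With this polynomial invariance in hand, I would pass to the reference simplex $\hat K$ through the contravariant Piola map, use that $\mathbf{P}_s^{\mathcal{RT}}$ commutes with this map, apply the Bramble--Hilbert lemma on $\hat K$ to bound $\|\hat{\mathbf{v}}-\hat{\mathbf{P}}_s^{\mathcal{RT}}\hat{\mathbf{v}}\|_{0,\hat K}\lesssim|\hat{\mathbf{v}}|_{j,\hat K}$ for each $1\le j\le s+1$, and finally scale back to $K$ using the shape regularity of $\mathcal{T}_h$ to produce the factor $h_K^j$.

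The main obstacle is the second part: carrying out the scaling rigorously requires verifying that $\mathbf{P}_s^{\mathcal{RT}}$ commutes with the Piola transform---so that the reference-element projection error controls the error on $K$---and tracking the correct powers of $h_K$ generated by the Piola map and its Jacobian under the shape-regularity assumption. The commuting divergence identity, by contrast, is a direct consequence of the degrees of freedom and poses no real difficulty.
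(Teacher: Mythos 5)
Your argument is correct. The paper does not prove this lemma at all --- it is quoted from \cite{BBDDFF2008} --- and what you have written is precisely the standard proof found there: the commuting-divergence identity follows by integrating by parts and invoking the face moments \eqref{lemma91*:sub1} (with $\phi_h|_e\in\mathcal{P}_s(e)$) and the interior moments \eqref{lemma91*:sub2} (with $\nabla\phi_h\in[\mathcal{P}_{s-1}(K)]^d$, vacuous when $s=0$), while the $L^2$ estimate follows from polynomial invariance of $\mathbf{P}_s^{\mathcal{RT}}$ on $[\mathcal{P}_s(K)]^d$, commutativity with the contravariant Piola map, the Bramble--Hilbert lemma on the reference simplex, and shape-regular scaling. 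The only point worth flagging is that you correctly use just the moments against $[\mathcal{P}_{s-1}(K)]^d$ (the gradients of $\mathcal{P}_s(K)$), which is the genuinely unisolvent set of interior degrees of freedom for $\mathbb{RT}_s(K)$, even though the paper's condition \eqref{lemma91*:sub2} is stated against the larger space $[\mathcal{P}_s(K)]^d$.
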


By using   the triangle inequality, the inverse inequality, Lemma \ref{lemma1*} and
Lemma \ref{lemma9*} we can get more estimates for the $\mathcal{RT}$ projection (cf. \cite{HX2019}):
\begin{myLem}\label{lemma17*}
  For any $K\in \mathcal{T}_h$, $\mathbf{v}\in[H^j(K)]^d$ and $1\leq j\leq s+1$,
the following estimates hold:
\begin{align*}
|\mathbf{v}-\mathbf{P}_s^{\mathcal{RT}}\mathbf{v}|_{1,K}
&\lesssim
h_K^{j-1}|\mathbf{v}|_{j,K}, \\
|\mathbf{v}-\mathbf{P}_s^{\mathcal{RT}}\mathbf{v}|_{0,\partial K}
&\lesssim
h_K^{j-\frac{1}{2}}|\mathbf{v}|_{j,K},
\\
|\mathbf{v}-\mathbf{P}_s^{\mathcal{RT}}\mathbf{v}|_{0,3,K}&\lesssim h_K^{j-\frac{d}{6}}|\mathbf{v}|_{j,K},\\
|\mathbf{v}-\mathbf{P}_s^{\mathcal{RT}}\mathbf{v}|_{0,3,\partial K}&\lesssim h_K^{j-\frac{1}{3}-\frac{d}{6}}|\mathbf{v}|_{j,K}.
\end{align*}
\end{myLem}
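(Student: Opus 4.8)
The plan is to prove the four estimates one by one, in each case comparing $\mathbf{v}-\mathbf{P}_s^{\mathcal{RT}}\mathbf{v}$ with the $L^2$-projection error $\mathbf{v}-\mathbf{Q}_s^o\mathbf{v}$, for which Lemma \ref{lemma1*} already gives optimal bounds, and controlling the remaining polynomial piece $\mathbf{Q}_s^o\mathbf{v}-\mathbf{P}_s^{\mathcal{RT}}\mathbf{v}$ by an inverse inequality. The recurring workhorse is the $L^2$ bound: by the triangle inequality, Lemma \ref{lemma1*} and Lemma \ref{lemma9*}, $\|\mathbf{Q}_s^o\mathbf{v}-\mathbf{P}_s^{\mathcal{RT}}\mathbf{v}\|_{0,K}\leq\|\mathbf{Q}_s^o\mathbf{v}-\mathbf{v}\|_{0,K}+\|\mathbf{v}-\mathbf{P}_s^{\mathcal{RT}}\mathbf{v}\|_{0,K}\lesssim h_K^{j}|\mathbf{v}|_{j,K}$, and we note that $\mathbf{Q}_s^o\mathbf{v}-\mathbf{P}_s^{\mathcal{RT}}\mathbf{v}\in[\mathcal{P}_{s+1}(K)]^d$ is a polynomial of fixed degree, so all inverse inequalities below are legitimate.

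For the $H^1$-seminorm estimate I would split $|\mathbf{v}-\mathbf{P}_s^{\mathcal{RT}}\mathbf{v}|_{1,K}\leq|\mathbf{v}-\mathbf{Q}_s^o\mathbf{v}|_{1,K}+|\mathbf{Q}_s^o\mathbf{v}-\mathbf{P}_s^{\mathcal{RT}}\mathbf{v}|_{1,K}$. The first term is $\lesssim h_K^{j-1}|\mathbf{v}|_{j,K}$ by Lemma \ref{lemma1*}; the second, by the inverse inequality $|p|_{1,K}\lesssim h_K^{-1}\|p\|_{0,K}$ and the workhorse bound, is also $\lesssim h_K^{-1}\cdot h_K^{j}|\mathbf{v}|_{j,K}=h_K^{j-1}|\mathbf{v}|_{j,K}$. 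The boundary $L^2$ estimate then follows by applying Lemma \ref{lemma2*} with $q=2$ componentwise to $\mathbf{v}-\mathbf{P}_s^{\mathcal{RT}}\mathbf{v}$, namely $|\mathbf{v}-\mathbf{P}_s^{\mathcal{RT}}\mathbf{v}|_{0,\partial K}\lesssim h_K^{-1/2}\|\mathbf{v}-\mathbf{P}_s^{\mathcal{RT}}\mathbf{v}\|_{0,K}+h_K^{1/2}|\mathbf{v}-\mathbf{P}_s^{\mathcal{RT}}\mathbf{v}|_{1,K}$, and inserting Lemma \ref{lemma9*} for the first summand and the just-proved $H^1$ bound for the second, which gives $h_K^{j-1/2}|\mathbf{v}|_{j,K}$.

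The two $L^3$ estimates are the crux, because Lemmas \ref{lemma1*} and \ref{lemma9*} are stated only in $L^2$. Splitting once more, the polynomial remainder is handled by the scaled $L^3$--$L^2$ inverse inequality $\|p\|_{0,3,K}\lesssim h_K^{-d/6}\|p\|_{0,K}$ (the exponent $-d/6=d(1/3-1/2)$ is exactly what produces the $h_K^{-d/6}$ factor) together with the workhorse bound, giving $\|\mathbf{Q}_s^o\mathbf{v}-\mathbf{P}_s^{\mathcal{RT}}\mathbf{v}\|_{0,3,K}\lesssim h_K^{j-d/6}|\mathbf{v}|_{j,K}$. For the genuinely non-polynomial part one needs the $L^3$ companion of Lemma \ref{lemma1*}, $\|\mathbf{v}-\mathbf{Q}_s^o\mathbf{v}\|_{0,3,K}\lesssim h_K^{j-d/6}|\mathbf{v}|_{j,K}$; this is obtained by the same scaling/Bramble--Hilbert argument behind Lemma \ref{lemma1*}, the only new ingredient being the Sobolev embedding $H^1(\widehat K)\hookrightarrow L^3(\widehat K)$ on the reference element (valid for $d\leq 3$), which lets the $L^2$-based seminorm $|\mathbf{v}|_{j,K}$ survive on the right-hand side. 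Summing the two pieces yields the volume $L^3$ bound. For the boundary $L^3$ estimate I would use Lemma \ref{lemma2*} with $q=3$, $|\mathbf{v}-\mathbf{P}_s^{\mathcal{RT}}\mathbf{v}|_{0,3,\partial K}\lesssim h_K^{-1/3}\|\mathbf{v}-\mathbf{P}_s^{\mathcal{RT}}\mathbf{v}\|_{0,3,K}+h_K^{2/3}|\mathbf{v}-\mathbf{P}_s^{\mathcal{RT}}\mathbf{v}|_{1,3,K}$; the first summand uses the volume $L^3$ bound just derived, and the second requires the auxiliary estimate $|\mathbf{v}-\mathbf{P}_s^{\mathcal{RT}}\mathbf{v}|_{1,3,K}\lesssim h_K^{j-1-d/6}|\mathbf{v}|_{j,K}$, proved by the identical split (now combining the $L^3$--$L^2$ and $H^1$--$L^2$ inverse inequalities on the polynomial piece with the $W^{1,3}$ companion of Lemma \ref{lemma1*} on the rest). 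Both contributions collapse to $h_K^{j-1/3-d/6}|\mathbf{v}|_{j,K}$.

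I expect the main obstacle to be the two $L^3$ bounds, and specifically the justification that an $L^2$-based seminorm $|\mathbf{v}|_{j,K}$ may be kept on the right while the left-hand norm lives in $L^3$. The clean way around it is to pay the $h_K^{-d/6}$ inverse-inequality price only on polynomials, where it is legitimate, and to absorb the non-polynomial $L^3$ error through the reference-element Sobolev embedding; the rest is routine bookkeeping of fractional powers of $h_K$.
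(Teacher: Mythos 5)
Your decomposition $\mathbf{v}-\mathbf{P}_s^{\mathcal{RT}}\mathbf{v}=(\mathbf{v}-\mathbf{Q}_s^o\mathbf{v})+(\mathbf{Q}_s^o\mathbf{v}-\mathbf{P}_s^{\mathcal{RT}}\mathbf{v})$, with Lemma \ref{lemma1*} on the first piece and inverse inequalities on the polynomial second piece controlled by the $L^2$ ``workhorse'' bound, is exactly the route the paper indicates (it gives no detailed proof, only the remark that the estimates follow from the triangle inequality, the inverse inequality, Lemmas \ref{lemma1*} and \ref{lemma9*}, cf.\ the cited reference), and your treatment of the first three estimates, including the exponent bookkeeping $h_K^{-d/6}$ and the reference-element embedding $H^1(\widehat K)\hookrightarrow L^3(\widehat K)$, is sound.

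The one soft spot is the last estimate at the endpoint $j=1$. Your route passes through Lemma \ref{lemma2*} with $q=3$ and therefore needs $|\mathbf{v}-\mathbf{P}_s^{\mathcal{RT}}\mathbf{v}|_{1,3,K}\lesssim h_K^{j-1-d/6}|\mathbf{v}|_{j,K}$, whose non-polynomial part requires the ``$W^{1,3}$ companion'' of Lemma \ref{lemma1*}. That companion rests on $H^j(\widehat K)\hookrightarrow W^{1,3}(\widehat K)$, which holds for $j\geq 2$ but fails for $j=1$ (an $H^1$ function need not have its gradient in $L^3$ for $d=2,3$), so for $j=1$ the second summand of the $q=3$ trace inequality is not controlled — indeed may be infinite. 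The estimate itself is still true for $j=1$: either prove it directly by mapping to the reference element, using the trace embedding $H^j(\widehat K)\hookrightarrow L^3(\partial\widehat K)$ and Bramble--Hilbert for the non-polynomial part (the scaling factor $h_K^{(d-1)/3}\cdot h_K^{j-d/2}=h_K^{j-1/3-d/6}$ comes out right), and the boundary inverse inequality $\|\cdot\|_{0,3,\partial K}\lesssim h_K^{-1/3}\|\cdot\|_{0,3,K}$ from Lemma \ref{lemma2*} for the polynomial part; or simply note that the paper only ever invokes this lemma with $j\geq 2$ (all applications in Lemma \ref{lemma18*} carry a factor $\|\mathbf{u}\|_2$ or $\|\mathbf{B}\|_2$), so restricting the $L^3$ statements to $2\leq j\leq s+1$ would also suffice.
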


We also have the following commutativity properties for the   $\mathcal{RT}$ projection, the $L^2$ projections and the discrete weak operators:
\begin{myLem}\label{lemma11*}
\cite{CFX2016,MWYZ2015} For $k\geq 1$, 
 there hold
\begin{eqnarray*}
&\nabla_{w,k}\{Q_{k-1}^oq,Q_k^bq\}=\mathbf{Q}_k^o(\nabla q),&
\forall q\in H^1(\Omega),\\
&\nabla_{w,k-1}\{\mathbf{P}_k^{\mathcal{RT}}\mathbf{v},\mathbf{Q}_k^b\mathbf{v}\}=\mathbf{Q}_{k-1}^o(\nabla\mathbf{v}), &
\forall\mathbf{v}\in[H^1(\Omega)]^d,\\
&\nabla_{w,k-1}\times\{\mathbf{Q}_k^o\mathbf{w},\mathbf{Q}_k^b\mathbf{w}\}=\mathbf{Q}_{k-1}^o(\nabla\times\mathbf{w}),
&\forall\mathbf{w}\in\mathbf{H}(\text{curl};\Omega).
\end{eqnarray*}
\end{myLem}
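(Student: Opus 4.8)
The plan is to establish all three identities with one common template: on each $K\in\mathcal{T}_h$ I test the weak operator against an arbitrary polynomial in the relevant test space, unfold its definition, use the defining moment conditions of the projections to discard the projections acting on the data, apply Green's formula, and finally recognize the $L^2$-projection on the right-hand side. Since on each $K$ both sides are polynomials of the matching degree, equality of their $(\cdot,\cdot)_K$ inner products against all admissible test polynomials will force the identity.

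For the first identity I fix $K$ and $\bm{\phi}\in[\mathcal{P}_k(K)]^d$ and apply the definition \eqref{weak-gra} of $\nabla_{w,k}$ to $\{Q_{k-1}^oq,Q_k^bq\}$, obtaining
\[
(\nabla_{w,k}\{Q_{k-1}^oq,Q_k^bq\},\bm{\phi})_K=-(Q_{k-1}^oq,\nabla\cdot\bm{\phi})_K+\langle Q_k^bq,\bm{\phi}\cdot\mathbf{n}_K\rangle_{\partial K}.
\]
Here $\nabla\cdot\bm{\phi}\in\mathcal{P}_{k-1}(K)$ and $\bm{\phi}\cdot\mathbf{n}_K|_e\in\mathcal{P}_k(e)$, so the definitions of $Q_{k-1}^o$ and $Q_k^b$ let me replace the projections of $q$ by $q$ itself; Green's formula then collapses the two terms into $(\nabla q,\bm{\phi})_K$, and the definition of $\mathbf{Q}_k^o$ gives $(\nabla q,\bm{\phi})_K=(\mathbf{Q}_k^o\nabla q,\bm{\phi})_K$, which proves the claim.

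The second identity is the one where I expect the only real subtlety, and I would treat it componentwise. The interior datum of the $i$-th component is $(\mathbf{P}_k^{\mathcal{RT}}\mathbf{v})_i$, which is the \emph{Raviart--Thomas}, not the $L^2$, projection, so I cannot simply cancel it against an $L^2$ moment. Instead, for $\bm{\phi}\in[\mathcal{P}_{k-1}(K)]^d$ I write, with $\mathbf{e}_i$ the $i$-th Cartesian unit vector,
\[
((\mathbf{P}_k^{\mathcal{RT}}\mathbf{v})_i,\nabla\cdot\bm{\phi})_K=(\mathbf{P}_k^{\mathcal{RT}}\mathbf{v},(\nabla\cdot\bm{\phi})\mathbf{e}_i)_K,
\]
and observe that $(\nabla\cdot\bm{\phi})\mathbf{e}_i\in[\mathcal{P}_{k-2}(K)]^d\subset[\mathcal{P}_k(K)]^d$, so the interior moment condition \eqref{lemma91*:sub2} (valid since $k\geq1$) replaces $\mathbf{P}_k^{\mathcal{RT}}\mathbf{v}$ by $\mathbf{v}$. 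The boundary datum $Q_k^bv_i$ is handled exactly as in the first identity, and Green's formula together with the definition of $\mathbf{Q}_{k-1}^o$ closes the argument. The crux, and the main obstacle, is precisely this degree bookkeeping: one must check that $(\nabla\cdot\bm{\phi})\mathbf{e}_i$ lands within the range of the RT moment condition, which is what lets the $\mathcal{RT}$ projection behave like an $L^2$ projection against these particular test data.

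For the third identity I fix $\bm{\phi}\in[\mathcal{P}_{k-1}(K)]^{2d-3}$ and apply the definition \eqref{weak-curl} of $\nabla_{w,k-1}\times$ to $\{\mathbf{Q}_k^o\mathbf{w},\mathbf{Q}_k^b\mathbf{w}\}$. Since $\nabla\times\bm{\phi}$ has componentwise degree at most $k-1\leq k$, the definition of $\mathbf{Q}_k^o$ replaces $\mathbf{Q}_k^o\mathbf{w}$ by $\mathbf{w}$ in the volume term. For the boundary term I transfer the cross product with $\mathbf{n}$ onto the test function via the scalar-triple-product identity $\langle\mathbf{Q}_k^b\mathbf{w}\times\mathbf{n},\bm{\phi}\rangle_e=\langle\mathbf{Q}_k^b\mathbf{w},\mathbf{n}\times\bm{\phi}\rangle_e$ in three dimensions (with its two-dimensional analogue), note that $\mathbf{n}\times\bm{\phi}|_e$ still lies in the range of $\mathbf{Q}_k^b$ because $\mathbf{n}$ is constant on $e$, and cancel the projection to recover $\langle\mathbf{w}\times\mathbf{n},\bm{\phi}\rangle_{\partial K}$. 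Green's formula for the curl (consistent with the orientation built into \eqref{weak-curl}) then yields $(\nabla\times\mathbf{w},\bm{\phi})_K$, and the definition of $\mathbf{Q}_{k-1}^o$ finishes the proof.
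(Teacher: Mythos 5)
Your proof is correct. Note that the paper itself offers no proof of this lemma --- it is quoted from \cite{CFX2016,MWYZ2015} --- so there is no in-text argument to compare against; what you have written is precisely the standard verification one finds in those references: unfold the weak operator against a test polynomial, use the moment conditions of the projections to strip them off the data, integrate by parts, and identify the $L^2$ projection on the right. You correctly isolate the one genuinely delicate point, namely that in the second identity the interior datum is the Raviart--Thomas rather than the $L^2$ projection, and your degree bookkeeping $(\nabla\cdot\bm{\phi})\mathbf{e}_i\in[\mathcal{P}_{k-2}(K)]^d$ places the test datum safely inside the range of the interior RT moments under either reading of \eqref{lemma91*:sub2} (as printed the paper tests against $[\mathcal{P}_s(K)]^d$, whereas the classical RT interior degrees of freedom are against $[\mathcal{P}_{s-1}(K)]^d$; your argument survives both). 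Two harmless slips: in the third identity $\nabla\times\bm{\phi}$ actually has degree at most $k-2$, not $k-1$ (you only need $\le k$, so nothing breaks), and the sign in the curl integration by parts must be read against the paper's own (somewhat nonstandard) convention in \eqref{weak-curl}, which you sensibly flag rather than resolve.
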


\section{ Existence and uniqueness of discrete solution }

\subsection{Stability conditions}
\begin{myLem}\label{lemma13*}
 For any $ \mathbf{u}_h, \mathbf{v}_h,\Phi_h \in\mathbf{V}_h^0$, and $\mathbf{B}_h, \mathbf{w}_h \in\mathbf{W}_h^0$, there hold the following stability conditions:
\begin{subequations}\label{Stability conditions*}
\begin{eqnarray}
&&a_{h}(\mathbf{u}_h,\mathbf{v}_h)\leq\frac{1}{H_a^2}|||\mathbf{u}_h|||_V|||\mathbf{v}_h|||_V,
\label{Stability conditions:sub1}\\
&&a_{h}(\mathbf{v}_h,\mathbf{v}_h)=\frac{1}{H_a^2}|||\mathbf{v}_h|||_V^2,
\label{Stability conditions:sub2}\\
&&\tilde{a}_{h}(\mathbf{B}_h,\mathbf{w}_h)
\leq\frac{1}{R_m^2}|||\mathbf{B}_h|||_W|||\mathbf{w}_h|||_W,
\label{Stability conditions:sub3}\\
&&\tilde{a}_{h}(\mathbf{w}_h,\mathbf{w}_h)
=\frac{1}{R_m^2}|||\mathbf{w}_h|||_W^2,
\label{Stability conditions:sub4}\\
&&c_{h}(\Phi_h;\mathbf{v}_h,\mathbf{v}_h)=0, \label{Stability conditions:sub4-1}\\
&&c_{h}(\Phi_h;\mathbf{u}_h,\mathbf{v}_h)\leq M_h |||\Phi_h|||_V|||\mathbf{u}_h|||_V|||\mathbf{v}_h|||_V \lesssim
|||\Phi_h|||_V|||\mathbf{u}_h|||_V|||\mathbf{v}_h|||_V,
\label{Stability conditions:sub5}\\
&&\tilde{c}_{h}(\mathbf{v}_h;\mathbf{B}_h,\mathbf{w}_h)\leq \tilde M_h |||\mathbf{B}_h|||_W|||\mathbf{w}_h|||_W|||\mathbf{v}_h|||_V
\lesssim
|||\mathbf{B}_h|||_W|||\mathbf{w}_h|||_W|||\mathbf{v}_h|||_V,
\label{Stability conditions:sub6}
\end{eqnarray}
\end{subequations}
where
\begin{eqnarray}
&&M_{h}:=\sup_{\mathbf{0}\neq\Phi_h,\mathbf{u}_h,\mathbf{v}_h\in \mathbf{\bar{V}}_h}
\frac{c_{h}(\Phi_h;\mathbf{u}_h,\mathbf{v}_h)}
{|||\Phi_h|||_V|||\mathbf{u}_h|||_V|||\mathbf{v}_h|||_V},\label{Mh}
\\
&&\tilde{M}_{h}:=\sup_{
\substack{
\mathbf{0}\neq\mathbf{B}_h,\Psi_h\in\mathbf{\bar{W}}_h,\ \
\mathbf{0}\neq\mathbf{v}_h\in \mathbf{\bar{V}}_h}
}
\frac{\tilde{c}_{h}(\mathbf{v}_h;\mathbf{B}_h,\Psi_h)}
{|||\Psi_h|||_W|||\mathbf{v}_h|||_V|||\mathbf{B}_h|||_W}.\label{tilde-Mh}
\end{eqnarray}

 \end{myLem}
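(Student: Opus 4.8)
The plan is to split the six assertions into the bilinear estimates \eqref{Stability conditions:sub1}--\eqref{Stability conditions:sub4}, the antisymmetry \eqref{Stability conditions:sub4-1}, and the trilinear bounds \eqref{Stability conditions:sub5}--\eqref{Stability conditions:sub6}. The first two groups are immediate. Unfolding the definition of $a_h$ against the norm $|||\cdot|||_V$ gives $a_h(\mathbf v_h,\mathbf v_h)=\frac1{H_a^2}\big(\|\nabla_{w,k-1}\mathbf v_h\|_0^2+\|\tau^{1/2}(\mathbf v_{ho}-\mathbf v_{hb})\|_{0,\partial\mathcal T_h}^2\big)=\frac1{H_a^2}|||\mathbf v_h|||_V^2$, which is \eqref{Stability conditions:sub2}; applying the Cauchy--Schwarz inequality to the volume inner product and to the $\tau$-weighted boundary inner product separately yields the continuity \eqref{Stability conditions:sub1}. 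The curl-type estimates \eqref{Stability conditions:sub3}--\eqref{Stability conditions:sub4} are proved verbatim for $\tilde a_h$ with $|||\cdot|||_W$ in place of $|||\cdot|||_V$, and \eqref{Stability conditions:sub4-1} is precisely the identity \eqref{ch=0} recorded earlier. Hence the real content is that the constants $M_h$ and $\tilde M_h$ of \eqref{Mh}--\eqref{tilde-Mh} are bounded independently of $h$.

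For the boundedness of $c_h$ I would first expand it through the definition of the weak divergence. Since $\nabla_{w,k}\cdot\{\cdot,\cdot\}\in[\mathcal P_k(K)]^d$ and $\mathbf v_{ho}\in[\mathcal P_k(K)]^d$, testing element by element rewrites the first summand of $c_h(\Phi_h;\mathbf u_h,\mathbf v_h)$ as $\sum_K\big(-(\mathbf u_{ho}\otimes\Phi_{ho},\nabla\mathbf v_{ho})_K+\langle(\Phi_{hb}\cdot\mathbf n_K)\,\mathbf u_{hb},\mathbf v_{ho}\rangle_{\partial K}\big)$, and similarly for the second (symmetric) summand. The volume terms I would control by the generalized H\"older inequality $\|\mathbf u_{ho}\|_{0,4}\|\Phi_{ho}\|_{0,4}\|\nabla_h\mathbf v_{ho}\|_0$ together with the discrete embeddings $\|\cdot\|_{0,4}\lesssim|||\cdot|||_V$ from \eqref{lemma7*:3} (admissible for $d\le3$) and $\|\nabla_h\mathbf v_{ho}\|_0\lesssim|||\mathbf v_h|||_V$ from Lemma~\ref{lemma7*}. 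The face terms I would treat with the trace/inverse inequality of Lemma~\ref{lemma2*}, writing $\mathbf u_{hb}=\mathbf u_{ho}-(\mathbf u_{ho}-\mathbf u_{hb})$ so that the interior part converts into a volume $L^q$-norm while the jump part is absorbed by the $\tau^{1/2}$-weighted boundary contribution in $|||\mathbf u_h|||_V$. Collecting the estimates gives $c_h(\Phi_h;\mathbf u_h,\mathbf v_h)\lesssim|||\Phi_h|||_V|||\mathbf u_h|||_V|||\mathbf v_h|||_V$ on all of $\mathbf V_h^0$, whence $M_h\lesssim1$ and the first inequality in \eqref{Stability conditions:sub5} is, on $\mathbf{\bar{V}}_h$, definitional.

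The bound for $\tilde c_h$ is more direct. Cauchy--Schwarz gives $|\tilde c_h(\mathbf v_h;\mathbf B_h,\mathbf w_h)|\le\frac1{R_m}\|\nabla_{w,k}\times\mathbf w_h\|_0\,\|\mathbf v_{ho}\times\mathbf B_{ho}\|_0$, and $\|\nabla_{w,k}\times\mathbf w_h\|_0\lesssim|||\mathbf w_h|||_W$ follows from \eqref{lemma41*:sub2} (taken with $m=k$) after summation over $K$ and use of $\tau|_{\partial K}=h_K^{-1}$. For the cross product I would use H\"older with $\tfrac16+\tfrac13=\tfrac12$, i.e. $\|\mathbf v_{ho}\times\mathbf B_{ho}\|_0\le\|\mathbf v_{ho}\|_{0,6}\|\mathbf B_{ho}\|_{0,3}$, and then $\|\mathbf v_{ho}\|_{0,6}\lesssim|||\mathbf v_h|||_V$ from \eqref{lemma7*:3} and $\|\mathbf B_{ho}\|_{0,3}\lesssim|||\mathbf B_h|||_W$ from Lemma~\ref{lemma12*}. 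Combining these yields $\tilde c_h\lesssim|||\mathbf B_h|||_W|||\mathbf w_h|||_W|||\mathbf v_h|||_V$ and hence $\tilde M_h\lesssim1$. Note that the last embedding holds only on $\mathbf{\bar{W}}_h$, where the divergence-free property \eqref{barW=} is available; this is exactly why the supremum \eqref{tilde-Mh} is restricted to $\mathbf{\bar{W}}_h$ (and, correspondingly, to $\mathbf{\bar{V}}_h$).

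I expect the main obstacle to be the face integrals produced in the expansion of $c_h$: they couple three trace quantities and must be returned to the volume norms governed by $|||\cdot|||_V$ without losing powers of $h_K$, which forces a careful balancing of the trace/inverse inequality (Lemma~\ref{lemma2*}) against the jump penalization built into $|||\cdot|||_V$. The only other point requiring care is the bookkeeping of function spaces: the $L^3$ control of $\mathbf B_{ho}$ in Lemma~\ref{lemma12*} is valid solely on $\mathbf{\bar{W}}_h$, so the trilinear bounds in their sharp (constant $M_h$, $\tilde M_h$) form are asserted on the constrained spaces appearing in \eqref{Mh}--\eqref{tilde-Mh}.
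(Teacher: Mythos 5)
Your handling of \eqref{Stability conditions:sub1}--\eqref{Stability conditions:sub4-1}, of the definitional inequalities involving $M_h$ and $\tilde M_h$, and of the bound for $\tilde c_h$ is in substance the paper's argument. (For $\tilde c_h$ the paper first splits off the face contribution $\langle(\mathbf{w}_{ho}-\mathbf{w}_{hb})\times\mathbf{n},\mathbf{v}_{ho}\times\mathbf{B}_{ho}\rangle_{\partial\mathcal{T}_h}$ via the definition of the weak curl and estimates it separately, whereas you apply Cauchy--Schwarz directly to $(\nabla_{w,k}\times\mathbf{w}_h,\mathbf{v}_{ho}\times\mathbf{B}_{ho})$ and then pass to $|||\mathbf{w}_h|||_W$ through Lemma~\ref{lemma3*}; both routes rest on the same embeddings $\|\mathbf{v}_{ho}\|_{0,6}\lesssim|||\mathbf{v}_h|||_V$ and $\|\mathbf{B}_{ho}\|_{0,3}\lesssim|||\mathbf{B}_h|||_W$, and your observation that the latter is available only on $\mathbf{\bar{W}}_h$, consistent with the restriction built into \eqref{Mh}--\eqref{tilde-Mh}, is correct and indeed more careful than the lemma's own statement.)

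The gap is in the face terms of $c_h$. After expanding the weak divergence, the first summand leaves you with $\langle(\Phi_{hb}\cdot\mathbf{n})\,\mathbf{u}_{hb},\mathbf{v}_{ho}\rangle_{\partial\mathcal{T}_h}$, and you propose the single substitution $\mathbf{u}_{hb}=\mathbf{u}_{ho}-(\mathbf{u}_{ho}-\mathbf{u}_{hb})$, claiming the ``interior part converts into a volume $L^q$-norm.'' It does not: the residual term $\langle(\Phi_{hb}\cdot\mathbf{n})\,\mathbf{u}_{ho},\mathbf{v}_{ho}\rangle_{\partial\mathcal{T}_h}$ (and, after the further substitution $\Phi_{hb}=\Phi_{ho}-(\Phi_{ho}-\Phi_{hb})$ that is forced because $|||\Phi_h|||_V$ controls $\Phi_{hb}$ only through the jump, the term $\langle(\Phi_{ho}\cdot\mathbf{n})\,\mathbf{u}_{ho},\mathbf{v}_{ho}\rangle_{\partial\mathcal{T}_h}$) contains no jump factor, so pulling all three traces back to the element by Lemma~\ref{lemma2*} costs $h_K^{-1/q_1-1/q_2-1/q_3}=h_K^{-1}$ with nothing to compensate. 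Such a zero-jump face term cannot be bounded by $|||\Phi_h|||_V|||\mathbf{u}_h|||_V|||\mathbf{v}_h|||_V$ on its own. The missing idea is that it cancels exactly against the corresponding zero-jump term produced by the second (antisymmetrized) summand of $c_h$, since $(\mathbf{u}\otimes\Phi)\mathbf{n}\cdot\mathbf{v}=(\Phi\cdot\mathbf{n})(\mathbf{u}\cdot\mathbf{v})$ is symmetric in $\mathbf{u}$ and $\mathbf{v}$. This cancellation is precisely what the paper's rearrangement of $2c_h(\Phi_h;\mathbf{u}_h,\mathbf{v}_h)$ into the terms $\mathcal{I}_1,\dots,\mathcal{I}_5$ achieves: every surviving face integral carries at least one jump, which supplies the factor $h_K^{1/2}$ per jump needed to offset the trace inequalities. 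Without exploiting the skew-symmetry at the level of the face integrals, the estimate of $c_h$ as you describe it would fail.
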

\begin{proof}
From the definitions of $a_{h}(\cdot,\cdot)$, $\tilde{a}_{h}(\cdot,\cdot)$,
the Cauchy-Schwarz inequality and Lemmas \ref{lemma3*} we can easily get (\ref{Stability conditions:sub1})-(\ref{Stability conditions:sub4}). The relation (\ref{Stability conditions:sub4-1}) follows   from the definition of $c_{h}(\cdot;\cdot,\cdot)$, and the inequalities
$$c_{h}(\Phi_h;\mathbf{u}_h,\mathbf{v}_h)\leq M_h |||\Phi_h|||_V|||\mathbf{u}_h|||_V|||\mathbf{v}_h|||_V $$
and
$$\tilde{c}_{h}(\mathbf{v}_h;\mathbf{B}_h,\mathbf{w}_h)\leq \tilde M_h |||\mathbf{B}_h|||_W|||\mathbf{w}_h|||_W|||\mathbf{v}_h|||_V$$
follow from the definitions of $ {M}_{h}$  and $\tilde{M}_{h}$, respectively. The thing left is to prove
$c_{h}(\Phi_h;\mathbf{u}_h,\mathbf{v}_h)\lesssim |||\Phi_h|||_V|||\mathbf{u}_h|||_V|||\mathbf{v}_h|||_V $
and
$\tilde{c}_{h}(\mathbf{v}_h;\mathbf{B}_h,\mathbf{w}_h)\lesssim |||\mathbf{B}_h|||_W|||\mathbf{w}_h|||_W|||\mathbf{v}_h|||_V$.

For all $\mathbf{u}_h,\mathbf{v}_h\in\mathbf{V}_h^0$, using
the definitions of $c_{h}(\cdot;\cdot,\cdot)$ and $\nabla_{w,k}\cdot$ we have
\begin{align*}
2c_{h}(\Phi_h;\mathbf{u}_h,\mathbf{v}_h)&=
\frac{1}{N}\{
(\mathbf{v}_{ho}\otimes\Phi_{ho},\nabla_h\mathbf{u}_{ho})-
(\mathbf{u}_{ho}\otimes\Phi_{ho},\nabla_h\mathbf{v}_{ho})\\
&\ \ \ \
-\langle\mathbf{v}_{hb}\otimes\Phi_{hb}\mathbf{n},\mathbf{u}_{ho}\rangle_{\partial\mathcal{T}_h}
+\langle\mathbf{u}_{hb}\otimes\Phi_{hb}\mathbf{n},\mathbf{v}_{ho}\rangle_{\partial\mathcal{T}_h}\}
\\
&=\frac{1}{N}\{(\mathbf{v}_{ho}\otimes\Phi_{ho},\nabla_h\mathbf{u}_{ho})-
(\mathbf{u}_{ho}\otimes\Phi_{ho},\nabla_h\mathbf{v}_{ho})\\
&\ \ \ \ +\langle(\mathbf{u}_{ho}-\mathbf{u}_{hb})
\otimes(\Phi_{ho}-\Phi_{hb})
\mathbf{n},\mathbf{v}_{ho}\rangle_{\partial\mathcal{T}_h}
-\langle(\mathbf{u}_{ho}-\mathbf{u}_{hb})
\otimes\Phi_{ho}\mathbf{n},\mathbf{v}_{ho}\rangle_{\partial\mathcal{T}_h}\\
&\ \ \ \ -\langle(\mathbf{v}_{ho}-\mathbf{v}_{hb})
\otimes(\Phi_{ho}-\Phi_{hb})
\mathbf{n},\mathbf{u}_{ho}\rangle_{\partial\mathcal{T}_h}
+\langle(\mathbf{v}_{ho}-\mathbf{v}_{hb})
\otimes\Phi_{ho}\mathbf{n},\mathbf{u}_{ho}\rangle_{\partial\mathcal{T}_h}\}\\
&
=:\sum_{i=1}^5\mathcal{I}_i.
\end{align*}
Combining the H\"{o}lder's inequality and Lemma \ref{lemma7*} we obtain
\begin{align*}
|\mathcal{I}_1|&=\frac{1}{N}|(\mathbf{v}_{ho}\otimes\Phi_{ho},\nabla_h\mathbf{u}_{ho})-
(\mathbf{u}_{ho}\otimes\Phi_{ho},\nabla_h\mathbf{v}_{ho})|\\
&\leq\frac{1}{N}(
\|\mathbf{v}_{ho}\|_{0,4}\|\Phi_{ho}\|_{0,4}\|\nabla_h\mathbf{u}_{ho}\|_{0,2}+
\|\mathbf{u}_{ho}\|_{0,4}\|\Phi_{ho}\|_{0,4}\|\nabla_h\mathbf{v}_{ho}\|_{0,2})\\
&
\lesssim |||\Phi_h|||_V |||\mathbf{u}_h|||_V |||\mathbf{v}_h|||_V.
\end{align*}
Using the H\"{o}lder's inequality,  the inverse inequality, Lemma \ref{lemma2*} and Lemma \ref{lemma7*}, we have
\begin{align*}
|\mathcal{I}_2|&=\frac{1}{N}|\langle(\mathbf{u}_{ho}-\mathbf{u}_{hb})
\otimes(\Phi_{ho}-\Phi_{hb})
\mathbf{n},\mathbf{v}_{ho}\rangle_{\partial\mathcal{T}_h} |\\
&\leq\frac{1}{N}\sum_{K\in\mathcal{T}_h}\|\Phi_{ho}-\Phi_{hb}\|_{0,3,\partial K}
\|\mathbf{u}_{ho}-\mathbf{u}_{hb}\|_{0,2,\partial K}
\|\mathbf{v}_{ho}\|_{0,6,\partial K}\\
&\leq\frac{1}{N}\sum_{K\in\mathcal{T}_h}h_K^{-\frac{d-1}{6}}\|\Phi_{ho}-\Phi_{hb}\|_{0,2,\partial K}
\|\mathbf{u}_{ho}-\mathbf{u}_{hb}\|_{0,2,\partial K}
h_K^{-\frac{1}{6}}\|\mathbf{v}_{ho}\|_{0,6,K}\\
&\leq\frac{1}{N}\sum_{K\in\mathcal{T}_h}h_K^{-\frac{1}{2}}\|\Phi_{ho}-\Phi_{hb}\|_{0,2,\partial K}
h_K^{-\frac{1}{2}}\|\mathbf{u}_{ho}-\mathbf{u}_{hb}\|_{0,2,\partial K}
h_K^{1-\frac{d}{6}}\|\mathbf{v}_{ho}\|_{0,6,K}\\
&\lesssim |||\Phi_h|||_V |||\mathbf{u}_h|||_V |||\mathbf{v}_h|||_V,
\\
|\mathcal{I}_3|&=\frac{1}{N}|-\langle(\mathbf{u}_{ho}-\mathbf{u}_{hb})
\otimes\Phi_{ho}\mathbf{n},\mathbf{v}_{ho}\rangle_{\partial\mathcal{T}_h}|\\
&\leq\frac{1}{N}\sum_{K\in\mathcal{T}_h}\|\Phi_{ho}\|_{0,4,\partial K}
\|\mathbf{u}_{ho}-\mathbf{u}_{hb}\|_{0,2,\partial K}
\|\mathbf{v}_{ho}\|_{0,4,\partial K}\\
&\leq\frac{1}{N}\sum_{K\in\mathcal{T}_h}h_K^{-\frac{1}{4}}\|\Phi_{ho}\|_{0,4,K}
\|\mathbf{u}_{ho}-\mathbf{u}_{hb}\|_{0,2,\partial K}
h_K^{-\frac{1}{4}}\|\mathbf{v}_{ho}\|_{0,4,K}\\
&\leq\frac{1}{N}\sum_{K\in\mathcal{T}_h}\|\Phi_{ho}\|_{0,4,K}
h_K^{-\frac{1}{2}}\|\mathbf{u}_{ho}-\mathbf{u}_{hb}\|_{0,2,\partial K}
\|\mathbf{v}_{ho}\|_{0,4,K}\\
&\lesssim |||\Phi_h|||_V |||\mathbf{u}_h|||_V |||\mathbf{v}_h|||_V.
\end{align*}
Similarly, we can get
\begin{align*}
|\mathcal{I}_4|+|\mathcal{I}_5|\lesssim |||\Phi_h|||_V |||\mathbf{u}_h|||_V |||\mathbf{v}_h|||_V.
\end{align*}
Combining the above estimates yields the inequalities \eqref{Stability conditions:sub5}.

For any $\mathbf{B}_h, \mathbf{w}_h \in \mathbf{W}_h^0$, by the definition of $\nabla_{w,k}\times$ we have
\begin{align*}
\tilde{c}_{h}(\mathbf{v}_h;\mathbf{B}_h,\mathbf{w}_h)
=\frac{1}{R_m}(\nabla_h\times\mathbf{w}_{ho},\mathbf{v}_{ho}\times\mathbf{B}_{ho})
-\frac{1}{R_m}\langle(\mathbf{w}_{ho}-\mathbf{w}_{hb})
\times\mathbf{n},\mathbf{v}_{ho}\times\mathbf{B}_{ho}\rangle_{\partial\mathcal{T}_h}.
\end{align*}
Using the H\"{o}lder's inequality,   the inverse inequality, Lemmas \ref{lemma7*}, \ref{lemma12*}  and \ref{lemma2*}  again  gives
\begin{align*}
 | (\nabla_h\times\mathbf{w}_{ho},\mathbf{v}_{ho}\times\mathbf{B}_{ho})|
&\leq|\nabla_h\times\mathbf{w}_{ho}|_{0,2}|\mathbf{B}_{ho}|_{0,3}|\mathbf{v}_{ho}|_{0,6}\\
&\lesssim|||\mathbf{w}_{h}|||_W|||\mathbf{B}_{h}|||_W|||\mathbf{v}_{h}|||_V,
\\
 |\langle(\mathbf{w}_{ho}-\mathbf{w}_{hb})
\times\mathbf{n},\mathbf{v}_{ho}\times\mathbf{B}_{ho}\rangle_{\partial\mathcal{T}_h}|
&\leq \sum\limits_{K\in \mathcal{T}_h}|(\mathbf{w}_{ho}-\mathbf{w}_{hb})\times\mathbf{n}|_{0,2,\partial K}
|\mathbf{B}_{ho}|_{0,3,\partial K}
|\mathbf{v}_{ho}|_{0,6,\partial K}
\\
&\lesssim
\sum\limits_{K\in \mathcal{T}_h}|(\mathbf{w}_{ho}-\mathbf{w}_{hb})\times\mathbf{n}|_{0,2,\partial K}
h_K^{-\frac{1}{3}}|\mathbf{B}_{ho}|_{0,3,K}
h_K^{-\frac{1}{6}}|\mathbf{v}_{ho}|_{0,6,K}
\\
&\lesssim
\sum\limits_{K\in \mathcal{T}_h}h_K^{-\frac{1}{2}}|(\mathbf{w}_{ho}-\mathbf{w}_{hb})\times\mathbf{n}|_{0,2,\partial K}
|\mathbf{B}_{ho}|_{0,3,K}
|\mathbf{v}_{ho}|_{0,6,K}\\
&\lesssim
 |||\mathbf{w}_{h}|||_W|||\mathbf{B}_{h}|||_W|||\mathbf{v}_{h}|||_V.
\end{align*}
As a result, the desired inequalities \eqref{Stability conditions:sub6} follows.
\end{proof}

 We have the following inf-sup inequalities.
\begin{myLem}\label{lemma15*}
There hold
\begin{align}\label{inf-sup-bh}
&\sup_{\mathbf{v}_h\in \mathbf{V}_h^0}\frac{b_{h}(\mathbf{v}_h,q_h)}{|||\mathbf{v}_h|||_V}\gtrsim|||q_h|||_Q, \quad \forall  q_h\in   Q_h^0,\\
&\sup_{\mathbf{w}_h\in \mathbf{W}_h^0}\frac{\tilde{b}_{h}(\mathbf{w}_h,\theta_h)}{|||\mathbf{w}_h|||_W}\gtrsim|||\theta_h|||_R, \quad \forall  \theta_h \in   R_h^0.\label{inf-sup-tildebh}
\end{align}
\end{myLem}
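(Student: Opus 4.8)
The plan is to prove each inf-sup inequality by constructing, for a given $q_h\in Q_h^0$ (resp. $\theta_h\in R_h^0$), a test velocity $\mathbf{v}_h\in\mathbf{V}_h^0$ (resp. $\mathbf{w}_h\in\mathbf{W}_h^0$) that simultaneously controls the two contributions to $|||q_h|||_Q$ (resp. $|||\theta_h|||_R$). Since each target norm has the form $\big(\|\cdot\|_0^2+\sum_K h_K^2\|\nabla_{w,k}\cdot\|_{0,K}^2\big)^{1/2}$, I would treat the ``weak-gradient'' part and the ``$L^2$'' part by two separate constructions and then combine them.

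For the weak-gradient part I would test with $\mathbf{v}_h^{(1)}$ given by $\mathbf{v}_{ho}^{(1)}|_K=h_K^2\,\nabla_{w,k}q_h$ and $\mathbf{v}_{hb}^{(1)}=0$; this is admissible because $\nabla_{w,k}q_h\in[\mathcal{P}_k(K)]^d$. Then $b_{h}(\mathbf{v}_h^{(1)},q_h)=\sum_K h_K^2\|\nabla_{w,k}q_h\|_{0,K}^2$, while the bound \eqref{lemma411*:sub2} of Lemma \ref{lemma3*} together with the inverse and trace inequalities yields $|||\mathbf{v}_h^{(1)}|||_V\lesssim\big(\sum_K h_K^2\|\nabla_{w,k}q_h\|_{0,K}^2\big)^{1/2}$. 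Hence the supremum already dominates $G:=\big(\sum_K h_K^2\|\nabla_{w,k}q_h\|_{0,K}^2\big)^{1/2}$.

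For the $L^2$ part I would invoke the surjectivity of the divergence from $[H_0^1(\Omega)]^d$ onto $L_0^2(\Omega)$: since $q_{ho}\in L_0^2(\Omega)$ there is $\tilde{\mathbf{v}}\in[H_0^1(\Omega)]^d$ with $\nabla\cdot\tilde{\mathbf{v}}=q_{ho}$ and $\|\tilde{\mathbf{v}}\|_1\lesssim\|q_{ho}\|_0$. I would set $\mathbf{v}_h^{(2)}=\{\mathbf{Q}_k^o\tilde{\mathbf{v}},\mathbf{Q}_k^b\tilde{\mathbf{v}}\}$, which lies in $\mathbf{V}_h^0$ because $\tilde{\mathbf{v}}|_{\partial\Omega}=0$ forces $\mathbf{Q}_k^b\tilde{\mathbf{v}}|_{\partial\Omega}=0$, and by Lemma \ref{lemma1*} one has $|||\mathbf{v}_h^{(2)}|||_V\lesssim\|\tilde{\mathbf{v}}\|_1\lesssim\|q_{ho}\|_0$. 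Expanding $b_{h}(\mathbf{v}_h^{(2)},q_h)$ through the definition of the weak gradient and integrating by parts elementwise, the leading term produces $\pm(q_{ho},\nabla\cdot\tilde{\mathbf{v}})=\pm\|q_{ho}\|_0^2$ (the sign being fixed by replacing $\mathbf{v}_h^{(2)}$ by $-\mathbf{v}_h^{(2)}$ if necessary), and what remains are face integrals of the form $\langle q_{hb}-q_{ho},(\tilde{\mathbf{v}}-\mathbf{Q}_k^o\tilde{\mathbf{v}})\cdot\mathbf{n}\rangle_{\partial K}$. I expect the control of these remainder terms to be the main obstacle: they must be absorbed using the projection estimates of Lemma \ref{lemma1*} (which give $\|\tilde{\mathbf{v}}-\mathbf{Q}_k^o\tilde{\mathbf{v}}\|_{0,\partial K}\lesssim h_K^{1/2}|\tilde{\mathbf{v}}|_{1,K}$), the continuity of $\tilde{\mathbf{v}}$ across interior faces (so that only the projection error survives in the jump), and the integration-by-parts identity relating the face difference $q_{hb}-q_{ho}$ to $\nabla_{w,k}q_h$; the outcome should be $\big|b_{h}(\mathbf{v}_h^{(2)},q_h)\big|\gtrsim\|q_{ho}\|_0^2-C\,G\,\|q_{ho}\|_0$.

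Finally I would combine the two constructions. Writing $S:=\sup_{\mathbf{v}_h}b_{h}(\mathbf{v}_h,q_h)/|||\mathbf{v}_h|||_V$, the test function $\mathbf{v}_h^{(1)}$ gives $S\gtrsim G$, while $\mathbf{v}_h^{(2)}$ gives $S\gtrsim\|q_{ho}\|_0-CG$ after dividing by $|||\mathbf{v}_h^{(2)}|||_V$. Adding these two estimates absorbs the $G$ remainder and yields $S\gtrsim\|q_{ho}\|_0+G\gtrsim|||q_h|||_Q$, which is \eqref{inf-sup-bh}. The second inequality \eqref{inf-sup-tildebh} is proved in the same way: $\tilde{b}_{h}$ has the identical weak-gradient structure, $\theta_{ho}-\bar\theta_{ho}\in L_0^2(\Omega)$ plays the role of $q_{ho}$ (the mean value $\bar\theta_{ho}$ is subtracted precisely so that the divergence-surjectivity argument applies and the leading term closes as $\|\theta_{ho}-\bar\theta_{ho}\|_0^2$), and the resulting $\tilde{\mathbf{w}}\in[H_0^1(\Omega)]^d$ projects to an admissible $\mathbf{w}_h\in\mathbf{W}_h^0$, since $\mathbf{Q}_k^b\tilde{\mathbf{w}}|_{\partial\Omega}=0$ forces $\mathbf{w}_{hb}\times\mathbf{n}=0$ there, whose $|||\cdot|||_W$-norm is controlled by $\|\tilde{\mathbf{w}}\|_1$ through the curl bound \eqref{lemma41*:sub2} of Lemma \ref{lemma3*}.
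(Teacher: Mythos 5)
Your proposal is correct in substance but takes a genuinely different (and more self-contained) route than the paper. The paper does not prove \eqref{inf-sup-bh} at all: it simply cites \cite[Theorem 3.1]{CFX2016}, and then obtains \eqref{inf-sup-tildebh} by a short reduction — subtract the mean value $c_0$ of $\theta_{ho}$ so that $\tilde\theta_h=\{\theta_{ho}-c_0,\theta_{hb}-c_0\}\in Q_h^0$, note $\nabla_{w,k}\{c_0,c_0\}=0$, and use the inclusion $\mathbf{V}_h^0\subset\mathbf{W}_h^0$ together with $|||\mathbf{w}_h|||_W\le|||\mathbf{w}_h|||_V$ to dominate the supremum over $\mathbf{V}_h^0$ by the supremum over $\mathbf{W}_h^0$, which brings everything back to \eqref{inf-sup-bh}. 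You instead reconstruct the full inf-sup argument: the test function $\{h_K^2\nabla_{w,k}q_h,\mathbf{0}\}$ for the weighted weak-gradient part, the divergence-surjectivity lift $\tilde{\mathbf{v}}$ with $\nabla\cdot\tilde{\mathbf{v}}=q_{ho}$ projected into $\mathbf{V}_h^0$ for the $L^2$ part, and the absorption of the cross term. This is essentially the proof inside the cited reference, so what you buy is independence from \cite{CFX2016}, at the cost of one technical step you only sketch: the bound $\|q_{ho}-q_{hb}\|_{0,\partial K}\lesssim h_K^{1/2}\|\nabla_{w,k}q_h\|_{0,K}$ needed to control the face remainders $\langle q_{ho}-q_{hb},(\tilde{\mathbf{v}}-\mathbf{Q}_k^o\tilde{\mathbf{v}})\cdot\mathbf{n}\rangle_{\partial K}$. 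That bound does hold — take $\boldsymbol{\phi}$ in a BDM/RT-type lifting with $\boldsymbol{\phi}\cdot\mathbf{n}|_{\partial K}=q_{hb}-q_{ho}$, vanishing interior moments against $\nabla\mathcal{P}_{k-1}(K)$, and $\|\boldsymbol{\phi}\|_{0,K}\lesssim h_K^{1/2}\|q_{hb}-q_{ho}\|_{0,\partial K}$, then test the weak-gradient identity with $\boldsymbol{\phi}$ — but as written it is the one load-bearing step your argument leaves implicit. Your treatment of \eqref{inf-sup-tildebh} is also sound (the mean subtraction makes $(\theta_{ho},\nabla\cdot\tilde{\mathbf{w}})=\|\theta_{ho}-\bar\theta_{ho}\|_0^2$, and \eqref{lemma41*:sub2} controls $|||\cdot|||_W$ by $\|\tilde{\mathbf{w}}\|_1$), though the paper's reduction via $|||\cdot|||_W\le|||\cdot|||_V$ gets the same conclusion with far less work once the first inequality is in hand.
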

\begin{proof}
The first inequality follows from \cite[Theorem 3.1]{CFX2016}.

For any $\theta_h=\{\theta_{ho},\theta_{hb}\} \in   R_h^0$, let $c_0:=\frac{1}{|\Omega|}\int_\Omega\theta_{ho}  d\mathbf{x}$ and $\tilde\theta_h:=\{\theta_{ho}-c_0,\theta_{hb}-c_0\}\in Q_h^0$. Then,
according to the definition of $\tilde{b}(\cdot,\cdot)$,   (\ref{lemma41*:sub2}), the relation $\nabla_{w,k}\{c_0,c_0\}=0$, $ \mathbf{W}_h^0\supset  \mathbf{V}_h^0$, the inequality $|||\mathbf{w}_h|||_W\leq |||\mathbf{w}_h|||_V$ and \eqref{inf-sup-bh}, we   get
\begin{align*}
\sup_{\mathbf{w}_h\in \mathbf{W}_h^0}\frac{\tilde{b}_{h}(\mathbf{w}_h,\theta_h)}{|||\mathbf{w}_h|||_W}
&=\sup_{\mathbf{w}_h\in \mathbf{W}_h^0}
\frac{(\mathbf{w}_{ho},\nabla_{w,k}\theta_h)}{|||\mathbf{w}_h|||_W}\\
&=\sup_{\mathbf{w}_h\in \mathbf{W}_h^0}
\frac{(\mathbf{w}_{ho},\nabla_{w,k}\tilde\theta_h)+(\mathbf{w}_{ho},\nabla_{w,k}\{c_0,c_0\})}{|||\mathbf{w}_h|||_W}\\
&\geq \sup_{\mathbf{w}_h\in \mathbf{V}_h^0}
\frac{(\mathbf{w}_{ho},\nabla_{w,k}\tilde\theta_h)}{|||\mathbf{w}_h|||_V}\\
&\gtrsim |||\tilde\theta_h|||_Q=\left(\|\theta_{ho}-c_0\|_0^2+\sum_{K\in\mathcal{T}_h}h_K^2\|\nabla_{w,k}\theta_h\|_{0,K}^2\right)^{1/2}.
\end{align*}
Taking $\mathbf{w}_h= \{\mathbf{w}_{ho},\mathbf{w}_{hb}\}=\{\nabla_{w,k}\theta_h,\mathbf{0}\}$ in this relation and using  Lemma \ref{lemma2*}, we further obtain
\begin{align*}
\sup_{\mathbf{w}_h\in \mathbf{W}_h^0}\frac{\tilde{b}_{h}(\mathbf{w}_h,\theta_h)}{|||\mathbf{w}_h|||_W}
&\gtrsim 
\frac{(\nabla_{w,k}\theta_h,\nabla_{w,k}\theta_h)}{||\nabla_h\times\nabla_{w,k}\theta_h||_0
+h^{-1/2}||\nabla_{w,k}\theta_h\times\mathbf{n}||_{0,\partial\mathcal{T}_h}}\\
&\gtrsim  
\frac{||\nabla_{w,k}\theta_h||_0^2}{h^{-1}||\nabla_{w,k}\theta_h||_0}\\
&\gtrsim |||\theta_h|||_R,
\end{align*}
i.e. (\ref{inf-sup-tildebh}) holds.
This completes the proof.
\end{proof}

\subsection{Existence and uniqueness results}

For the discretization problems   (\ref{scheme01*}) and (\ref{3c1*}), we readily have   the following equivalence result.

\begin{myLem}\label{lemma15a*}

The  problems (\ref{scheme01*}) and (\ref{3c1*}) are equivalent in the sense that both (I) and
(II) hold:
\begin{itemize}
\item[(I)] If $(\mathbf{u}_h,\mathbf{B}_h,p_h,r_h)
\in\mathbf{V}_h^0\times\mathbf{W}_h^0\times Q_h^0\times R_h^0$ is the solution to the problem (\ref{scheme01*}), then $\mathbf{u}_h$ and $\mathbf{B}_h$  solve the problem (\ref{3c1*});

\item[(II)] If $\mathbf{u}_h \in \mathbf{\bar{V}}_h $ and $\mathbf{B}_h
\in \mathbf{\bar{W}}_h$ solve the problem (\ref{3c1*}), then
$(\mathbf{u}_h,\mathbf{B}_h,p_h,r_h)$ is   a solution to the problem (\ref{scheme01*}), where $ p_h\in Q_h^0 $ and $r_h\in R_h^0$ are determined by
\begin{align}\label{aux-ph}
b_{h}(\mathbf{v}_h,p_h)&=(\mathbf{f},\mathbf{v}_{ho})-a_{h}(\mathbf{u}_h,\mathbf{v}_h)
-c_{h}(\mathbf{u}_h;\mathbf{u}_h,\mathbf{v}_h)-\tilde{c}_{h}(\mathbf{v}_h;\mathbf{B}_h,\mathbf{B}_h), \ \ \ \
\forall\mathbf{v}_h\in\mathbf{V}_h^0,\\
\label{aux-rh}
\tilde{b}_{h}(\mathbf{w}_h,r_h)&=\frac{1}{R_m}(\mathbf{g},\mathbf{w}_{ho})-\tilde{a}_{h}(\mathbf{B}_h,\mathbf{w}_h)
+\tilde{c}_{h}(\mathbf{u}_h;\mathbf{B}_h,\mathbf{w}_h), \ \ \ \ \ \ \ \ \ \ \ \ \ \ \ \ \ \
\forall\mathbf{w}_h\in\mathbf{W}_h^0.
\end{align}
\end{itemize}
\end{myLem}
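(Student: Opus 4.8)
The plan is to exploit the split form (\ref{scheme0101*}) of the scheme (\ref{scheme01*}) together with the two inf-sup conditions of Lemma \ref{lemma15*}. The whole argument is a standard saddle-point reduction, so the only genuine content is the compatibility verification needed in Part (II); everything else is bookkeeping that tracks which test functions annihilate which terms.

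For Part (I), I would start from the fact, already noted in the text, that (\ref{scheme01*}) is equivalent to the system (\ref{scheme0101*}). Its two constraint equations (\ref{scheme0101*-b}) and (\ref{scheme0101*-d}) read $b_{h}(\mathbf{u}_h,q_h)=0$ for all $q_h\in Q_h^0$ and $\tilde b_{h}(\mathbf{B}_h,\theta_h)=0$ for all $\theta_h\in R_h^0$, which are exactly the defining conditions of the subspaces $\mathbf{\bar V}_h$ and $\mathbf{\bar W}_h$; hence $\mathbf{u}_h\in\mathbf{\bar V}_h$ and $\mathbf{B}_h\in\mathbf{\bar W}_h$. I would then restrict the test functions in (\ref{scheme01*}) to $\mathbf{v}_h\in\mathbf{\bar V}_h$, $\mathbf{w}_h\in\mathbf{\bar W}_h$ and $q_h=0$, $\theta_h=0$. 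For such test functions $b_{h}(\mathbf{v}_h,p_h)=0$ and $\tilde b_{h}(\mathbf{w}_h,r_h)=0$ by definition of the constrained spaces, so every pressure- and pseudo-pressure-related term drops out, and what remains is precisely (\ref{3c1*}).

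For Part (II), suppose $\mathbf{u}_h\in\mathbf{\bar V}_h$ and $\mathbf{B}_h\in\mathbf{\bar W}_h$ solve (\ref{3c1*}). I would introduce the two linear functionals given by the right-hand sides of (\ref{aux-ph}) and (\ref{aux-rh}),
\[
F(\mathbf{v}_h):=(\mathbf{f},\mathbf{v}_{ho})-a_{h}(\mathbf{u}_h,\mathbf{v}_h)-c_{h}(\mathbf{u}_h;\mathbf{u}_h,\mathbf{v}_h)-\tilde c_{h}(\mathbf{v}_h;\mathbf{B}_h,\mathbf{B}_h),\quad \mathbf{v}_h\in\mathbf{V}_h^0,
\]
\[
\tilde F(\mathbf{w}_h):=\tfrac{1}{R_m}(\mathbf{g},\mathbf{w}_{ho})-\tilde a_{h}(\mathbf{B}_h,\mathbf{w}_h)+\tilde c_{h}(\mathbf{u}_h;\mathbf{B}_h,\mathbf{w}_h),\quad \mathbf{w}_h\in\mathbf{W}_h^0.
\]
The inf-sup inequalities (\ref{inf-sup-bh}) and (\ref{inf-sup-tildebh}) yield a unique $p_h\in Q_h^0$ with $b_{h}(\mathbf{v}_h,p_h)=F(\mathbf{v}_h)$ for all $\mathbf{v}_h\in\mathbf{V}_h^0$, and a unique $r_h\in R_h^0$ with $\tilde b_{h}(\mathbf{w}_h,r_h)=\tilde F(\mathbf{w}_h)$ for all $\mathbf{w}_h\in\mathbf{W}_h^0$, provided $F$ vanishes on $\mathbf{\bar V}_h$ and $\tilde F$ vanishes on $\mathbf{\bar W}_h$; this is the solvability (compatibility) condition of the saddle-point problem.

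The main obstacle — really the only non-bookkeeping point — is checking these two compatibility conditions, and this is exactly what (\ref{3c1*}) delivers. Testing (\ref{3c1*}) with $(\mathbf{v}_h,\mathbf{0})$ for $\mathbf{v}_h\in\mathbf{\bar V}_h$ makes all $\mathbf{w}_h$-terms vanish and gives $F(\mathbf{v}_h)=0$; testing with $(\mathbf{0},\mathbf{w}_h)$ for $\mathbf{w}_h\in\mathbf{\bar W}_h$ makes all $\mathbf{v}_h$-terms vanish and gives $\tilde F(\mathbf{w}_h)=0$. With $p_h,r_h$ so obtained I would finally reassemble: (\ref{aux-ph}) and (\ref{aux-rh}) are, after rearrangement, exactly (\ref{scheme0101*-a}) and (\ref{scheme0101*-c}), while (\ref{scheme0101*-b}) and (\ref{scheme0101*-d}) hold because $\mathbf{u}_h\in\mathbf{\bar V}_h$ and $\mathbf{B}_h\in\mathbf{\bar W}_h$. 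Adding the four relations recovers (\ref{scheme01*}), so $(\mathbf{u}_h,\mathbf{B}_h,p_h,r_h)$ solves it, completing Part (II).
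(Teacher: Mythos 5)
Your argument is correct and is exactly the routine saddle-point reduction the paper has in mind when it states this lemma without proof ("we readily have"): Part (I) follows from the split system \eqref{scheme0101*} and the definitions of $\mathbf{\bar V}_h,\mathbf{\bar W}_h$, and Part (II) rests on the inf-sup conditions of Lemma \ref{lemma15*} together with the compatibility of the functionals $F,\tilde F$ on the constrained subspaces, which is precisely how the paper later justifies the solvability of \eqref{aux-ph}--\eqref{aux-rh} in Theorem \ref{main-results11*}. No gaps.
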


From Lemma \ref{lemma13*} it is  easy to know that   $M_{h}$   and $\tilde{M}_{h}$ are bounded and depend on  the parameters $N$ and  $R_m$, respectively.


\begin{myLem}\label{results11*}
The problem (\ref{3c1*}) admits at least one solution $(\mathbf{u}_h,\mathbf{B}_h)
\in\mathbf{\bar{V}}_h\times\mathbf{\bar{W}}_h$. In addition, there holds 
\begin{align}\label{3c41*}
|||\mathbf{u}_h|||_V+|||\mathbf{B}_h|||_W\leq
2\zeta\left(H_a\|\mathbf{f}\|_{h}+ \|\mathbf{g}\|_{\tilde{h}}\right),
\end{align}
where
\begin{align}\label{3c41*-cons}
&\zeta:=\max\{H_a,R_m\},\\
&\|\mathbf{f}\|_h:=\sup_{\mathbf{0}\neq\mathbf{v}_h\in \mathbf{\bar{V}}_h}\frac{(\mathbf{f},\mathbf{v}_{ho})}{|||\mathbf{v}_h|||_V},\quad \|\mathbf{g}\|_{\tilde{h}}:=\sup_{\mathbf{0}\neq\mathbf{w}_h\in \mathbf{\bar{W}}_h}\frac{(\mathbf{g},\mathbf{w}_{ho})}{|||\mathbf{w}_h|||_W}
\end{align}
\end{myLem}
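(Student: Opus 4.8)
The plan is to prove existence by the standard finite-dimensional corollary of Brouwer's fixed point theorem and then to read off the bound \eqref{3c41*} directly from the radius furnished by that argument. First I would equip the finite-dimensional space $X:=\mathbf{\bar V}_h\times\mathbf{\bar W}_h$ with the energy inner product
\[
\big((\mathbf{u}_h,\mathbf{B}_h),(\mathbf{v}_h,\mathbf{w}_h)\big)_X:=a_{h}(\mathbf{u}_h,\mathbf{v}_h)+\tilde{a}_{h}(\mathbf{B}_h,\mathbf{w}_h).
\]
By symmetry together with \eqref{Stability conditions:sub2} and \eqref{Stability conditions:sub4}, and since $|||\cdot|||_V$ is a norm on $\mathbf{V}_h^0$ while $|||\cdot|||_W$ is a norm on $\mathbf{\bar W}_h$ (Lemma \ref{lemma7w*}), this is a genuine inner product with $\|(\mathbf{u}_h,\mathbf{B}_h)\|_X^2=\frac{1}{H_a^2}|||\mathbf{u}_h|||_V^2+\frac{1}{R_m^2}|||\mathbf{B}_h|||_W^2$. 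I would then define $P:X\to X$ through Riesz representation so that $(P(\xi),\eta)_X$ equals the left-hand side of \eqref{3c1*} minus $(\mathbf{f},\mathbf{v}_{ho})+\frac{1}{R_m}(\mathbf{g},\mathbf{w}_{ho})$, where $\xi=(\mathbf{u}_h,\mathbf{B}_h)$ and $\eta=(\mathbf{v}_h,\mathbf{w}_h)$. Continuity of $P$ follows from boundedness of all the forms (Lemma \ref{lemma13*}), and by construction a zero of $P$ is precisely a solution of \eqref{3c1*}.

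The decisive step is to test with $\eta=\xi$. Here the nonlinear contributions disappear: $c_{h}(\mathbf{u}_h;\mathbf{u}_h,\mathbf{u}_h)=0$ by \eqref{Stability conditions:sub4-1}, and the magnetic coupling terms cancel, since $\tilde{c}_{h}(\mathbf{u}_h;\mathbf{B}_h,\mathbf{B}_h)-\tilde{c}_{h}(\mathbf{u}_h;\mathbf{B}_h,\mathbf{B}_h)=0$. Invoking \eqref{Stability conditions:sub2}, \eqref{Stability conditions:sub4} and the definitions of $\|\mathbf{f}\|_h,\|\mathbf{g}\|_{\tilde{h}}$, and then a Cauchy--Schwarz inequality applied to the pair $\big(\tfrac{1}{H_a}|||\mathbf{u}_h|||_V,\tfrac{1}{R_m}|||\mathbf{B}_h|||_W\big)$ against $\big(H_a\|\mathbf{f}\|_h,\|\mathbf{g}\|_{\tilde{h}}\big)$, I would obtain
\[
(P(\xi),\xi)_X\ \ge\ \|\xi\|_X^2-\sqrt{H_a^2\|\mathbf{f}\|_h^2+\|\mathbf{g}\|_{\tilde{h}}^2}\,\|\xi\|_X.
\]

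Setting $\rho:=\sqrt{H_a^2\|\mathbf{f}\|_h^2+\|\mathbf{g}\|_{\tilde{h}}^2}$, the right-hand side is nonnegative on the sphere $\|\xi\|_X=\rho$, so the Brouwer corollary produces a zero $\xi^\ast=(\mathbf{u}_h,\mathbf{B}_h)$ of $P$ with $\|\xi^\ast\|_X\le\rho$, that is, a solution of \eqref{3c1*}. Finally, $\|\xi^\ast\|_X\le\rho$ reads $\frac{1}{H_a^2}|||\mathbf{u}_h|||_V^2+\frac{1}{R_m^2}|||\mathbf{B}_h|||_W^2\le H_a^2\|\mathbf{f}\|_h^2+\|\mathbf{g}\|_{\tilde{h}}^2$; because $\zeta=\max\{H_a,R_m\}$ dominates both $H_a$ and $R_m$, each left-hand term controls the corresponding norm, giving $|||\mathbf{u}_h|||_V,|||\mathbf{B}_h|||_W\le\zeta\rho$, and then $\sqrt{p^2+q^2}\le p+q$ for $p,q\ge0$ turns $2\zeta\rho$ into the claimed right-hand side $2\zeta\big(H_a\|\mathbf{f}\|_h+\|\mathbf{g}\|_{\tilde{h}}\big)$.

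I expect the main obstacle to be structural rather than computational. One must confirm that the chosen forms really constitute an inner product on the constrained space $X$, which is exactly where Lemma \ref{lemma7w*} is indispensable: $|||\cdot|||_W$ is only a semi-norm on $\mathbf{W}_h^0$ and becomes positive definite precisely on $\mathbf{\bar W}_h$. The second delicate point is the cancellation of the two $\tilde{c}_h$ coupling terms upon testing with $\xi$; it is this cancellation, together with $c_h(\mathbf{u}_h;\mathbf{u}_h,\mathbf{u}_h)=0$, that lets the energy estimate close \emph{unconditionally}, so that existence holds with no smallness assumption on the data (only uniqueness, treated separately, would require one).
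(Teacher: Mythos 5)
Your proposal is correct, and the existence part takes a genuinely different (and more elementary) route than the paper. For the a priori bound both arguments are essentially identical: test \eqref{3c1*} with $(\mathbf{u}_h,\mathbf{B}_h)$ itself, use $c_h(\mathbf{u}_h;\mathbf{u}_h,\mathbf{u}_h)=0$ and the exact cancellation of the two $\tilde c_h$ coupling terms, bound the data terms by $\|\mathbf{f}\|_h,\|\mathbf{g}\|_{\tilde h}$, and absorb; your Cauchy--Schwarz pairing yields the same intermediate inequality $\tfrac{1}{H_a^2}|||\mathbf{u}_h|||_V^2+\tfrac{1}{R_m^2}|||\mathbf{B}_h|||_W^2\le H_a^2\|\mathbf{f}\|_h^2+\|\mathbf{g}\|_{\tilde h}^2$ that the paper reaches via Young's inequality, and both conclude with the factor $2\zeta$. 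Where you diverge is existence: the paper defines a solution operator $\mathbb{A}$ by inverting the linear elliptic part, proves a Lipschitz-type estimate to get continuity and compactness (invoking Arzel\`a--Ascoli), and applies the Leray--Schauder principle, with the boundedness of the $\lambda$-solution sets supplied by the same energy estimate; you instead encode the whole residual as a continuous map $P$ on the finite-dimensional space $X=\mathbf{\bar V}_h\times\mathbf{\bar W}_h$ equipped with the energy inner product and apply the standard corollary of Brouwer's fixed point theorem ($(P(\xi),\xi)_X\ge 0$ on a sphere implies a zero inside). Your route buys simplicity: in finite dimensions the compactness machinery is superfluous, you need only continuity of $P$ (automatic, since the forms are polynomial in $\xi$) rather than the estimate \eqref{ineq1}, and the radius $\rho$ of the Brouwer ball directly delivers \eqref{3c41*} for the solution you construct. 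Two small points worth making explicit: you correctly identify that Lemma \ref{lemma7w*} is what makes $(\cdot,\cdot)_X$ positive definite on the constrained space; and since your inequality $(P(\xi),\xi)_X\ge\|\xi\|_X^2-\rho\|\xi\|_X$ holds for \emph{every} $\xi$, any zero of $P$ (not just the one Brouwer produces) satisfies $\|\xi\|_X\le\rho$, so the bound \eqref{3c41*} holds for every solution of \eqref{3c1*}, matching the scope of the paper's statement.
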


\begin{proof}

Taking $\mathbf{v}_h=\mathbf{u}_h$ and $\mathbf{w}_h=\mathbf{B}_h$ in \eqref{3c1*},   by Lemma \ref{lemma13*}  we have
\begin{align*}
\frac{1}{H_a^2}|||\mathbf{u}_h|||_V^2+\frac{1}{R_m^2}|||\mathbf{B}_h|||_W^2=
(\mathbf{f},\mathbf{u}_h)+\frac{1}{R_m}(\mathbf{g},\mathbf{B}_h)
\leq\|\mathbf{f}\|_{h}|||\mathbf{u}_h|||_V+\frac{1}{R_m}\|\mathbf{g}\|_{\tilde{h}}|||\mathbf{B}_h|||_W,
\end{align*}
which yields
%
\begin{align*}
\frac12\min\{\frac{1}{H_a^2},\frac{1}{R_m^2}\}\left(|||\mathbf{u}_h|||_V+|||\mathbf{B}_h|||_W\right)^2
\leq &\min\{\frac{1}{H_a^2},\frac{1}{R_m^2}\}\left(|||\mathbf{u}_h|||_V^2+|||\mathbf{B}_h|||_W^2\right)\\
\leq &\frac{1}{H_a^2}|||\mathbf{u}_h|||_V^2+\frac{1}{R_m^2}|||\mathbf{B}_h|||_W^2\\
\leq &
H_a^2\|\mathbf{f}\|_{h}^2+ \|\mathbf{g}\|_{\tilde{h}}^2\\
\leq & (H_a \|\mathbf{f}\|_{h} + \|\mathbf{g}\|_{\tilde{h}})^2.
\end{align*}
Thus, we get the boundedness result \eqref{3c41*}.

Introduce a mapping $\mathbb{A}:\mathbf{\bar{V}}_h\times\mathbf{\bar{W}}_h\rightarrow
\mathbf{\bar{V}}_h\times\mathbf{\bar{W}}_h$, defined by $\mathbb{A}(\mathbf{u}_h,\mathbf{B}_h)=
(\mathbf{w}_{u},\mathbf{w}_{B})$, where $(\mathbf{w}_{u},\mathbf{w}_{B})\in\mathbf{\bar{V}}_h\times\mathbf{\bar{W}}_h$ is given by
\begin{align}\label{3c51*}
&a_{h}(\mathbf{w}_{u},\mathbf{v}_h)
+\tilde{a}_{h}(\mathbf{w}_{B},\mathbf{w}_h)\nonumber\\
=&(\mathbf{f},\mathbf{v}_{ho})+\frac{1}{R_m}(\mathbf{g},\mathbf{w}_{ho})
-c_{h}(\mathbf{u}_h;\mathbf{u}_h,\mathbf{v}_h)-\tilde{c}_{h}(\mathbf{v}_h;\mathbf{B}_h,\mathbf{B}_h)
+\tilde{c}_{h}(\mathbf{u}_h;\mathbf{B}_h,\mathbf{w}_h),\
\forall(\mathbf{v}_h,\mathbf{w}_h)
\in\mathbf{\bar{V}}_h\times\mathbf{\bar{W}}_h.
\end{align}

Clearly $(\mathbf{u}_h,\mathbf{B}_h)$
is a solution to \eqref{3c1*} if it is a solution to
\begin{align}\label{newsys}
\mathbb{A}(\mathbf{u}_h,\mathbf{B}_h)=
(\mathbf{u}_{h},\mathbf{B}_{h}).
\end{align}
In order to show the system \eqref{newsys} has a solution,
from the Leray-Schauder's principle it suffices to prove the following two assertions:
\begin{itemize}
\item[(i)] $\mathbb{A}$ is a continuous and compact mapping;
\item[(ii)] For any $0\leq \lambda\leq 1$, the set $\mathbf{\bar{V}}_{\lambda,h}\times
\mathbf{\bar{W}}_{\lambda,h}:=\{(\mathbf{v}_h,\mathbf{w}_h)\in \mathbf{\bar{V}}_h\times\mathbf{\bar{W}}_h    : \ (\mathbf{v}_h,\mathbf{w}_h)=\lambda\mathbb{A}(\mathbf{v}_h,\mathbf{w}_h)\}$ is bounded.
\end{itemize}

In fact, let $\mathbf{u}_{1h},\mathbf{u}_{2h}\in\mathbf{\bar{V}}_h$, $\mathbf{B}_{1h},\mathbf{B}_{2h}\in\mathbf{\bar{W}}_h$, and set
$\mathbb{A}(\mathbf{u}_{1h},\mathbf{B}_{1h})=(\mathbf{w}_{1u},\mathbf{w}_{1B})$ and
$\mathbb{A}(\mathbf{u}_{2h},\mathbf{B}_{2h})=(\mathbf{w}_{2u},\mathbf{w}_{2B})$, then we have
\begin{align}
&a_{h}(\mathbf{w}_{1u},\mathbf{v}_h)
+\tilde{a}_{h}(\mathbf{w}_{1B},\mathbf{w}_h)
+c_{h}(\mathbf{u}_{1h};\mathbf{u}_{1h},\mathbf{v}_h)
+\tilde{c}_{h}(\mathbf{v}_{1h};\mathbf{B}_{1h},\mathbf{B}_{1h})-
\tilde{c}_{h}(\mathbf{u}_{1h};\mathbf{B}_{1h},\mathbf{w}_h)\nonumber\\
=&(\mathbf{f},\mathbf{v}_{ho})+\frac{1}{R_m}(\mathbf{g},\mathbf{w}_{ho}), \label{3c61*}\\
\label{3c71*}
&a_{h}(\mathbf{w}_{2u},\mathbf{v}_h)
+\tilde{a}_{h}(\mathbf{w}_{2B},\mathbf{w}_h)v
+c_{h}(\mathbf{u}_{2h};\mathbf{u}_{2h},\mathbf{v}_h)
+\tilde{c}_{h}(\mathbf{v}_{2h};\mathbf{B}_{2h},\mathbf{B}_{2h})-
\tilde{c}_{h}(\mathbf{u}_{2h};\mathbf{B}_{2h},\mathbf{w}_h)\nonumber\\
=&(\mathbf{f},\mathbf{v}_{ho})+\frac{1}{R_m}(\mathbf{g},\mathbf{w}_{ho}),
\end{align}
for all $(\mathbf{v}_h,\mathbf{w}_h)
\in\mathbf{\bar{V}}_h\times\mathbf{\bar{W}}_h.$
Subtracting (\ref{3c71*}) from (\ref{3c61*}), and taking $\mathbf{v}_h=\mathbf{w}_{1u}-\mathbf{w}_{2u}$,
$\mathbf{w}_h=\mathbf{w}_{1B}-\mathbf{w}_{2B}$, we obtain
\begin{align*}
&a_{h}(\mathbf{w}_{1u}-\mathbf{w}_{2u},\mathbf{w}_{1u}-\mathbf{w}_{2u})+
\tilde{a}_{h}(\mathbf{w}_{1B}-\mathbf{w}_{2B},\mathbf{w}_{1B}-\mathbf{w}_{2B})\\
=&-c_{h}(\mathbf{u}_{1h};\mathbf{u}_{1h}-\mathbf{u}_{2h},\mathbf{w}_{1u}-\mathbf{w}_{2u})
-c_{h}(\mathbf{u}_{1h}-\mathbf{u}_{2h};\mathbf{u}_{2h},\mathbf{w}_{1u}-\mathbf{w}_{2u})\\
&-\tilde{c}_{h}(\mathbf{w}_{1u}-\mathbf{w}_{2u};\mathbf{B}_{1h}-\mathbf{B}_{2h};\mathbf{B}_{1h})
-\tilde{c}_{h}(\mathbf{w}_{1u}-\mathbf{w}_{2u};\mathbf{B}_{2h},\mathbf{B}_{1h}-\mathbf{B}_{2h})\\
&+\tilde{c}_{h}(\mathbf{u}_{1h};\mathbf{B}_{1h}-\mathbf{B}_{2h},\mathbf{w}_{1B}-\mathbf{w}_{2B})
+\tilde{c}_{h}(\mathbf{u}_{1h}-\mathbf{u}_{2h};\mathbf{B}_{2h},\mathbf{w}_{1B}-\mathbf{w}_{2B}),
\end{align*}
which, together with  Lemma \ref{lemma13*}, leads to 
\begin{align*}
&\ \frac{1}{H_a^2}|||\mathbf{w}_{1u}-\mathbf{w}_{2u}|||_V^2+
\frac{1}{R_m^2}|||\mathbf{w}_{1B}-\mathbf{w}_{2B}|||_W^2\\
\leq&\ M_{h}\left(|||\mathbf{u}_{1h}|||_V+|||\mathbf{u}_{2h}|||_V\right)
|||\mathbf{u}_{1h}-\mathbf{u}_{2h}|||_V|||\mathbf{w}_{1u}-\mathbf{w}_{2u}|||_V\\
&\ +\tilde{M}_{h}\left(|||\mathbf{B}_{1h}|||_W+|||\mathbf{B}_{2h}|||_W\right)
|||\mathbf{B}_{1h}-\mathbf{B}_{2h}|||_W|||\mathbf{w}_{1u}-\mathbf{w}_{2u}|||_V\\
&\ +\tilde{M}_{h}
\left(|||\mathbf{u}_{1h}|||_V|||\mathbf{B}_{1h}-\mathbf{B}_{2h}|||_W
+|||\mathbf{B}_{2h}|||_W|||\mathbf{u}_{1h}-\mathbf{u}_{2h}|||_V\right)|||\mathbf{w}_{1B}-\mathbf{w}_{2B}|||_W.
\end{align*}
This estimate plus \eqref{3c41*}   yields
\begin{align}\label{ineq1}
&\ |||\mathbf{w}_{1u}-\mathbf{w}_{2u}|||_V+
|||\mathbf{w}_{1B}-\mathbf{w}_{2B}|||_W\nonumber\\
\leq&\ 2\zeta \left[H_a M_{h}\left(|||\mathbf{u}_{1h}|||_V+|||\mathbf{u}_{2h}|||_W\right)
|||\mathbf{u}_{1h}-\mathbf{u}_{2h}|||_V
+H_a\tilde{M}_{h}\left(|||\mathbf{B}_{1h}|||_W+|||\mathbf{B}_{2h}|||_W\right)
|||\mathbf{B}_{1h}-\mathbf{B}_{2h}|||_W\right.\nonumber\\
&\quad \left. +R_m\tilde{M}_{h}
\left(|||\mathbf{u}_{1h}|||_V|||\mathbf{B}_{1h}-\mathbf{B}_{2h}|||_W
+|||\mathbf{B}_{2h}|||_W|||\mathbf{u}_{1h}-\mathbf{u}_{2h}|||_V\right)\right]\nonumber\\
\leq & \ 4\zeta^2 \left(H_a\|\mathbf{f}\|_{h}+\|\mathbf{g}\|_{\tilde{h}}\right) \left[(2H_a M_{h} +R_m\tilde{M}_{h})
|||\mathbf{u}_{1h}-\mathbf{u}_{2h}|||_V
+(2H_a\tilde{M}_{h}+R_m\tilde{M}_{h})
|||\mathbf{B}_{1h}-\mathbf{B}_{2h}|||_W\right]\nonumber\\
\leq & \ 12 \zeta^3 \max\{M_{h}, \tilde{M}_{h}\}\left(H_a\|\mathbf{f}\|_{h}+\|\mathbf{g}\|_{\tilde{h}}\right) \left( 
|||\mathbf{u}_{1h}-\mathbf{u}_{2h}|||_V
+
|||\mathbf{B}_{1h}-\mathbf{B}_{2h}|||_W\right).
\end{align}
This result implies that $\mathbb{A}$ is equicontinuous and uniformly bounded,
since
$$
\mathbb{A}(\mathbf{u}_{1h},\mathbf{B}_{1h})-
\mathbb{A}(\mathbf{u}_{2h},\mathbf{B}_{2h}) =
(\mathbf{w}_{1u}-\mathbf{w}_{2u},\mathbf{w}_{1B}-\mathbf{w}_{2B}).$$
Thus, $\mathbb{A}$ is compact by the
Arzel\'{a}-Ascoli theorem \cite{B2010}, and (i) holds.

The work left is to show (ii). If $\lambda=0$, then $\mathbf{\bar{V}}_{\lambda,h}\times\mathbf{\bar{W}}_{\lambda,h}=\{(0,0)\}$. For $\lambda\in(0,1]$ and $(\mathbf{v}_u,\mathbf{v}_B)
\in\mathbf{\bar{V}}_{\lambda,h}\times\mathbf{\bar{W}}_{\lambda,h}$, using (\ref{3c51*}) we have
\begin{align*}
\lambda^{-1}(a_{h}(\mathbf{v}_{u},\mathbf{v}_h)
+\tilde{a}_{h}(\mathbf{v}_{B},\mathbf{w}_h))
+c_{h}(\mathbf{v}_{u};\mathbf{v}_{u},\mathbf{v}_h)
+\tilde{c}_{h}(\mathbf{v}_h;\mathbf{v}_{B},\mathbf{v}_{B})-
\tilde{c}_{h}(\mathbf{v}_{u};\mathbf{v}_{B};\mathbf{w}_h)
=(\mathbf{f},\mathbf{v}_h)+\frac{1}{R_m}(\mathbf{g},\mathbf{w}_h).
\end{align*}
Similar to \eqref{3c41*}, there holds
 \begin{align*}
|||\mathbf{v}_{u}|||_V+|||\mathbf{v}_{B}|||_W
\leq2\zeta\lambda(H_a\|\mathbf{f}\|_{h}+\|\mathbf{g}\|_{\tilde{h}}).
\end{align*}
As a result,  (ii) holds. This completes the proof.
\end{proof}

Denote
$$\delta:=12\zeta^3
\max\{M_{h},\tilde{M}_{h}\}=12\max\{H_a,R_m\}^3
\max\{M_{h},\tilde{M}_{h}\},$$
and we have   the following  uniqueness result.
\begin{myLem}\label{results211*}
Under the smallness condition that
\begin{align}\label{3c81*}
\delta\left(H_a\|\mathbf{f}\|_{h}+\|\mathbf{g}\|_{\tilde{h}}\right)<1,
\end{align}
  the problem (\ref{3c1*}) admits a unique solution $(\mathbf{u}_h,\mathbf{B}_h)
\in\mathbf{\bar{V}}_h\times\mathbf{\bar{W}}_h$.
\end{myLem}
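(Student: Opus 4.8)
The plan is to prove uniqueness by a \emph{contraction} argument that exploits the machinery already assembled in Lemma \ref{results11*}. Suppose $(\mathbf{u}_{1h},\mathbf{B}_{1h})$ and $(\mathbf{u}_{2h},\mathbf{B}_{2h})$ in $\mathbf{\bar{V}}_h\times\mathbf{\bar{W}}_h$ are two solutions of \eqref{3c1*}. By Lemma \ref{results11*} each of them obeys the a priori bound \eqref{3c41*}, so $|||\mathbf{u}_{ih}|||_V$ and $|||\mathbf{B}_{ih}|||_W$ ($i=1,2$) are all controlled by $\zeta\left(H_a\|\mathbf{f}\|_{h}+\|\mathbf{g}\|_{\tilde{h}}\right)$. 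Moreover, by construction each solution is a fixed point of the mapping $\mathbb{A}$ introduced in the proof of Lemma \ref{results11*}, that is, $\mathbb{A}(\mathbf{u}_{ih},\mathbf{B}_{ih})=(\mathbf{u}_{ih},\mathbf{B}_{ih})$.

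The heart of the argument is to subtract the two instances of \eqref{3c1*} and test with $\mathbf{v}_h=\mathbf{u}_{1h}-\mathbf{u}_{2h}$ and $\mathbf{w}_h=\mathbf{B}_{1h}-\mathbf{B}_{2h}$. The nonlinear terms are handled by the two-slot splitting, for example
$c_{h}(\mathbf{u}_{1h};\mathbf{u}_{1h},\cdot)-c_{h}(\mathbf{u}_{2h};\mathbf{u}_{2h},\cdot)
=c_{h}(\mathbf{u}_{1h};\mathbf{u}_{1h}-\mathbf{u}_{2h},\cdot)+c_{h}(\mathbf{u}_{1h}-\mathbf{u}_{2h};\mathbf{u}_{2h},\cdot)$,
and analogously for $\tilde{c}_{h}$, the diagonal contributions being annihilated by \eqref{Stability conditions:sub4-1}. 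Using coercivity \eqref{Stability conditions:sub2} and \eqref{Stability conditions:sub4} on the left, the boundedness \eqref{Stability conditions:sub5} and \eqref{Stability conditions:sub6} on the right, and the a priori bound \eqref{3c41*} to replace $|||\mathbf{u}_{ih}|||_V$ and $|||\mathbf{B}_{ih}|||_W$ by the data, one reproduces, line for line, the chain that led to \eqref{ineq1}. Equivalently, since both solutions are fixed points of $\mathbb{A}$, I may invoke \eqref{ineq1} directly with $\mathbf{w}_{iu}=\mathbf{u}_{ih}$ and $\mathbf{w}_{iB}=\mathbf{B}_{ih}$, which yields at once
\[
|||\mathbf{u}_{1h}-\mathbf{u}_{2h}|||_V+|||\mathbf{B}_{1h}-\mathbf{B}_{2h}|||_W
\leq \delta\left(H_a\|\mathbf{f}\|_{h}+\|\mathbf{g}\|_{\tilde{h}}\right)
\left(|||\mathbf{u}_{1h}-\mathbf{u}_{2h}|||_V+|||\mathbf{B}_{1h}-\mathbf{B}_{2h}|||_W\right),
\]
with $\delta=12\zeta^3\max\{M_{h},\tilde{M}_{h}\}$.

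Finally, the smallness condition \eqref{3c81*} makes the prefactor $\delta\left(H_a\|\mathbf{f}\|_{h}+\|\mathbf{g}\|_{\tilde{h}}\right)$ strictly less than $1$, so the displayed inequality forces $|||\mathbf{u}_{1h}-\mathbf{u}_{2h}|||_V+|||\mathbf{B}_{1h}-\mathbf{B}_{2h}|||_W=0$. Because $|||\cdot|||_V$ is a norm on $\mathbf{V}_h^0$ and $|||\cdot|||_W$ is a norm on $\mathbf{\bar{W}}_h$ (Lemma \ref{lemma7w*}), this gives $\mathbf{u}_{1h}=\mathbf{u}_{2h}$ and $\mathbf{B}_{1h}=\mathbf{B}_{2h}$, which is the claimed uniqueness. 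The main obstacle is purely bookkeeping in the nonlinear terms — keeping track of the constant so that the contraction factor is precisely $\delta$ — but this work is already carried out in the estimate \eqref{ineq1} of Lemma \ref{results11*}; the genuinely essential point is the observation that any solution of \eqref{3c1*} is a fixed point of $\mathbb{A}$, so that \eqref{ineq1} becomes a self-referential contraction estimate that the smallness condition closes.
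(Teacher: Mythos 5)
Your proposal is correct and follows essentially the same route as the paper: subtract the two instances of \eqref{3c1*}, test with the differences, split the trilinear terms so that \eqref{Stability conditions:sub4-1} kills the diagonal contributions, and combine the coercivity and continuity bounds of Lemma \ref{lemma13*} with the a priori bound \eqref{3c41*} to obtain a contraction factor $\delta\left(H_a\|\mathbf{f}\|_{h}+\|\mathbf{g}\|_{\tilde{h}}\right)<1$. The only (harmless) difference is that you shortcut the bookkeeping by invoking \eqref{ineq1} through the fixed-point property of $\mathbb{A}$, whereas the paper redoes the estimate directly on the difference of the two solutions; both yield the same bound.
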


\begin{proof}
Let $(\mathbf{u}_{1h},\mathbf{B}_{1h})$,
$(\mathbf{u}_{2h},\mathbf{B}_{2h})\in\mathbf{\bar{V}}_h\times\mathbf{\bar{W}}_h$ be two solutions of problem (\ref{3c1*}). Then it suffices to show $\mathbf{u}_{1h}=\mathbf{u}_{2h}$ and $\mathbf{B}_{1h}=\mathbf{B}_{2h}$. In  fact, from (\ref{3c1*}) it follows that, for any $(\mathbf{v}_{h},\mathbf{w}_{h})\in\mathbf{\bar{V}}_h\times\mathbf{\bar{W}}_h$,
\begin{align*}
a_{h}(\mathbf{u}_{1h},\mathbf{v}_h)
+\tilde{a}_{h}(\mathbf{B}_{1h},\mathbf{w}_h)
+c_{h}(\mathbf{u}_{1h};\mathbf{u}_{1h},\mathbf{v}_h)
+\tilde{c}_{h}(\mathbf{v}_h;\mathbf{B}_{1h},\mathbf{B}_{1h})-
\tilde{c}_{h}(\mathbf{u}_{1h};\mathbf{B}_{1h},\mathbf{w}_h)
=(\mathbf{f},\mathbf{v}_h)+\frac{1}{R_m}(\mathbf{g},\mathbf{w}_h),\\
a_{h}(\mathbf{u}_{2h},\mathbf{v}_h)
+\tilde{a}_{h}(\mathbf{B}_{2h},\mathbf{w}_h)
+c_{h}(\mathbf{u}_{2h};\mathbf{u}_{2h},\mathbf{v}_h)
+\tilde{c}_{h}(\mathbf{v}_h;\mathbf{B}_{2h},\mathbf{B}_{2h})-
\tilde{c}_{h}(\mathbf{u}_{2h};\mathbf{B}_{2h},\mathbf{w}_h)
=(\mathbf{f},\mathbf{v}_h)+\frac{1}{R_m}(\mathbf{g},\mathbf{w}_h).
\end{align*}
Subtracting the above first  equation from the second one   and choosing $\mathbf{v}_h=\mathbf{u}_{1h}-\mathbf{u}_{2h}$,
$\mathbf{w}_h=\mathbf{B}_{1h}-\mathbf{B}_{2h}$, we get
\begin{align*}
&a_{h}(\mathbf{u}_{1h}-\mathbf{u}_{2h},\mathbf{u}_{1h}-\mathbf{u}_{2h})+
\tilde{a}_{h}(\mathbf{B}_{1h}-\mathbf{B}_{2h},\mathbf{B}_{1h}-\mathbf{B}_{2h})\\
=&-c_{h}(\mathbf{u}_{1h};\mathbf{u}_{1h}-\mathbf{u}_{2h},\mathbf{u}_{1h}-\mathbf{u}_{2h})
-c_{h}(\mathbf{u}_{1h}-\mathbf{u}_{2h};\mathbf{u}_{2h},\mathbf{u}_{1h}-\mathbf{u}_{2h})\\
&-\tilde{c}_{h}(\mathbf{u}_{1h}-\mathbf{u}_{2h};\mathbf{B}_{1h}-\mathbf{B}_{2h};\mathbf{B}_{1h})
-\tilde{c}_{h}(\mathbf{u}_{1h}-\mathbf{u}_{2h};\mathbf{B}_{2h},\mathbf{B}_{1h}-\mathbf{B}_{2h})\\
&+\tilde{c}_{h}(\mathbf{u}_{1h};\mathbf{B}_{1h}-\mathbf{B}_{2h},\mathbf{B}_{1h}-\mathbf{B}_{2h})
+\tilde{c}_{h}(\mathbf{u}_{1h}-\mathbf{u}_{2h};\mathbf{B}_{2h},\mathbf{B}_{1h}-\mathbf{B}_{2h}),
\end{align*}
which, together with  Lemma \ref{lemma13*}, leads to
\begin{align*}
&\frac{1}{H_a^2}|||\mathbf{u}_{1h}-\mathbf{u}_{2h}|||_V^2
+\frac{1}{R_m^2}|||\mathbf{B}_{1h}-\mathbf{B}_{2h}|||_W^2\\
\leq&M_{h}|||\mathbf{u}_{1h}-\mathbf{u}_{2h}|||_V^2|||\mathbf{u}_{1h}|||_V
+\tilde{M}_{h}\left(|||\mathbf{B}_{1h}|||_W+|||\mathbf{B}_{2h}|||_W\right)
|||\mathbf{B}_{1h}-\mathbf{B}_{2h}|||_W
|||\mathbf{u}_{1h}-\mathbf{u}_{2h}|||_V\\
&+\tilde{M}_{h}|||\mathbf{B}_{1h}-\mathbf{B}_{2h}|||_W^2
|||\mathbf{u}_{1h}|||_V
+\tilde{M}_{h}|||\mathbf{B}_{2h}|||_W|||\mathbf{B}_{1h}-\mathbf{B}_{2h}|||_W
|||\mathbf{u}_{1h}-\mathbf{u}_{2h}|||_V .
\end{align*}
This estimate plus \eqref{3c41*} yields
 \begin{align*}
&|||\mathbf{u}_{1h}-\mathbf{u}_{2h}|||_V
+|||\mathbf{B}_{1h}-\mathbf{B}_{2h}|||_W\\
\leq&2\zeta\{H_aM_{h}|||\mathbf{u}_{1h}-\mathbf{u}_{2h}|||_V|||\mathbf{u}_{1h}|||_V
+H_a\tilde{M}_{h}(|||\mathbf{B}_{1h}|||_W+|||\mathbf{B}_{2h}|||_W)
|||\mathbf{B}_{1h}-\mathbf{B}_{2h}|||_W\\
&+R_m\tilde{M}_{h}|||\mathbf{B}_{1h}-\mathbf{B}_{2h}|||_W
|||\mathbf{u}_{1h}|||_V
+R_M\tilde{M}_{h}|||\mathbf{B}_{2h}|||_W|||\mathbf{u}_{1h}-\mathbf{u}_{2h}|||_W\}\\
 \leq&4\zeta^2(H_a\|\mathbf{f}\|_{h}+\|\mathbf{g}\|_{\tilde{h}})
\left((H_aM_{h}+R_M\tilde{M}_{h})|||\mathbf{u}_{1h}-\mathbf{u}_{2h}|||_W
+(2H_a\tilde{M}_{h}+R_m\tilde{M}_{h})|||\mathbf{B}_{1h}-\mathbf{B}_{2h}|||_W
\right)\\
\leq& 12\zeta^3
\max\{M_{h}, \tilde{M}_{h}\}\left(H_a\|\mathbf{f}\|_{h}+\|\mathbf{g}\|_{\tilde{h}}\right)
\left(|||\mathbf{u}_{1h}-\mathbf{u}_{2h}|||_V
+|||\mathbf{B}_{1h}-\mathbf{B}_{2h}|||_W\right).
\end{align*}

In view of the assumption (\ref{3c81*}), the above inequality implies
$$\mathbf{u}_{1h}=\mathbf{u}_{2h}, \quad \mathbf{B}_{1h}=\mathbf{B}_{2h}. $$
This completes the proof.
\end{proof}

Finally, we have the following existence and uniqueness results for the WG scheme (\ref{scheme01*}).
\begin{myTheo}\label{main-results11*}
The scheme (\ref{scheme01*}) admits at least one solution $(\mathbf{u}_h,\mathbf{B}_h,p_h,r_h)
\in\mathbf{V}_h^0\times\mathbf{W}_h^0\times Q_h^0\times R_h^0$ and there holds the boundedness result \eqref{3c41*}. In addition,  the scheme admits a unique solution  under the smallness condition
\eqref{3c81*}.
 \end{myTheo}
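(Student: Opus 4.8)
The plan is to assemble the theorem from the equivalence established in Lemma \ref{lemma15a*}, the existence/boundedness and uniqueness results for the reduced problem (\ref{3c1*}) in Lemmas \ref{results11*} and \ref{results211*}, and the inf-sup inequalities of Lemma \ref{lemma15*}. The structure of the argument is to do everything at the level of the velocity-magnetic pair $(\mathbf{u}_h,\mathbf{B}_h)$ first, and only then recover the pressure and pseudo-pressure.

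For existence, I would first invoke Lemma \ref{results11*} to obtain a solution $(\mathbf{u}_h,\mathbf{B}_h)\in\mathbf{\bar{V}}_h\times\mathbf{\bar{W}}_h$ of (\ref{3c1*}) together with the a priori bound (\ref{3c41*}). By part (II) of Lemma \ref{lemma15a*}, it then remains to produce $p_h\in Q_h^0$ and $r_h\in R_h^0$ solving (\ref{aux-ph}) and (\ref{aux-rh}). The key observation is that, since $(\mathbf{u}_h,\mathbf{B}_h)$ satisfies (\ref{3c1*}), the right-hand side functionals of (\ref{aux-ph}) and (\ref{aux-rh}) annihilate the kernels $\mathbf{\bar{V}}_h$ and $\mathbf{\bar{W}}_h$ respectively, so they are compatible with the ranges of $b_{h}(\cdot,\cdot)$ and $\tilde{b}_{h}(\cdot,\cdot)$. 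The inf-sup inequalities (\ref{inf-sup-bh}) and (\ref{inf-sup-tildebh}) then guarantee that such $p_h$ and $r_h$ exist and are uniquely determined. This assembles a full solution of (\ref{scheme01*}), and the bound (\ref{3c41*}) carries over unchanged because its velocity and magnetic components coincide with those of the reduced problem.

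For uniqueness under the smallness condition (\ref{3c81*}), I would take two solutions of (\ref{scheme01*}); by part (I) of Lemma \ref{lemma15a*} their velocity and magnetic components solve (\ref{3c1*}), so Lemma \ref{results211*} forces these components to coincide. With $(\mathbf{u}_h,\mathbf{B}_h)$ thereby fixed, the defining relations (\ref{aux-ph}) and (\ref{aux-rh}) share identical right-hand sides across the two solutions, and the inf-sup stability of Lemma \ref{lemma15*} then pins down $p_h$ and $r_h$ uniquely. Hence the whole quadruple is unique.

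The hard part, and essentially the only step needing genuine care, will be the recovery and well-posedness of $p_h$ and $r_h$: one must verify that the functionals on the right of (\ref{aux-ph})--(\ref{aux-rh}) vanish on $\mathbf{\bar{V}}_h$ and $\mathbf{\bar{W}}_h$ -- which is precisely what (\ref{3c1*}) encodes -- before applying the discrete inf-sup inequalities of Lemma \ref{lemma15*} to solve uniquely for them in the complements of these kernels. Everything else is a direct transcription of the already-established lemmas.
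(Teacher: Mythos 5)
Your proposal is correct and follows essentially the same route as the paper: existence and uniqueness of $(\mathbf{u}_h,\mathbf{B}_h)$ via Lemmas \ref{lemma15a*}, \ref{results11*} and \ref{results211*}, then recovery of $p_h$ and $r_h$ from \eqref{aux-ph}--\eqref{aux-rh} using the inf-sup inequalities of Lemma \ref{lemma15*}. Your added remark that the right-hand functionals vanish on $\mathbf{\bar{V}}_h$ and $\mathbf{\bar{W}}_h$ (a compatibility condition the paper leaves implicit) is the correct justification for applying the inf-sup argument.
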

 \begin{proof}
 The existence and uniqueness of the discrete solutions $\mathbf{u}_h$ and $\mathbf{B}_h$ follow from Lemmas \ref{lemma15a*}, \ref{results11*} and \ref{results211*}, and the existence and uniqueness of the discrete solution, $p_h$,   to  \eqref{aux-ph} and the discrete solution, $r_h$,   to    \eqref{aux-rh}  follow from  the two discrete inf-sup inequalities in Lemma \ref{lemma15*}.
 \end{proof}

\section{Error estimates}

This section is devoted to establish the error estimates of the WG scheme (\ref{scheme01*}). To this end, we assume that  the weak solution, $(\mathbf{u},\mathbf{B},p,r)$, to the problem (\ref{mhd1*})-(\ref{mhd5*}) satisfies the following regularity conditions:
\begin{equation}
\label{regularity}
    \mathbf{u}\in\mathbf{V}\cap[H^{k+1}(\Omega)]^d, \quad  \mathbf{B}\in  \mathbf{W}\cap[H^{k+1}(\Omega)]^d, \quad p \in L_0^2(\Omega)\cap  H^{k}(\Omega) ,\quad r\in H_0^1(\Omega)\cap H^{k}(\Omega).
    \end{equation}
    Here we recall that $k\geq 1$.
We set
\begin{align*}
\Pi_1\mathbf{u}|_K&:=\{\mathbf{P}_k^{\mathcal{RT}}(\mathbf{u}|_K),\mathbf{Q}_k^b(\mathbf{u}|_K)\},\ \ \
\Pi_2\mathbf{B}|_K:=\{\mathbf{P}_k^{\mathcal{RT}}(\mathbf{B}|_K),\mathbf{Q}_k^b(\mathbf{B}|_K)\},\\
\Pi_3p|_K&:=\{Q_{k-1}^o(p|_K),Q_k^b(p|_K)\}, \ \ \
\Pi_4 r|_K:=\{Q_{k-1}^o(r|_K),Q_k^b(r|_K)\}.
\end{align*}
for any $K\in \mathcal{T}_h$. 

\begin{myLem}\label{lemma21*}
For for any $(\mathbf{v}_h,\mathbf{w}_h,q_h,r_h)\in\mathbf{V}_h^0\times\mathbf{W}_h^0\times Q_h^0\times R_h^0$,
%
  there hold 
\begin{align}\label{d1*}
&
a_{h}(\Pi_1\mathbf{u},\mathbf{v}_h)
+\tilde{a}_{h}(\Pi_2\mathbf{B},\mathbf{w}_h)
+b_{h}(\mathbf{v}_h,\Pi_3p)-b_{h}(\Pi_1\mathbf{u},q_h)
+\tilde{b}_{h}(\mathbf{w}_h,\Pi_4r)-\tilde{b}_{h}(\Pi_2\mathbf{B},\theta_h)
\nonumber\\
&\qquad
+c_{h}(\Pi_1\mathbf{u};\Pi_1\mathbf{u},\mathbf{v}_h)
+\tilde{c}_{h}(\mathbf{v}_h;\Pi_2\mathbf{B},\Pi_2\mathbf{B})
-\tilde{c}_{h}(\Pi_1\mathbf{u};\Pi_2\mathbf{B},\mathbf{w}_h)\nonumber\\
=&
(\mathbf{f},\mathbf{v}_{ho})+\frac{1}{R_m}(\mathbf{g},\mathbf{w}_h)
+E_u({\mathbf{u},\mathbf{v}_h})+E_B({\mathbf{B},\mathbf{w}_h})
+E_{\tilde{u}}( \mathbf{u},\mathbf{v}_h)\nonumber\\
&\qquad
+E_{\tilde{B}1}(\mathbf{v}_h;\mathbf{B}_h,\mathbf{w}_h)
+E_{\tilde{B}2}(\mathbf{u}_h;\mathbf{B}_h,\mathbf{w}_h),
\end{align}
where
\begin{align*}
&E_u({\mathbf{u},\mathbf{v}_h}):=
\frac{1}{H_a^2}\langle(\nabla\mathbf{u}-\mathbf{Q}_{k-1}^o\nabla\mathbf{u}
)\cdot\mathbf{n},\mathbf{v}_{hb}-\mathbf{v}_{ho}\rangle_{\partial\mathcal{T}_h}
+\frac{1}{H_a^2}\langle \tau(\mathbf{P}_k^{\mathcal{RT}}\mathbf{u}-\mathbf{Q}_k^b\mathbf{u}),
\mathbf{v}_{ho}-\mathbf{v}_{hb}\rangle_{\partial\mathcal{T}_h},
\\
&E_B({\mathbf{B},\mathbf{w}_h}):
=-\frac{1}{R_m^2}\langle\nabla\times\mathbf{B}-\mathbf{Q}_{k-1}^o(\nabla\times\mathbf{B}),
(\mathbf{w}_{ho}-\mathbf{w}_{hb})\times\mathbf{n}\rangle_{\partial\mathcal{T}_h}\\
&\ \ \ \ \ \ \ \ \ \ \ \ \ \ \
+\frac{1}{R_m^2}\langle \tau(\mathbf{P}_k^{\mathcal{RT}}\mathbf{B}-\mathbf{Q}_k^b\mathbf{B})\times\mathbf{n},
(\mathbf{w}_{ho}-\mathbf{w}_{hb})\times\mathbf{n}\rangle_{\partial\mathcal{T}_h},
\\
&E_{\tilde{u}}({ \mathbf{u},\mathbf{v}_h}):
=\frac{1}{2N}(\mathbf{u}\otimes\mathbf{u}-
\mathbf{P}_k^{\mathcal{RT}}\mathbf{u}
\otimes\mathbf{P}_k^{\mathcal{RT}}\mathbf{u},\nabla_h\mathbf{v}_{ho})
-\frac{1}{2N}\langle(\mathbf{u}\otimes\mathbf{u}-
\mathbf{Q}_k^b\mathbf{u}\otimes\mathbf{Q}_k^b\mathbf{u}) \mathbf{n},
\mathbf{v}_{ho}\rangle_{\partial\mathcal{T}_h}\\
&\qquad \qquad\qquad
-\frac{1}{2N}(\mathbf{u}\cdot\nabla\mathbf{u}-
\mathbf{P}_k^{\mathcal{RT}}\mathbf{u}\cdot
\nabla_h\mathbf{P}_k^{\mathcal{RT}}\mathbf{u},\mathbf{v}_{ho})
-\frac{1}{2N}\langle\mathbf{v}_{hb}\otimes\mathbf{Q}_k^b\mathbf{u} \mathbf{n},
\mathbf{P}_k^{\mathcal{RT}}\mathbf{u}\rangle_{\partial\mathcal{T}_h},\\
&E_{\tilde{B}1}(\mathbf{B},\mathbf{v}_h)
:=-\frac{1}{R_m}(\nabla_h\times(\mathbf{B}-\mathbf{P}_k^{\mathcal{RT}}\mathbf{B}),\mathbf{v}_{ho}\times\mathbf{B})
+\frac{1}{R_m}(\nabla_h\times\mathbf{P}_k^{\mathcal{RT}}\mathbf{B},\mathbf{v}_{ho}\times(\mathbf{P}_k^{\mathcal{RT}}\mathbf{B}-\mathbf{B}))\\
&\qquad \qquad \qquad
-\frac{1}{R_m}\langle(\mathbf{P}_k^{\mathcal{RT}}\mathbf{B}-\mathbf{Q}_k^b\mathbf{B})\times\mathbf{n},
\mathbf{v}_{ho}\times\mathbf{P}_k^{\mathcal{RT}}\mathbf{B}
\rangle_{\partial\mathcal{T}_h},\\
&E_{\tilde{B}2}(\mathbf{u};\mathbf{B},\mathbf{w}_h)
:=-\frac{1}{R_m}(\nabla_h\times\mathbf{w}_{ho},
(\mathbf{u}\times\mathbf{B}-
\mathbf{P}_k^{\mathcal{RT}}\mathbf{u}\times\mathbf{P}_k^{\mathcal{RT}}\mathbf{B}))
-\frac{1}{R_m}\langle\mathbf{w}_{ho}\times\mathbf{n},\mathbf{u}\times\mathbf{B}
\rangle_{\partial\mathcal{T}_h}\\
& \qquad\qquad\qquad \qquad
-\frac{1}{R_m}\langle(\mathbf{w}_{ho}-\mathbf{w}_{hb})\times\mathbf{n},
\mathbf{P}_k^{\mathcal{RT}}\mathbf{u}\times\mathbf{P}_k^{\mathcal{RT}}\mathbf{B}
\rangle_{\partial\mathcal{T}_h}.
\end{align*}
In addition, we have
\begin{align} \label{d2*}
\mathbf{P}_k^{\mathcal{RT}}\mathbf{u}|_K\in[\mathcal{P}_k(K)]^d \ \text{ and } \
\mathbf{P}_k^{\mathcal{RT}}\mathbf{B}|_K\in[\mathcal{P}_k(K)]^d, \ \ \forall K\in\mathcal{T}_h.
\end{align}
\end{myLem}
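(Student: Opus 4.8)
The plan is to establish \eqref{d1*} by processing the left-hand side of the discrete scheme evaluated at the projected solution term by term: each discrete operator is rewritten as its broken classical counterpart plus interface and stabilization contributions, after which the strong equations \eqref{mhd1*}--\eqref{mhd4*} are invoked to produce the data terms $(\mathbf{f},\mathbf{v}_{ho})$ and $\tfrac{1}{R_m}(\mathbf{g},\mathbf{w}_h)$; whatever remains is collected into the functionals $E_u,E_B,E_{\tilde u},E_{\tilde B1},E_{\tilde B2}$. I would prove \eqref{d2*} first, since it underlies several cancellations. Because $\nabla\cdot\mathbf{u}=0$ and $\nabla\cdot\mathbf{B}=0$, Lemma \ref{lemma9*} gives $(\nabla\cdot\mathbf{P}_k^{\mathcal{RT}}\mathbf{u},\phi_h)_K=(\nabla\cdot\mathbf{u},\phi_h)_K=0$ for all $\phi_h\in\mathcal{P}_k(K)$; taking $\phi_h=\nabla\cdot\mathbf{P}_k^{\mathcal{RT}}\mathbf{u}\in\mathcal{P}_k(K)$ forces $\nabla\cdot\mathbf{P}_k^{\mathcal{RT}}\mathbf{u}=0$ on $K$, whence Lemma \ref{lemma8*} yields $\mathbf{P}_k^{\mathcal{RT}}\mathbf{u}|_K\in[\mathcal{P}_k(K)]^d$, and likewise for $\mathbf{B}$.

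For the linear diffusion terms I would use the commutativity identities of Lemma \ref{lemma11*}. For $a_h$ these give $\nabla_{w,k-1}\Pi_1\mathbf{u}=\mathbf{Q}_{k-1}^o(\nabla\mathbf{u})$; rewriting the weak gradient of $\mathbf{v}_h$ via its definition as $\nabla_h\mathbf{v}_{ho}$ plus a jump $\langle\mathbf{v}_{hb}-\mathbf{v}_{ho},\,\cdot\,\mathbf{n}\rangle$, integrating $(\nabla\mathbf{u},\nabla_h\mathbf{v}_{ho})$ by parts elementwise, and using that $\nabla\mathbf{u}\cdot\mathbf{n}$ is single-valued while $\mathbf{v}_{hb}$ is single-valued and vanishes on $\partial\Omega$ to turn $\mathbf{v}_{ho}$ into $\mathbf{v}_{ho}-\mathbf{v}_{hb}$, produces $-\tfrac{1}{H_a^2}(\Delta\mathbf{u},\mathbf{v}_{ho})$ together with exactly the two face terms defining $E_u$ (the second being $s_h(\Pi_1\mathbf{u},\mathbf{v}_h)$). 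The term $\tilde a_h$ is treated identically with the weak-curl identity; here I would first note that, although $\Pi_2\mathbf{B}$ uses $\mathbf{P}_k^{\mathcal{RT}}\mathbf{B}$ rather than $\mathbf{Q}_k^o\mathbf{B}$, the property \eqref{lemma91*:sub2} makes $(\mathbf{P}_k^{\mathcal{RT}}\mathbf{B},\nabla\times\boldsymbol{\phi})_K=(\mathbf{B},\nabla\times\boldsymbol{\phi})_K$ for $\boldsymbol{\phi}\in[\mathcal{P}_{k-1}(K)]^{2d-3}$, so Lemma \ref{lemma11*} still applies and $\nabla_{w,k-1}\times\Pi_2\mathbf{B}=\mathbf{Q}_{k-1}^o(\nabla\times\mathbf{B})$; the remaining steps mirror those for $a_h$ and yield $E_B$.

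The coupling terms are handled as follows. By Lemma \ref{lemma11*} and the degree of $\mathbf{v}_{ho}$, $b_h(\mathbf{v}_h,\Pi_3p)=(\mathbf{Q}_k^o\nabla p,\mathbf{v}_{ho})=(\nabla p,\mathbf{v}_{ho})$ and $\tilde b_h(\mathbf{w}_h,\Pi_4r)=\tfrac{1}{R_m}(\nabla r,\mathbf{w}_{ho})$, which cancel against the pressure and pseudo-pressure contributions produced when \eqref{mhd1*} and \eqref{mhd3*} are inserted; these two couplings therefore generate no error. For the divergence couplings I would reuse the computation from the globally-divergence-free argument: $-b_h(\Pi_1\mathbf{u},q_h)$ reduces to $(\nabla\cdot\mathbf{P}_k^{\mathcal{RT}}\mathbf{u},q_{ho})-\langle\mathbf{P}_k^{\mathcal{RT}}\mathbf{u}\cdot\mathbf{n},q_{hb}\rangle$, which vanishes by \eqref{d2*}, by \eqref{lemma91*:sub1}, by single-valuedness of $\mathbf{u}\cdot\mathbf{n}$ and of $q_{hb}$, and by $\mathbf{u}|_{\partial\Omega}=0$; similarly $\tilde b_h(\Pi_2\mathbf{B},\theta_h)=0$ using $\theta_{hb}|_{\partial\Omega}=0$. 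Finally, for the three nonlinear trilinear terms I would insert the definitions of $\nabla_{w,k}\cdot$ and $\nabla_{w,k}\times$, integrate by parts, and add and subtract $\mathbf{P}_k^{\mathcal{RT}}\mathbf{u}$, $\mathbf{P}_k^{\mathcal{RT}}\mathbf{B}$ and their traces $\mathbf{Q}_k^b\mathbf{u}$, $\mathbf{Q}_k^b\mathbf{B}$ against the exact $\mathbf{u}\otimes\mathbf{u}$, $\mathbf{u}\times\mathbf{B}$ and $(\nabla\times\mathbf{B})\times\mathbf{B}$ appearing after testing \eqref{mhd1*} and \eqref{mhd3*}, grouping the consistent parts against the data and the remainders into $E_{\tilde u}$, $E_{\tilde B1}$ and $E_{\tilde B2}$.

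The hard part will be the nonlinear terms: tracking all interface integrals that arise when the weak divergence and weak curl are expanded, and verifying that the consistent pieces regroup exactly into $\tfrac1N(\nabla\cdot(\mathbf{u}\otimes\mathbf{u}),\mathbf{v}_{ho})$, $\tfrac{1}{R_m}((\nabla\times\mathbf{B})\times\mathbf{B},\mathbf{v}_{ho})$ and $\tfrac{1}{R_m}(\nabla\times(\mathbf{u}\times\mathbf{B}),\mathbf{w}_{ho})$ from the strong form, leaving precisely the stated $E$-functionals. This requires careful use of the single-valuedness of the exact traces, the boundary conditions \eqref{mhd5*}--\eqref{mhd6*}, and the skew-symmetric bookkeeping inside $c_h$; the diffusion, pressure and divergence manipulations are then routine once \eqref{d2*} and the commutativity identities are in place.
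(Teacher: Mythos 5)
Your proposal follows essentially the same route as the paper's proof: establish \eqref{d2*} first via Lemmas \ref{lemma9*} and \ref{lemma8*}, then use the commutativity identities of Lemma \ref{lemma11*}, the definitions of the weak operators, elementwise integration by parts, and the single-valuedness of exact traces together with the boundary conditions to reduce each term to its consistent counterpart plus the stated $E$-functionals. Your observation that the weak-curl commutativity identity still applies to $\Pi_2\mathbf{B}$ (which uses $\mathbf{P}_k^{\mathcal{RT}}\mathbf{B}$ rather than $\mathbf{Q}_k^o\mathbf{B}$) because of \eqref{lemma91*:sub2} is a correct and useful clarification of a step the paper leaves implicit.
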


\begin{proof}
We first show (\ref{d2*}). For any $K\in\mathcal{T}_h,$ using Lemma \ref{lemma9*} we get
\begin{align*}
(\nabla\cdot\mathbf{P}_k^{\mathcal{RT}}\mathbf{u},\phi_h)_K=(\nabla\cdot\mathbf{u},\phi_h)_K=0, \quad \forall  \phi_h\in \mathcal{P}_k(K),\\
(\nabla\cdot\mathbf{P}_k^{\mathcal{RT}}\mathbf{B},\theta_h)_K=(\nabla\cdot\mathbf{B},\theta_h)_K=0, \quad \forall  \theta_h\in \mathcal{P}_k(K),
\end{align*}
which give
\begin{align}\label{divP_k}
\nabla\cdot\mathbf{P}_k^{\mathcal{RT}}\mathbf{u}=0, \quad
\nabla\cdot\mathbf{P}_k^{\mathcal{RT}}\mathbf{B}=0.
\end{align}
Then  the result (\ref{d2*}) follows from Lemma \ref{lemma8*}.

From the definitions of the bilinear forms $a_{h}(\cdot,\cdot)$ and the weak gradient, the second commutativity property in Lemma \ref{lemma11*}, the properties of the projection $\mathbf{Q}_m^o$ ($m=k,k-1$), the Green's formula, the relation $\langle\nabla\mathbf{u}\ \mathbf{n},\mathbf{v}_{hb}\rangle_{\partial\mathcal{T}_h}=0$ and the definition of $E_u({\mathbf{u},\mathbf{v}_h})$, we immediately get,  for any $\mathbf{v}_h \in\mathbf{V}_h^0$,
\begin{align*}
&a_{h}(\Pi_1\mathbf{u},\mathbf{v}_h)
=\frac{1}{H_a^2}(\nabla_{w,k-1}\Pi_1\mathbf{u},\nabla_{w,k-1}\mathbf{v}_h)
+\frac{1}{H_a^2}\langle \tau(\mathbf{P}_k^{\mathcal{\mathcal{RT}}}\mathbf{u}-\mathbf{Q}_k^b\mathbf{u}),
\mathbf{v}_{ho}-\mathbf{v}_{hb}\rangle_{\partial\mathcal{T}_h}
\nonumber\\
 =& \frac{1}{H_a^2}(\mathbf{Q}_{k-1}^o\nabla\mathbf{u},\nabla_{w,k-1}\mathbf{v}_h)+\frac{1}{H_a^2}\langle \tau(\mathbf{P}_k^{\mathcal{\mathcal{RT}}}\mathbf{u}-\mathbf{Q}_k^b\mathbf{u}),
\mathbf{v}_{ho}-\mathbf{v}_{hb}\rangle_{\partial\mathcal{T}_h}\nonumber\\
=&-\frac{1}{H_a^2}(\nabla_h\cdot\mathbf{Q}_{k-1}^o\nabla\mathbf{u},\mathbf{v}_{ho})
+\frac{1}{H_a^2}\langle\mathbf{Q}_{k-1}^o\nabla\mathbf{u}
\ \mathbf{n},\mathbf{v}_{hb}\rangle_{\partial\mathcal{T}_h}+\frac{1}{H_a^2}\langle \tau(\mathbf{P}_k^{\mathcal{\mathcal{RT}}}\mathbf{u}-\mathbf{Q}_k^b\mathbf{u}),
\mathbf{v}_{ho}-\mathbf{v}_{hb}\rangle_{\partial\mathcal{T}_h}
\nonumber\\
=&\frac{1}{H_a^2}(\mathbf{Q}_{k-1}^o\nabla\mathbf{u},\nabla_h\mathbf{v}_{ho})
+\frac{1}{H_a^2}\langle\mathbf{Q}_{k-1}^o\nabla\mathbf{u}
\ \mathbf{n},\mathbf{v}_{hb}-\mathbf{v}_{ho}
\rangle_{\partial\mathcal{T}_h}+\frac{1}{H_a^2}\langle \tau(\mathbf{P}_k^{\mathcal{\mathcal{RT}}}\mathbf{u}-\mathbf{Q}_k^b\mathbf{u}),
\mathbf{v}_{ho}-\mathbf{v}_{hb}\rangle_{\partial\mathcal{T}_h}\nonumber\\
=&-\frac{1}{H_a^2}(\Delta\mathbf{u},\mathbf{v}_{ho})
+\frac{1}{H_a^2}\langle(\nabla\mathbf{u}-\mathbf{Q}_{k-1}^o\nabla\mathbf{u})\mathbf{n},\mathbf{v}_{hb}
-\mathbf{v}_{ho}\rangle_{\partial\mathcal{T}_h}+\frac{1}{H_a^2}\langle \tau(\mathbf{P}_k^{\mathcal{\mathcal{RT}}}\mathbf{u}-\mathbf{Q}_k^b\mathbf{u}),
\mathbf{v}_{ho}-\mathbf{v}_{hb}\rangle_{\partial\mathcal{T}_h}\nonumber\\
=&-\frac{1}{H_a^2}(\Delta\mathbf{u},\mathbf{v}_{ho})
+E_u({\mathbf{u},\mathbf{v}_h}).
\end{align*}
Similarly, in light of  the definitions of the bilinear forms $\tilde{a}_{h}(\cdot,\cdot)$ and the weak curl, the third commutativity property in Lemma \ref{lemma11*}, the properties of the projection $\mathbf{Q}_m^o$, the Green's formula, the relation $\langle\nabla\times\mathbf{B},\mathbf{w}_{hb}\times\mathbf{n}\rangle_{\partial\mathcal{T}_h}=0$, and the definition of $E_B({\mathbf{B},\mathbf{w}_h})$, we obtain,  for any $\mathbf{w}_h \in\mathbf{W}_h^0$,
 \begin{align*}
&\tilde{a}_{h}(\Pi_2\mathbf{B},\mathbf{w}_h)
=\frac{1}{R_m^2}(\nabla_{w,k-1}\times\Pi_2\mathbf{B},\nabla_{w,k-1}\times\mathbf{w}_h)
+\frac{1}{R_m^2}\langle \tau(\mathbf{P}_k^{\mathcal{RT}}\mathbf{B}-\mathbf{Q}_k^b\mathbf{B})\times\mathbf{n},
(\mathbf{w}_{ho}-\mathbf{w}_{hb})\times\mathbf{n}\rangle_{\partial\mathcal{T}_h}\\
=&\frac{1}{R_m^2}(\mathbf{Q}_{k-1}^o(\nabla\times\mathbf{B}),\nabla_{w,k-1}\times\mathbf{w}_h)
+\frac{1}{R_m^2}\langle \tau(\mathbf{P}_k^{\mathcal{RT}}\mathbf{B}-\mathbf{Q}_k^b\mathbf{B})\times\mathbf{n},
(\mathbf{w}_{ho}-\mathbf{w}_{hb})\times\mathbf{n}\rangle_{\partial\mathcal{T}_h}\\
=&\frac{1}{R_m^2}(\nabla_h\times(\mathbf{Q}_{k-1}^o(\nabla\times\mathbf{B})),\mathbf{w}_{ho})
+\frac{1}{R_m^2}\langle\mathbf{Q}_{k-1}^o(\nabla\times\mathbf{B}),\mathbf{w}_{hb}\times\mathbf{n}
\rangle_{\partial\mathcal{T}_h}\\
&+\frac{1}{R_m^2}\langle \tau(\mathbf{P}_k^{\mathcal{RT}}\mathbf{B}-\mathbf{Q}_k^b\mathbf{B})\times\mathbf{n},
(\mathbf{w}_{ho}-\mathbf{w}_{hb})\times\mathbf{n}\rangle_{\partial\mathcal{T}_h}\\
=&\frac{1}{R_m^2}(\mathbf{Q}_{k-1}^o(\nabla\times\mathbf{B}),\nabla_h\times\mathbf{w}_{ho})
-\frac{1}{R_m^2}\langle\mathbf{Q}_{k-1}^o(\nabla\times\mathbf{B}),(\mathbf{w}_{ho}-\mathbf{w}_{hb})\times\mathbf{n}
\rangle_{\partial\mathcal{T}_h}\\
&+\frac{1}{R_m^2}\langle \tau(\mathbf{P}_k^{\mathcal{RT}}\mathbf{B}-\mathbf{Q}_k^b\mathbf{B})\times\mathbf{n},
(\mathbf{w}_{ho}-\mathbf{w}_{hb})\times\mathbf{n}\rangle_{\partial\mathcal{T}_h}\\
=&\frac{1}{R_m^2}(\nabla\times\nabla\times\mathbf{B},\mathbf{w}_{ho})
-\frac{1}{R_m^2}\langle\nabla\times\mathbf{B}-\mathbf{Q}_{k-1}^o(\nabla\times\mathbf{B}),
(\mathbf{w}_{ho}-\mathbf{w}_{hb})\times\mathbf{n}\rangle_{\partial\mathcal{T}_h}\\
&
+\frac{1}{R_m^2}\langle \tau(\mathbf{P}_k^{\mathcal{RT}}\mathbf{B}-\mathbf{Q}_k^b\mathbf{B})\times\mathbf{n},
(\mathbf{w}_{ho}-\mathbf{w}_{hb})\times\mathbf{n}\rangle_{\partial\mathcal{T}_h}\\
=&\frac{1}{R_m^2}(\nabla\times\nabla\times\mathbf{B},\mathbf{w}_{ho})
+E_B({\mathbf{B},\mathbf{w}_h}).
\end{align*}

In view of  the definitions of $b_h(\cdot,\cdot)$ and the weak gradient, the first commutativity property in Lemma \ref{lemma11*}, the projection property, and the relations (\ref{lemma91*:sub1}), (\ref{divP_k})  and  $\langle\mathbf{u}\cdot\mathbf{n},q_{hb}\rangle_{\partial\mathcal{T}_h}=0$, we get
\begin{align*}
b_{h}(\mathbf{v}_h,\Pi_3p)-b_{h}(\Pi_1\mathbf{u},q_h)
=&(\nabla_{w,k}\{Q_{k-1}^op,Q_k^bp\},\mathbf{v}_{ho}) 
-(\nabla_{w,k}q_h,\mathbf{P}_k^{\mathcal{RT}}\mathbf{u})\\
=&(Q_k^o\nabla p,\mathbf{v}_{ho})+(\nabla\cdot \mathbf{P}_k^{\mathcal{RT}}\mathbf{u},q_{ho})-
\langle \mathbf{P}_k^{\mathcal{RT}}\mathbf{u}\cdot\mathbf{n},q_{hb}\rangle_{\partial\mathcal{T}_h}\\
=&(\nabla p,\mathbf{v}_{ho})-\langle\mathbf{u}\cdot\mathbf{n},q_{hb}\rangle_{\partial\mathcal{T}_h}\\
=&(\nabla p,\mathbf{v}_{ho}), \quad \forall \mathbf{v}_h \in\mathbf{V}_h^0.
\end{align*}
Similarly, we have
\begin{align*}
\tilde{b}_{h}(\mathbf{w}_h,\Pi_4r)-\tilde{b}_{h}(\Pi_2\mathbf{B},\theta_h)
=&\frac{1}{R_m}(\nabla_{w,k}\{Q_{k-1}^or,Q_k^br\},\mathbf{w}_{ho})
-\frac{1}{R_m}(\nabla_{w,k}\theta_h,\mathbf{P}_k^{\mathcal{RT}}\mathbf{B})
\nonumber\\
=&\frac{1}{R_m}(Q_k^o\nabla r,\mathbf{w}_{ho})+\frac{1}{R_m}(\nabla\cdot \mathbf{P}_k^{\mathcal{RT}}\mathbf{B},\theta_{ho})-
\frac{1}{R_m}\langle \mathbf{P}_k^{\mathcal{RT}}\mathbf{B}\cdot\mathbf{n},\theta_{hb}\rangle_{\partial\mathcal{T}_h}\\
=&\frac{1}{R_m}(\nabla r,\mathbf{w}_{ho})-\frac{1}{R_m}\langle\mathbf{B}\cdot\mathbf{n},\theta_{hb}\rangle_{\partial\mathcal{T}_h}\\
=&\frac{1}{R_m}(\nabla r,\mathbf{w}_{ho}), \quad \forall \mathbf{w}_h \in\mathbf{W}_h^0.
\end{align*}

By the Green's formula and the definitions of $c_{h}(\cdot;\cdot,\cdot)$, the weak divergence and $E_{\tilde{u}}
(\cdot,\cdot)$  we   get
\begin{align*}
c_{h}(\Pi_1\mathbf{u};\Pi_1\mathbf{u},\mathbf{v}_h)
=&\frac{1}{2N}(\nabla_{w,k}\cdot
\{\mathbf{P}_k^{\mathcal{RT}}\mathbf{u}\otimes\mathbf{P}_k^{\mathcal{RT}}\mathbf{u},
\mathbf{Q}_k^{b}\mathbf{u}\otimes\mathbf{Q}_k^{b}\mathbf{u}\},\mathbf{v}_{ho})\\
&-\frac{1}{2N}(\nabla_{w,k}\cdot\{\mathbf{v}_{ho}\otimes\mathbf{P}_k^{\mathcal{RT}}\Phi,
\mathbf{v}_{hb}\otimes\mathbf{Q}_k^{b} \mathbf{u}\}
,\mathbf{P}_k^{\mathcal{RT}}\mathbf{u})\\
=&\frac{1}{2N}(\nabla\cdot(\mathbf{u}\otimes\mathbf{u}),\mathbf{v}_{ho})+
\frac{1}{2N}(\mathbf{u}\otimes\mathbf{u}-\mathbf{P}_k^{\mathcal{RT}}\mathbf{u}\otimes\mathbf{P}_k^{\mathcal{RT}}\mathbf{u},
\nabla_h\mathbf{v}_{ho})\nonumber\\
& -\frac{1}{2N}\langle(\mathbf{u}\otimes\mathbf{u}-\mathbf{Q}_k^{b}\mathbf{u}\otimes\mathbf{Q}_k^{b}\mathbf{u}) \mathbf{n}
,\mathbf{v}_{ho}\rangle_{\partial_{\mathcal{T}_h}}\\
&+
\frac{1}{2N}(\nabla\cdot(\mathbf{u}\otimes \mathbf{u}),\mathbf{v}_{ho})
+\frac{1}{2N}( \mathbf{u}\cdot\nabla\mathbf{u}-\mathbf{P}_k^{\mathcal{RT}} \mathbf{u}\cdot\nabla_h\mathbf{P}_k^{\mathcal{RT}}\mathbf{u},
\mathbf{v}_{ho})\nonumber\\
&
+\frac{1}{2N}\langle(\mathbf{v}_{hb}\otimes\mathbf{Q}_k^{b} \mathbf{u} \mathbf{n}
,\mathbf{P}_k^{\mathcal{RT}}\mathbf{u}\rangle_{\partial_{\mathcal{T}_h}}\\
 =&\frac{1}{N}(\nabla\cdot(\mathbf{u}\otimes\mathbf{u}),\mathbf{v}_{ho})+E_{\tilde{u}}
(\mathbf{u},\mathbf{v}_h).
\end{align*}
Similarly, we can obtain
\begin{align*}
&\tilde{c}_{h}(\mathbf{v}_h;\Pi_2\mathbf{B},\Pi_2\mathbf{B})=-\frac{1}{R_m}(\nabla\times\mathbf{B}\times\mathbf{B},\mathbf{v}_{ho})
+E_{\tilde{B}1}(\mathbf{B}_h,\mathbf{v}_h),\nonumber\\
&-\tilde{c}_{h}(\Pi_1\mathbf{u};\Pi_2\mathbf{B},\mathbf{w}_h)=-\frac{1}{R_m}(\nabla\times(\mathbf{u}\times\mathbf{B}),\mathbf{w}_{ho})
-E_{\tilde{B}2}(\mathbf{u}_h;\mathbf{B}_h,\mathbf{w}_h).
\end{align*}

Combining the above  relations   and \eqref{mhd1*}, we finally arrive at the desired conclusion (\ref{d1*}).
\end{proof}
\begin{myLem}\label{lemma18*}
For any 
$\mathbf{v}_h\in\mathbf{ {V}}_h^0$ and
$\mathbf{w}_h\in\mathbf{ {W}}_h^0$, there  hold
\begin{align}
&|E_u(\mathbf{u},\mathbf{v}_h)|\lesssim
h^k\|\mathbf{u}\|_{k+1}|||\mathbf{v}_h|||_V,\label{lemma18*4}\\
&|E_B(\mathbf{B},\mathbf{w}_h)|\lesssim
h^k\|\mathbf{B}\|_{k+1}|||\mathbf{w}_h|||_W,\label{lemma18*5}\\
&|E_{\tilde{u}}(\mathbf{u},\mathbf{v}_h)|\lesssim 
h^k\|\mathbf{u}\|_2\|\mathbf{u}\|_{k+1}|||\mathbf{v}_h|||_V,
\label{lemma18*1}\\
&|E_{\tilde{B}1}(\mathbf{B},\mathbf{v}_h)|
\lesssim
h^k\|\mathbf{B}\|_2\|\mathbf{B}\|_{k+1}|||\mathbf{v}_h|||_V,
\label{lemma18*2}\\
&|E_{\tilde{B2}}(\mathbf{u};\mathbf{B},\mathbf{w}_h)|
\lesssim 
h^k(\|\mathbf{u}\|_2\|\mathbf{B}\|_{k+1}+
\|\mathbf{B}\|_2\|\mathbf{u}\|_{k+1})|||\mathbf{w}_h|||_W.\label{lemma18*3}
\end{align}
\end{myLem}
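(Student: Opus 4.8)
The plan is to bound each of the five consistency errors $E_u,E_B,E_{\tilde u},E_{\tilde B1},E_{\tilde B2}$ separately, in every case reducing the estimate to a combination of the Cauchy--Schwarz (or H\"older) inequality, the trace inequality (Lemma \ref{lemma2*}), the projection approximation bounds (Lemmas \ref{lemma1*}, \ref{lemma9*} and \ref{lemma17*}) and the norm equivalences (Lemma \ref{lemma7*}). Throughout, the weight $\tau|_{\partial K}=h_K^{-1}$ is used to split each interface factor as $h_K^{1/2}\cdot\tau^{1/2}$, so that one half is absorbed into an approximation estimate and the other half is recognised as part of $|||\mathbf{v}_h|||_V$ or $|||\mathbf{w}_h|||_W$; a final discrete Cauchy--Schwarz over $K\in\mathcal{T}_h$ then produces the global factors $\|\mathbf{u}\|_{k+1},\|\mathbf{B}\|_{k+1}$ and $|||\mathbf{v}_h|||_V,|||\mathbf{w}_h|||_W$.

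For the two linear terms this is immediate. In $E_u$ I would apply Cauchy--Schwarz face by face: Lemma \ref{lemma1*} (with $s=k-1$, $j=k$) gives $\|\nabla\mathbf{u}-\mathbf{Q}_{k-1}^o\nabla\mathbf{u}\|_{0,\partial K}\lesssim h_K^{k-1/2}|\mathbf{u}|_{k+1,K}$, while the triangle inequality together with Lemmas \ref{lemma1*} and \ref{lemma17*} gives $\|\mathbf{P}_k^{\mathcal{RT}}\mathbf{u}-\mathbf{Q}_k^b\mathbf{u}\|_{0,\partial K}\lesssim h_K^{k+1/2}|\mathbf{u}|_{k+1,K}$. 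After inserting the $\tau$-weights and summing, both contributions are $\lesssim h^k\|\mathbf{u}\|_{k+1}|||\mathbf{v}_h|||_V$. The term $E_B$ is treated identically, with $\nabla\times$ and the tangential jump $(\mathbf{w}_{ho}-\mathbf{w}_{hb})\times\mathbf{n}$ replacing $\nabla$ and $\mathbf{v}_{ho}-\mathbf{v}_{hb}$, and with $|||\mathbf{w}_h|||_W$ in place of $|||\mathbf{v}_h|||_V$.

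For the nonlinear terms I would first linearise every quadratic difference by the elementary splitting $\mathbf{a}\otimes\mathbf{a}-\mathbf{b}\otimes\mathbf{b}=(\mathbf{a}-\mathbf{b})\otimes\mathbf{a}+\mathbf{b}\otimes(\mathbf{a}-\mathbf{b})$ (and its analogues for $\mathbf{a}\cdot\nabla\mathbf{a}$ and $\mathbf{a}\times\mathbf{b}$), so that each resulting volume integral carries exactly one projection error. These are then estimated by H\"older's inequality with the exponent triples (up to permutation) $(\infty,2,2)$, $(6,2,3)$ and $(2,3,6)$, using the Sobolev embeddings $\|\mathbf{u}\|_{0,\infty}\lesssim\|\mathbf{u}\|_2$ and $\|\nabla\mathbf{u}\|_{0,3}\lesssim\|\mathbf{u}\|_2$ for the exact fields, the $L^p$-stability of the projections, the approximation estimates $\|\mathbf{u}-\mathbf{P}_k^{\mathcal{RT}}\mathbf{u}\|_0\lesssim h^{k+1}\|\mathbf{u}\|_{k+1}$, $|\mathbf{u}-\mathbf{P}_k^{\mathcal{RT}}\mathbf{u}|_1\lesssim h^{k}\|\mathbf{u}\|_{k+1}$ and $\|\mathbf{u}-\mathbf{P}_k^{\mathcal{RT}}\mathbf{u}\|_{0,3}\lesssim h^{k+1-d/6}\|\mathbf{u}\|_{k+1}$ from Lemmas \ref{lemma9*} and \ref{lemma17*}, and finally $\|\nabla_h\mathbf{v}_{ho}\|_0,\|\mathbf{v}_{ho}\|_{0,6}\lesssim|||\mathbf{v}_h|||_V$ and $\|\nabla_h\times\mathbf{w}_{ho}\|_0\lesssim|||\mathbf{w}_h|||_W$ from Lemma \ref{lemma7*}. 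This already produces the products $\|\mathbf{u}\|_2\|\mathbf{u}\|_{k+1}$, $\|\mathbf{B}\|_2\|\mathbf{B}\|_{k+1}$ and $\|\mathbf{u}\|_2\|\mathbf{B}\|_{k+1}+\|\mathbf{B}\|_2\|\mathbf{u}\|_{k+1}$ for the volume parts of $E_{\tilde u}$, $E_{\tilde B1}$ and $E_{\tilde B2}$.

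The delicate part, and the main obstacle, lies in the boundary integrals of $E_{\tilde u}$ and $E_{\tilde B2}$, where no projection error is visible a priori. Here I would use that the continuous products (e.g.\ $\mathbf{u}\otimes\mathbf{u}$, $\mathbf{u}\times\mathbf{B}$) and the numerical traces $\mathbf{v}_{hb}$, $\mathbf{w}_{hb}\times\mathbf{n}$ are single-valued across each interior face, while $\mathbf{v}_{hb}|_{\partial\Omega}=\mathbf{0}$ and $\mathbf{w}_{hb}\times\mathbf{n}|_{\partial\Omega}=\mathbf{0}$; consequently identities such as $\langle\mathbf{w}_{hb}\times\mathbf{n},\mathbf{u}\times\mathbf{B}\rangle_{\partial\mathcal{T}_h}=0$ and $\langle\mathbf{v}_{hb}\otimes\mathbf{u}\,\mathbf{n},\mathbf{u}\rangle_{\partial\mathcal{T}_h}=0$ hold, because the two contributions of adjacent elements carry opposite normals. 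Inserting these vanishing terms converts the boundary integrals into pairings of the interface jumps $\mathbf{v}_{ho}-\mathbf{v}_{hb}$ (resp.\ $(\mathbf{w}_{ho}-\mathbf{w}_{hb})\times\mathbf{n}$) against genuine approximation errors such as $\mathbf{u}\times\mathbf{B}-\mathbf{P}_k^{\mathcal{RT}}\mathbf{u}\times\mathbf{P}_k^{\mathcal{RT}}\mathbf{B}$. The remaining work is careful bookkeeping of powers of $h_K$: one must combine the $L^3$/$L^6$ trace inequalities of Lemma \ref{lemma2*} (which contribute the fractional negative powers $h_K^{-1/3}$, $h_K^{-1/6}$) with the $\tau$-weight and the boundary approximation estimates so that the exponents of $h_K$ sum to exactly $k$; this balancing relies on $1-d/6\ge 0$, i.e.\ $d\le 6$, and hence is valid for $d=2,3$. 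A concluding discrete Cauchy--Schwarz in $K$ then delivers the stated global bounds.
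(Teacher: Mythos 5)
Your proposal is correct and follows essentially the same route as the paper's proof: splitting each quadratic difference so that every term carries exactly one projection error, applying H\"older with the $(\infty,2,2)$, $(6,2,3)$-type exponent triples together with the Sobolev embedding and the $\mathcal{RT}$/$L^2$ projection estimates, and — crucially — exploiting the single-valuedness of the exact products and of the numerical traces to rewrite the boundary pairings against the jumps $\mathbf{v}_{ho}-\mathbf{v}_{hb}$ (resp.\ $(\mathbf{w}_{ho}-\mathbf{w}_{hb})\times\mathbf{n}$) before balancing the powers of $h_K$ with the trace inequality and the weight $\tau$. The paper carries this out in detail only for $E_{\tilde u}$ and declares the remaining four estimates analogous, so your outline covers the same ground with the same key ideas.
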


\begin{proof} We only show (\ref{lemma18*1}), since the other results  can be derived similarly.

We shall estimate the four terms of $E_{\tilde{u}}(\mathbf{u},\mathbf{v}_h)$ one by one.
Using the Cauchy-Schwarz inequality, the  H\"{o}lder's inequality, the Sobolev embedding theorem,
  and Lemmas \ref{lemma9*} , \ref{lemma17*} and \ref{lemma7*}, we have
\begin{align*}
&\ \ \ \
|(\mathbf{u}\otimes\mathbf{u}-\mathbf{P}_k^{\mathcal{RT}}\mathbf{u}\otimes\mathbf{P}_k^{\mathcal{RT}}\mathbf{u},
\nabla_h\mathbf{v}_{ho})|\\
&\leq|((\mathbf{u}-\mathbf{P}_k^{\mathcal{RT}}\mathbf{u})\otimes\mathbf{u},\nabla_h\mathbf{v}_{ho})|
+|(\mathbf{P}_k^{\mathcal{RT}}\mathbf{u}\otimes(\mathbf{u}-\mathbf{P}_k^{\mathcal{RT}}\mathbf{u}),
\nabla_h\mathbf{v}_{ho})|\\
&\leq |\mathbf{u}|_{0,\infty,\Omega}
\sum_{K\in\mathcal{T}_h}|\mathbf{u}-\mathbf{P}_k^{\mathcal{RT}}\mathbf{u}|_{0,K}
\|\nabla_h\mathbf{v}_{ho}\|_{0,K}
+\sum_{K\in\mathcal{T}_h}|\mathbf{u}-\mathbf{P}_k^{\mathcal{RT}}\mathbf{u}|_{0,3,K}
|\mathbf{P}_k^{\mathcal{RT}}\mathbf{u}|_{0,6,K}
\|\nabla_h\mathbf{v}_{ho}\|_{0, K}\\
&\leq|\mathbf{u}|_{0,\infty,\Omega}\sum_{K\in\mathcal{T}_h}|\mathbf{u}-\mathbf{P}_k^{\mathcal{RT}}\mathbf{u}|_{0,K}
\|\nabla_h\mathbf{v}_{ho}\|_{0,K}
+\sum_{K\in\mathcal{T}_h}|\mathbf{u}-\mathbf{P}_k^{\mathcal{RT}}\mathbf{u}|_{0,3,K}
(|\mathbf{u}-\mathbf{P}_k^{\mathcal{RT}}\mathbf{u}|_{0,6,K}+|\mathbf{u}|_{0,6,K})
\|\nabla_h\mathbf{v}_{ho}\|_{0, K}\\
&\leq |\mathbf{u}|_{0,\infty,\Omega}\sum_{K\in\mathcal{T}_h}|\mathbf{u}-\mathbf{P}_k^{\mathcal{RT}}\mathbf{u}|_{0,K}
\|\nabla_h\mathbf{v}_{ho}\|_{0,K}
+(|\mathbf{u}-\mathbf{P}_k^{\mathcal{RT}}\mathbf{u}|_{0,6,\Omega}+|\mathbf{u}|_{0,6,\Omega})\sum_{K\in\mathcal{T}_h}|\mathbf{u}-\mathbf{P}_k^{\mathcal{RT}}\mathbf{u}|_{0,3,K}
\|\nabla_h\mathbf{v}_{ho}\|_{0, K}\\
&\lesssim h^{k+1}|\mathbf{u}|_{0,\infty,\Omega}
|\mathbf{u}|_{k+1}|||\mathbf{v}_h|||_V
+||\mathbf{u}||_{1}
\sum_{K\in\mathcal{T}_h}|\mathbf{u}-\mathbf{P}_k^{\mathcal{RT}}\mathbf{u}|_{0,3,K}
\|\nabla_h\mathbf{v}_{ho}\|_{0,K}\\
&\lesssim h^{k+1}|\mathbf{u}|_{0,\infty}
|\mathbf{u}|_{k+1}|||\mathbf{v}_h|||_V
+h^{k+1-d/6}||\mathbf{u}||_{1}|\mathbf{u}|_{k+1}
||\nabla_h\mathbf{v}_{ho}||_{0}\\
&\lesssim h^{k}\|\mathbf{u}\|_2\|\mathbf{u}\|_{k+1}|||\mathbf{v}_h|||_V.
\end{align*}

Similarly,
  we can obtain
\begin{align*}
&
|\langle(\mathbf{u}\otimes\mathbf{u}-\mathbf{Q}_k^{b}\mathbf{u}
\otimes\mathbf{Q}_k^{b}\mathbf{u})\cdot\mathbf{n}
,\mathbf{v}_{ho}\rangle_{\partial_{\mathcal{T}_h}}|\\
=&|\langle(\mathbf{u}\otimes\mathbf{u}-\mathbf{Q}_k^{b}\mathbf{u}
\otimes\mathbf{Q}_k^{b}\mathbf{u})\cdot\mathbf{n}
,\mathbf{v}_{ho}-\mathbf{v}_{hb}\rangle_{\partial_{\mathcal{T}_h}}|
\\
\leq&
|\langle(\mathbf{u}-\mathbf{Q}_k^{b}\mathbf{u})
\otimes(\mathbf{u}-\mathbf{Q}_k^{o}\mathbf{u}) \mathbf{n}
,\mathbf{v}_{ho}-\mathbf{v}_{hb}\rangle_{\partial_{\mathcal{T}_h}}|
+|\langle(\mathbf{u}-\mathbf{Q}_k^{b}\mathbf{u})\otimes\mathbf{Q}_k^{o}\mathbf{u} \mathbf{n}
,\mathbf{v}_{ho}-\mathbf{v}_{hb}\rangle_{\partial_{\mathcal{T}_h}}|
\\
&\ \ \ \
+|\langle(\mathbf{Q}_k^{o}\mathbf{u}-\mathbf{Q}_k^{b}\mathbf{u})
\otimes(\mathbf{u}-\mathbf{Q}_k^{b}\mathbf{u}) \mathbf{n}
,\mathbf{v}_{ho}-\mathbf{v}_{hb}\rangle_{\partial_{\mathcal{T}_h}}|
+|\langle\mathbf{Q}_k^{o}\mathbf{u}\otimes(\mathbf{u}-\mathbf{Q}_k^{b}\mathbf{u}) \mathbf{n}
,\mathbf{v}_{ho}-\mathbf{v}_{hb}\rangle_{\partial_{\mathcal{T}_h}}|
\\
\leq &
\sum_{K\in\mathcal{T}_h}\left(|\mathbf{u}-\mathbf{Q}_k^{b}\mathbf{u}|_{0,\partial K}
|\mathbf{u}-\mathbf{Q}_k^{o}\mathbf{u}|_{0,\partial K}
|\mathbf{v}_{ho}-\mathbf{v}_{hb}|_{0,\infty,\partial K}
+|\mathbf{u}-\mathbf{Q}_k^{b}\mathbf{u}|_{0,\partial K}
|\mathbf{Q}_k^{o}\mathbf{u}|_{0,6,\partial K}
|\mathbf{v}_{ho}-\mathbf{v}_{hb}|_{0,3,\partial K}
\right)
\\
&\
+\sum_{K\in\mathcal{T}_h}\left(|\mathbf{Q}_k^{o}\mathbf{u}-\mathbf{Q}_k^{b}\mathbf{u}|_{0,\partial K}
|\mathbf{u}-\mathbf{Q}_k^{b}\mathbf{u}|_{0,\partial K}
|\mathbf{v}_{ho}-\mathbf{v}_{hb}|_{0,\infty,\partial K}
+|\mathbf{u}-\mathbf{Q}_k^{b}\mathbf{u}|_{0,\partial K}
|\mathbf{Q}_k^{o}\mathbf{u}|_{0,6,\partial K}
|\mathbf{v}_{ho}-\mathbf{v}_{hb}|_{0,3,\partial K}
\right)\\
\lesssim & h^{k}\|\mathbf{u}\|_2\|\mathbf{u}\|_{k+1}|||\mathbf{v}_h|||_V,
\\\\
&
|(\mathbf{u}\cdot\nabla\mathbf{u}-\mathbf{P}_k^{\mathcal{RT}}\mathbf{u}\cdot\nabla_h\mathbf{P}_k^{\mathcal{RT}}\mathbf{u},
\mathbf{v}_{ho})|
\\
\leq &
|((\mathbf{u}-\mathbf{P}_k^{\mathcal{RT}}\mathbf{u})\cdot\nabla\mathbf{u},\mathbf{v}_{ho})|
+|(\mathbf{P}_k^{\mathcal{RT}}\mathbf{u}
\cdot(\nabla\mathbf{u}-\nabla_h\mathbf{P}_k^{\mathcal{RT}}\mathbf{u}),\mathbf{v}_{ho})|
\\
\leq&
\sum_{K\in\mathcal{T}_h}|\mathbf{u}-\mathbf{P}_k^{\mathcal{RT}}\mathbf{u}|_{0,3,K}
|\nabla\mathbf{u}|_{0,K}
\|\mathbf{v}_{ho}\|_{0,6,K}
+
\sum_{K\in\mathcal{T}_h}|\nabla\mathbf{u}-\nabla_h\mathbf{P}_k^{\mathcal{RT}}\mathbf{u}|_{0,K}
|\mathbf{P}_k^{\mathcal{RT}}\mathbf{u}|_{0,6,K}
\|\mathbf{v}_{ho}\|_{0,3,K}
\\
\lesssim &h^{k}\|\mathbf{u}\|_2\|\mathbf{u}\|_{k+1}|||\mathbf{v}_h|||_V,
\end{align*}
and
\begin{align*}
&
|\langle\mathbf{v}_{hb}\otimes\mathbf{Q}_k^{b}\mathbf{u}\cdot\mathbf{n}
,\mathbf{P}_k^{\mathcal{RT}}\mathbf{u}\rangle_{\partial_{\mathcal{T}_h}}|
=|\langle\mathbf{v}_{hb}\otimes\mathbf{Q}_k^{b}\mathbf{u}\cdot\mathbf{n}
,\mathbf{P}_k^{\mathcal{RT}}\mathbf{u}-\mathbf{Q}_k^{b}\mathbf{u}\rangle_{\partial_{\mathcal{T}_h}}|\\
\leq &
|\langle(\mathbf{v}_{ho}-\mathbf{v}_{hb})\otimes(\mathbf{Q}_k^{b}\mathbf{u}-
\mathbf{Q}_k^{o}\mathbf{u})\cdot\mathbf{n}
,\mathbf{P}_k^{\mathcal{RT}}\mathbf{u}-\mathbf{Q}_k^{b}\mathbf{u}\rangle_{\partial_{\mathcal{T}_h}}|
+|\langle\mathbf{v}_{ho}\otimes
(\mathbf{Q}_k^{b}\mathbf{u}-
\mathbf{Q}_k^{o}\mathbf{u})\cdot\mathbf{n}
,\mathbf{P}_k^{\mathcal{RT}}\mathbf{u}-\mathbf{Q}_k^{b}\mathbf{u}\rangle_{\partial_{\mathcal{T}_h}}|
\\
&\ \ \ \
+|\langle(\mathbf{v}_{ho}-\mathbf{v}_{hb})\otimes\mathbf{Q}_k^{o}\mathbf{u}\cdot\mathbf{n}
,\mathbf{P}_k^{\mathcal{RT}}\mathbf{u}-\mathbf{Q}_k^{b}\mathbf{u}\rangle_{\partial_{\mathcal{T}_h}}|
+|\langle\mathbf{v}_{ho}\otimes\mathbf{Q}_k^{o}\mathbf{u}\cdot\mathbf{n}
,\mathbf{P}_k^{\mathcal{RT}}\mathbf{u}-\mathbf{Q}_k^{b}\mathbf{u}\rangle_{\partial_{\mathcal{T}_h}}|
\\
\lesssim & h^{k}\|\mathbf{u}\|_2\|\mathbf{u}\|_{k+1}|||\mathbf{v}_h|||_V.
\end{align*}
As a result,  the desired estimate (\ref{lemma18*1}) follows.
\end{proof}

\begin{myTheo}\label{estimates1*}
Let 
$(\mathbf{u}_h,\mathbf{B}_h,p_h,r_h)\in
\mathbf{V}_h^0\times\mathbf{W}_h^0\times Q_h^0\times R_h^0$ be the solution to
the WG scheme (\ref{scheme01*}). 
Under the regularity assumption \eqref{regularity}
and the smallness condition \eqref{3c81*} 
there hold the following estimates:
\begin{eqnarray}
&&|||\Pi_1\mathbf{u}-\mathbf{u}_h|||_V
+|||\Pi_2\mathbf{B}-\mathbf{B}_h|||_W
\lesssim h^kC_1(\mathbf{u},\mathbf{B})  ,\label{est-1}\\
&&|||\Pi_3p-p_h|||_Q+|||\Pi_4r-r_h|||_R\lesssim h^k C_1(\mathbf{u},\mathbf{B})
+h^{2k}C_2(\mathbf{u},\mathbf{B}) ,\label{est-2}
\end{eqnarray}
where $$C_1(\mathbf{u},\mathbf{B}):=\left(\|\mathbf{u}\|_{k+1} +\|\mathbf{B}\|_{k+1}\right)\left(1+||\mathbf{u}||_2+||\mathbf{B}||_2\right), $$
$$C_2(\mathbf{u},\mathbf{B}):=\left(\|\mathbf{u}\|_{k+1} +\|\mathbf{B}\|_{k+1}\right)^2\left(1+||\mathbf{u}||_2+||\mathbf{B}||_2\right)^2 $$

\end{myTheo}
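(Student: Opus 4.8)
The plan is to run the standard energy-plus-inf-sup argument for nonlinear saddle-point problems, splitting the error through the projections: set $\mathbf{e}_u:=\Pi_1\mathbf{u}-\mathbf{u}_h$, $\mathbf{e}_B:=\Pi_2\mathbf{B}-\mathbf{B}_h$, $\mathbf{e}_p:=\Pi_3 p-p_h$, $\mathbf{e}_r:=\Pi_4 r-r_h$. A preliminary observation I would record is that the projected velocity and magnetic field are themselves discretely divergence-free, i.e.\ $\Pi_1\mathbf{u}\in\mathbf{\bar V}_h$ and $\Pi_2\mathbf{B}\in\mathbf{\bar W}_h$; this follows from \eqref{divP_k}, Lemma \ref{lemma8*}, and the very computation already carried out in Lemma \ref{lemma21*} giving $b_{h}(\Pi_1\mathbf{u},q_h)=0$ and $\tilde b_{h}(\Pi_2\mathbf{B},\theta_h)=0$. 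Hence $\mathbf{e}_u\in\mathbf{\bar V}_h$ and $\mathbf{e}_B\in\mathbf{\bar W}_h$, and the boundedness bound \eqref{3c41*} together with the projection estimates of Lemma \ref{lemma9*} gives $|||\Pi_1\mathbf{u}|||_V\lesssim\|\mathbf{u}\|_1$, $|||\Pi_2\mathbf{B}|||_W\lesssim\|\mathbf{B}\|_1$, and uniform control of $|||\mathbf{u}_h|||_V,|||\mathbf{B}_h|||_W$.

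To prove \eqref{est-1} I would restrict the consistency identity \eqref{d1*} to $(\mathbf{v}_h,\mathbf{w}_h)\in\mathbf{\bar V}_h\times\mathbf{\bar W}_h$ with $q_h=\theta_h=0$, so that all $b_{h}$ and $\tilde b_{h}$ contributions vanish, and subtract the reduced scheme \eqref{3c1*}. Expanding each trilinear difference by the telescoping identities $c_{h}(\Pi_1\mathbf{u};\Pi_1\mathbf{u},\cdot)-c_{h}(\mathbf{u}_h;\mathbf{u}_h,\cdot)=c_{h}(\mathbf{e}_u;\Pi_1\mathbf{u},\cdot)+c_{h}(\mathbf{u}_h;\mathbf{e}_u,\cdot)$ and the analogous splittings for the two $\tilde c_h$-terms produces the error equation. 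Taking $\mathbf{v}_h=\mathbf{e}_u$, $\mathbf{w}_h=\mathbf{e}_B$ and using the skew-symmetry $c_{h}(\mathbf{u}_h;\mathbf{e}_u,\mathbf{e}_u)=0$ from \eqref{Stability conditions:sub4-1}, the coercivity relations \eqref{Stability conditions:sub2} and \eqref{Stability conditions:sub4} put $\tfrac{1}{H_a^2}|||\mathbf{e}_u|||_V^2+\tfrac{1}{R_m^2}|||\mathbf{e}_B|||_W^2$ on the left. Every surviving nonlinear term then has exactly one bounded factor ($\Pi_1\mathbf{u}$, $\Pi_2\mathbf{B}$, $\mathbf{u}_h$, or $\mathbf{B}_h$) and two error factors, so by \eqref{Stability conditions:sub5}, \eqref{Stability conditions:sub6} and \eqref{3c41*} it is controlled by $\delta\,(H_a\|\mathbf{f}\|_h+\|\mathbf{g}\|_{\tilde h})\,(|||\mathbf{e}_u|||_V+|||\mathbf{e}_B|||_W)^2$, while the consistency functionals $E_u,E_B,E_{\tilde u},E_{\tilde B1},E_{\tilde B2}$ are bounded by Lemma \ref{lemma18*} by $h^{k}C_1(\mathbf{u},\mathbf{B})(|||\mathbf{e}_u|||_V+|||\mathbf{e}_B|||_W)$, the factor $1+\|\mathbf{u}\|_2+\|\mathbf{B}\|_2$ in $C_1$ coming precisely from the $\|\mathbf{u}\|_2,\|\mathbf{B}\|_2$ weights on the convective and coupling estimates. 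The smallness condition \eqref{3c81*} then lets me absorb the quadratic error term into the left-hand side — exactly as in the uniqueness argument of Lemma \ref{results211*} — yielding \eqref{est-1}.

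For the pressure and pseudo-pressure bound \eqref{est-2} I would invoke the inf-sup inequalities \eqref{inf-sup-bh} and \eqref{inf-sup-tildebh}. Bounding $|||\mathbf{e}_p|||_Q\lesssim\sup_{\mathbf{v}_h}b_{h}(\mathbf{v}_h,\mathbf{e}_p)/|||\mathbf{v}_h|||_V$, I extract $b_{h}(\mathbf{v}_h,\mathbf{e}_p)$ from the \emph{full} (not necessarily divergence-free) momentum error equation obtained by subtracting \eqref{scheme0101*-a} from the $\mathbf{v}_h$-part of \eqref{d1*}, which expresses it through $a_{h}(\mathbf{e}_u,\mathbf{v}_h)$, the nonlinear differences, and $E_u+E_{\tilde u}+E_{\tilde B1}$. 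The terms linear in the error are controlled by \eqref{est-1}, contributing $h^{k}C_1$, whereas the genuinely quadratic cross-terms $c_{h}(\mathbf{e}_u;\mathbf{e}_u,\cdot)$ and $\tilde c_{h}(\mathbf{e}_u;\mathbf{e}_B,\cdot)$-type expressions contribute $(|||\mathbf{e}_u|||_V+|||\mathbf{e}_B|||_W)^2\lesssim h^{2k}C_2$; the consistency errors add a further $h^{k}C_1$. This gives \eqref{est-2} for the pressure, and the identical argument with \eqref{inf-sup-tildebh} and $E_B+E_{\tilde B2}$ handles $|||\mathbf{e}_r|||_R$.

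I expect the main obstacle to be the absorption step in \eqref{est-1}: one must verify that after expanding all three trilinear differences and using $c_{h}(\cdot;\mathbf{v},\mathbf{v})=0$, the residual nonlinearity is purely quadratic in $(\mathbf{e}_u,\mathbf{e}_B)$ with coefficient bounded \emph{exactly} by the quantity $\delta(H_a\|\mathbf{f}\|_h+\|\mathbf{g}\|_{\tilde h})$ appearing in \eqref{3c81*}, so that the smallness hypothesis closes the estimate; keeping track of the MHD coupling terms $\tilde c_h(\mathbf{e}_u;\mathbf{B}_h,\mathbf{e}_B)$ and $\tilde c_h(\mathbf{u}_h;\mathbf{e}_B,\mathbf{e}_B)$ so that none of them is merely linear in the error is the delicate bookkeeping here.
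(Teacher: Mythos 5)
Your proposal is correct and follows essentially the same route as the paper: the error equation from the consistency identity of Lemma \ref{lemma21*}, testing with the errors, telescoping the trilinear terms and using $c_h(\cdot;\mathbf{v},\mathbf{v})=0$, bounding the consistency functionals via Lemma \ref{lemma18*}, absorbing the quadratic remainder with \eqref{3c41*} and the smallness condition \eqref{3c81*}, and then the inf-sup inequalities for \eqref{est-2}. The only cosmetic difference is that you restrict to $\mathbf{\bar V}_h\times\mathbf{\bar W}_h$ with $q_h=\theta_h=0$, whereas the paper tests the full system with the error quadruple and lets the $b_h$ and $\tilde b_h$ terms cancel pairwise; the two are equivalent.
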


\begin{proof}
From   (\ref{scheme01*}) and Lemma \ref{lemma21*},
we can get the  error equation
\begin{align*}
 &a_{h}(\Pi_1\mathbf{u}-\mathbf{u}_h,\mathbf{v}_h)
+\tilde{a}_{h}(\Pi_2\mathbf{B}-\mathbf{B}_h,\mathbf{w}_h)
+b_{h}(\mathbf{v}_h,\Pi_3p-p_h)\nonumber\\
&\ \  -b_{h}(\Pi_1\mathbf{u}-\mathbf{u}_h,q_h)
+\tilde{b}_{h}(\mathbf{w}_h,\Pi_4r-r_h)-\tilde{b}(\Pi_2\mathbf{B}-\mathbf{B}_h,\theta_h)\nonumber\\
&\ \  +c_{h}(\Pi_1\mathbf{u};\Pi_1\mathbf{u},\mathbf{v}_h)
-c_{h}(\mathbf{u}_h;\mathbf{u}_h,\mathbf{v}_h)
+\tilde{c}_{h}(\mathbf{v}_h;\Pi_2\mathbf{B};\Pi_2\mathbf{B})\nonumber\\
&\ \  -\tilde{c}_{h}(\mathbf{v}_h;\mathbf{B}_h,\mathbf{B}_h)
-\tilde{c}_{h}(\Pi_1\mathbf{u};\Pi_2\mathbf{B},\mathbf{w}_h)
+\tilde{c}_{h}(\mathbf{u}_h;\mathbf{B}_h,\mathbf{w}_h)
\nonumber\\
=&E_u({\mathbf{u},\mathbf{v}_h})+E_B({\mathbf{B},\mathbf{w}_h})
+E_{\tilde{u}}(\mathbf{u},\mathbf{v}_h)
+E_{\tilde{B}1}(\mathbf{B}_h,\mathbf{v}_h)
+E_{\tilde{B}2}(\mathbf{u};\mathbf{B},\mathbf{w}_h),
\end{align*}
for any  $(\mathbf{v}_h,\mathbf{w}_h,q_h,\theta_h)\in\mathbf{V}_h^0\times\mathbf{W}_h^0\times Q_h^0\times R_h^0$.
Taking $(\mathbf{v}_h,\mathbf{w}_h,q_h,\theta_h)
=(\Pi_1\mathbf{u}-\mathbf{u}_h,\Pi_2\mathbf{B}-\mathbf{B}_h,
\Pi_3p-p_h,\Pi_4r-r_h)$ in this relation   and using 
Lemma \ref{lemma13*} we get
\begin{align*}
&
\frac{1}{H_a^2}|||\Pi_1\mathbf{u}-\mathbf{u}_h|||_V^2
+\frac{1}{R_m^2}|||\Pi_2\mathbf{B}-\mathbf{B}_h|||_W^2\nonumber\\
=&a_{h}(\Pi_1\mathbf{u}-\mathbf{u}_h,\Pi_1\mathbf{u}-\mathbf{u}_h)
+\tilde{a}_{h}(\Pi_2\mathbf{B}-\mathbf{B}_h,\Pi_2\mathbf{B}-\mathbf{B}_h)\\
=&E_u(\mathbf{u},\Pi_1\mathbf{u}-\mathbf{u}_h)+E_B({\mathbf{B},\Pi_2\mathbf{B}-\mathbf{B}_h})
+E_{\tilde{u}}(\mathbf{u},\Pi_1\mathbf{u}-\mathbf{u}_h)\nonumber\\
 &\ \
+E_{\tilde{B}1}(\mathbf{B}_h,\Pi_1\mathbf{u}-\mathbf{u}_h)
+E_{\tilde{B}2}(\mathbf{u};\mathbf{B},\Pi_2\mathbf{B}-\mathbf{B}_h)
\nonumber\\
 & \ \
-c_{h}(\Pi_1\mathbf{u};\Pi_1\mathbf{u},\Pi_1\mathbf{u}-\mathbf{u}_h)
+c_{h}(\mathbf{u}_h;\mathbf{u}_h,\Pi_1\mathbf{u}-\mathbf{u}_h)
-\tilde{c}_{h}(\Pi_1\mathbf{u}-\mathbf{u}_h;\Pi_2\mathbf{B},\Pi_2\mathbf{B})
\nonumber\\
& \ \
+\tilde{c}_{h}(\Pi_1\mathbf{u}-\mathbf{u}_h;\mathbf{B}_h,\mathbf{B}_h)
+\tilde{c}_{h}(\Pi_1\mathbf{u};\Pi_2\mathbf{B},\Pi_2\mathbf{B}-\mathbf{B}_h)
-\tilde{c}_{h}(\mathbf{u}_h;\mathbf{B},\Pi_2\mathbf{B}-\mathbf{B}_h)\nonumber\\
=&E_u(\mathbf{u},\Pi_1\mathbf{u}-\mathbf{u}_h)+E_B({\mathbf{B},\Pi_2\mathbf{B}-\mathbf{B}_h})
+E_{\tilde{u}}(\mathbf{u},\Pi_1\mathbf{u}-\mathbf{u}_h)\nonumber\\
 &\ \  +E_{\tilde{B}1}(\mathbf{B}_h,\Pi_1\mathbf{u}-\mathbf{u}_h)
+E_{\tilde{B}2}(\mathbf{u};\mathbf{B},\Pi_2\mathbf{B}-\mathbf{B}_h)
\nonumber\\
 & \ \
-c_{h}(\Pi_1\mathbf{u}-\mathbf{u}_h;\mathbf{u}_h,\Pi_1\mathbf{u}-\mathbf{u}_h)
\nonumber\\
& \ \
+\tilde{c}_{h}(\mathbf{u}_h;\Pi_2\mathbf{B}-\mathbf{B}_h,\Pi_2\mathbf{B}-\mathbf{B}_h)
-\tilde{c}_{h}(\Pi_1\mathbf{u}-\mathbf{u}_h;\Pi_2\mathbf{B}-\mathbf{B}_h,\mathbf{B}_h),
\end{align*}
where in the last '=' we have used the relation $c_{h}(\Pi_1\mathbf{u};\Pi_1\mathbf{u}-\mathbf{u}_h,\Pi_1\mathbf{u}-\mathbf{u}_h)=0$. In view of Lemma \ref{lemma18*} and the definitions of $M_h$ and $\tilde M_h$ in \eqref{Mh} and  \eqref{tilde-Mh}, we further have
\begin{eqnarray*}
&&
\frac{1}{H_a^2}|||\Pi_1\mathbf{u}-\mathbf{u}_h|||_V^2
+\frac{1}{R_m^2}|||\Pi_2\mathbf{B}-\mathbf{B}_h|||_W^2\nonumber\\
&\leq&
C\left(  h^k\|\mathbf{u}\|_{k+1}|||\Pi_1\mathbf{u}-\mathbf{u}_h|||_V
+ h^k\|\mathbf{B}\|_{k+1}|||\Pi_2\mathbf{B}-\mathbf{B}_h|||_W
+ h^k\|\mathbf{u}\|_2\|\mathbf{u}\|_{k+1}|||\Pi_1\mathbf{u}-\mathbf{u}_h|||_V\right.\\
&&
\left.\quad + h^k\|\mathbf{B}\|_2\|\mathbf{B}\|_{k+1}|||\Pi_1\mathbf{u}-\mathbf{u}_h|||_V
+ h^k(\|\mathbf{u}\|_2\|\mathbf{B}\|_{k+1}+\|\mathbf{B}\|_2\|\mathbf{u}\|_{k+1})
|||\Pi_2\mathbf{B}-\mathbf{B}_h|||_W\right)\\
&&+M_{h}|||\mathbf{u}_h|||_V|||\Pi_1\mathbf{u}-\mathbf{u}_h|||_V^2
+\tilde{M}_{h}|||\mathbf{u}_h|||_V|||\Pi_2\mathbf{B}-\mathbf{B}_h|||_W^2
+\tilde{M}_{h}|||\mathbf{B}_h|||_W|||\Pi_1\mathbf{u}-\mathbf{u}_h|||_V
|||\Pi_2\mathbf{B}-\mathbf{B}_h|||_W,
\end{eqnarray*}
which, together with  (\ref{3c41*}), yields
\begin{eqnarray*}
&&\ \ \ \
|||\Pi_1\mathbf{u}-\mathbf{u}_h|||_V
+|||\Pi_2\mathbf{B}-\mathbf{B}_h|||_W\nonumber\\
&\leq &
2\zeta C\left( h^k||\mathbf{u}||_{k+1}
+ h^k||\mathbf{B}||_{k+1}
+ h^k\|\mathbf{B}\|_2||\mathbf{B}||_{k+1} + h^k\|\mathbf{u}\|_2||\mathbf{u}||_{k+1}
+h^k\|\mathbf{u}\|_2||\mathbf{B}||_{k+1}+h^k\|\mathbf{B}\|_2||\mathbf{u}||_{k+1}\right)
\\
&&\ \ \ \ +2\zeta \left( H_aM_{h}|||\mathbf{u}_h|||_V|||\Pi_1\mathbf{u}-\mathbf{u}_h|||_V
+R_m\tilde{M}_{h}|||\mathbf{u}_h|||_V|||\Pi_2\mathbf{B}-\mathbf{B}_h|||_W\right.\\
&&\ \ \ \
\quad +\left.\frac{1}{2}R_m\tilde{M}_{h}|||\mathbf{B}_h|||_W|||\Pi_1\mathbf{u}-\mathbf{u}_h|||_V
+\frac{1}{2}R_m\tilde{M}_{h}|||\mathbf{B}_h|||_W|||\Pi_2\mathbf{B}-\mathbf{B}_h|||_W\right)\\
& \leq & 2\zeta C  h^k\left(\|\mathbf{u}\|_{k+1} +\|\mathbf{B}\|_{k+1}\right)\left(1+||\mathbf{u}||_2+||\mathbf{B}||_2\right)\\
&&+4\zeta^2(H_a\|\mathbf{f}\|_{h}+ \|\mathbf{g}\|_{\tilde{h}})
\left(H_aM_{h}|||\Pi_1\mathbf{u}-\mathbf{u}_h|||_V
+R_m\tilde{M}_{h}|||\Pi_2\mathbf{B}-\mathbf{B}_h|||_W\right.\\
&&\ \ \ \
\left. \qquad +\frac{1}{2}R_m\tilde{M}_{h}|||\Pi_1\mathbf{u}-\mathbf{u}_h|||_V
+\frac{1}{2}R_m\tilde{M}_{h}|||\Pi_2\mathbf{B}-\mathbf{B}_h|||_W\right)\\
& \leq &
2\zeta Ch^k\left(\|\mathbf{u}\|_{k+1} +\|\mathbf{B}\|_{k+1}\right)\left(1+||\mathbf{u}||_2+||\mathbf{B}||_2\right)\\
&& \quad +4\zeta^2(H_a\|\mathbf{f}\|_{h}+ \|\mathbf{g}\|_{\tilde{h}})
(H_aM_{h}+\frac{1}{2}R_m\tilde{M}_{h}+R_m\tilde{M}_{h}+\frac{1}{2}R_m\tilde{M}_{h})
(|||\Pi_1\mathbf{u}-\mathbf{u}_h|||_V+
|||\Pi_2\mathbf{B}-\mathbf{B}_h|||_W)\\
& \leq & 2\zeta Ch^k\left(\|\mathbf{u}\|_{k+1} +\|\mathbf{B}\|_{k+1}\right)\left(1+||\mathbf{u}||_2+||\mathbf{B}||_2\right)\\
&&\ \ \ \ +12\zeta^3\max\{M_{h}, \tilde{M}_{h}\}(H_a\|\mathbf{f}\|_{h}+ \|\mathbf{g}\|_{\tilde{h}})
(|||\Pi_1\mathbf{u}-\mathbf{u}_h|||_V+
|||\Pi_2\mathbf{B}-\mathbf{B}_h|||_W),
\end{eqnarray*}
Since the smallness condition   (\ref{3c81*}) implies
$$ 12\zeta^3\max\{M_{h}, \tilde{M}_{h}\}(H_a\|\mathbf{f}\|_{h}+ \|\mathbf{g}\|_{\tilde{h}})=\delta\left(H_a\|\mathbf{f}\|_{h}+\|\mathbf{g}\|_{\tilde{h}}\right)<1,$$
  we immediately obtain the desired estimate \eqref{est-1}.

Next let us estimate the pressure error.
Taking $(\mathbf{w}_h,q_h,r_h)=(0,0,0)$ in  the equation \eqref{d1*},   we have
\begin{align*}
a_{h}(\Pi_1\mathbf{u},\mathbf{v}_h)
+b_{h}(\mathbf{v}_h,\Pi_3p)
+c_{h}(\Pi_1\mathbf{u};\Pi_1\mathbf{u},\mathbf{v}_h)
+\tilde{c}_{h}(\mathbf{v}_h;\Pi_2\mathbf{B},\Pi_2\mathbf{B})
=(\mathbf{f},\mathbf{v}_{ho})
+E_u({\mathbf{u},\mathbf{v}_h})
+E_{\tilde{u}}( \mathbf{u},\mathbf{v}_h) ,
 \end{align*}
which, together with   \eqref{scheme0101*-a},   gives
\begin{align*}
 b_{h}(\mathbf{v}_h,\Pi_3p-p_h) =&E_u({\mathbf{u},\mathbf{v}_h})
+E_{\tilde{u}}(\mathbf{u},\mathbf{v}_h)
-a_{h}(\Pi_1\mathbf{u}-\mathbf{u}_h,\mathbf{v}_h)-c_{h}(\Pi_1\mathbf{u};\Pi_1\mathbf{u},\mathbf{v}_h)
+c_{h}(\mathbf{u}_h;\mathbf{u}_h,\mathbf{v}_h)\\
&\quad
-\tilde{c}_{h}(\mathbf{v}_h;\Pi_2\mathbf{B},\Pi_2\mathbf{B})
+\tilde{c}_{h}(\mathbf{v}_h;\mathbf{B}_h,\mathbf{B}_h)\\
=&E_u({\mathbf{u},\mathbf{v}_h})
+E_{\tilde{u}}(\mathbf{u},\mathbf{v}_h)
-a_{h}(\Pi_1\mathbf{u}-\mathbf{u}_h,\mathbf{v}_h)\\
&\quad -c_{h}(\Pi_1\mathbf{u}-\mathbf{u}_h;\Pi_1\mathbf{u}-\mathbf{u}_h,\mathbf{v}_h)-c_{h}(\mathbf{u}_h;\Pi_1\mathbf{u}-\mathbf{u}_h,\mathbf{v}_h)
-c_{h}(\Pi_1\mathbf{u}-\mathbf{u}_h;\mathbf{u}_h,\mathbf{v}_h)\\
&\quad
-\tilde{c}_{h}(\mathbf{v}_h;\Pi_2\mathbf{B}-\mathbf{B}_h,\Pi_2\mathbf{B}-\mathbf{B}_h)
-\tilde{c}_{h}(\mathbf{v}_h;\mathbf{B}_h,\Pi_2\mathbf{B}-\mathbf{B}_h)
-\tilde{c}_{h}(\mathbf{v}_h;\Pi_2\mathbf{B}-\mathbf{B}_h,\mathbf{B}_h).
\end{align*}
Thus, using the inf-sup condition (\ref{inf-sup-bh}), Lemmas \ref{lemma13*} and  \ref{lemma18*}, and the estimate \eqref{est-1}, we get
\begin{align*}
|||\Pi_3p-p_h|||_Q\lesssim&\sup_{0\neq\mathbf{v}_h\in\mathbf{V}_h^0}
\frac{b_{h}(\mathbf{v}_h,\Pi_3p-p_h)}{|||\mathbf{v}_h|||_V}\\
\lesssim&   h^k \left(\|\mathbf{u}\|_{k+1} +\|\mathbf{B}\|_{k+1}\right)\left(1+||\mathbf{u}||_2+||\mathbf{B}||_2\right)+h^{2k} \left(\|\mathbf{u}\|_{k+1} +\|\mathbf{B}\|_{k+1}\right)^2 \left(1+||\mathbf{u}||_2+||\mathbf{B}||_2\right)^2\\
\lesssim& h^k C_1(\mathbf{u},\mathbf{B})
+h^{2k}C_2(\mathbf{u},\mathbf{B}) .
\end{align*}
Similarly,
by using the inf-sup condition (\ref{inf-sup-tildebh}), Lemmas \ref{lemma13*} and  \ref{lemma18*}, and  \eqref{est-1}, we can obtain
$$ |||\Pi_4r-r_h|||_R\lesssim h^k C_1(\mathbf{u},\mathbf{B})
+h^{2k}C_2(\mathbf{u},\mathbf{B}) .$$
Combining the above two inequalities leads to the desired result \eqref{est-2}.
\end{proof}

In light of  Theorem \ref{estimates1*},  Lemmas \ref{lemma3*}, \ref{lemma1*}, \ref{lemma9*} and \ref{lemma17*}, and  the triangle inequality,   we can finally get the following main error estimates.
\begin{myTheo}\label{estimates2*}
Under the same conditions as in Theorem \ref{estimates1*}, there hold
\begin{align}\label{error-est1}
\|\nabla\mathbf{u}-\nabla_h\mathbf{u}_{ho}\|_0+\|\nabla\mathbf{u}-\nabla_{w,k-1}\mathbf{u}_{h}\|_0&\lesssim h^kC_1(\mathbf{u},\mathbf{B}),\\
\label{error-est2}
 \|\nabla\times\mathbf{B}-\nabla_h\times\mathbf{B}_{ho}\|_0
+\|\nabla\times\mathbf{B}-\nabla_{w,k-1}\times\mathbf{B}_{h}\|_0
&\lesssim h^kC_1(\mathbf{u},\mathbf{B}),\\
 \|p-p_{ho}\|_0
+h||\nabla p-\nabla_{w,k}p||_0&\lesssim h^k C_1(\mathbf{u},\mathbf{B})  +h^k||p||_k
+h^{2k}C_2(\mathbf{u},\mathbf{B}),\\
\|r-r_{ho}-(\bar r -\bar r_{ho})\|_0
+h||\nabla r-\nabla_{w,k}r||_0&\lesssim h^k C_1(\mathbf{u},\mathbf{B})  +h^k||r||_k
+h^{2k}C_2(\mathbf{u},\mathbf{B}),
\end{align}
where $\bar r$ and $\bar r_{ho}$ denote the mean values of $r$ and $r_{ho}$ on $\Omega$, respectively.
\end{myTheo}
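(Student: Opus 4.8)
The plan is to post-process the abstract estimates of Theorem~\ref{estimates1*}, which control the projection errors $|||\Pi_1\mathbf{u}-\mathbf{u}_h|||_V$, $|||\Pi_2\mathbf{B}-\mathbf{B}_h|||_W$, $|||\Pi_3p-p_h|||_Q$ and $|||\Pi_4r-r_h|||_R$, into the target norms by inserting the interpolants $\Pi_i$ and invoking the triangle inequality. For each quantity I would split the error as (interpolation error) $+$ (discrete error), bound the first by the approximation properties of the $\mathcal{RT}$ projection and the $L^2$ projections, and bound the second by the relevant triple-bar norm, which is exactly what Theorem~\ref{estimates1*} controls through \eqref{est-1}--\eqref{est-2}.

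For the velocity gradient I would first write $\|\nabla\mathbf{u}-\nabla_h\mathbf{u}_{ho}\|_0\le\|\nabla(\mathbf{u}-\mathbf{P}_k^{\mathcal{RT}}\mathbf{u})\|_0+\|\nabla_h(\mathbf{P}_k^{\mathcal{RT}}\mathbf{u}-\mathbf{u}_{ho})\|_0$. The first summand is $\lesssim h^k|\mathbf{u}|_{k+1}$ by Lemma~\ref{lemma17*}. For the second, since $\Pi_1\mathbf{u}-\mathbf{u}_h=\{\mathbf{P}_k^{\mathcal{RT}}\mathbf{u}-\mathbf{u}_{ho},\mathbf{Q}_k^b\mathbf{u}-\mathbf{u}_{hb}\}$, I would apply the first inequality of Lemma~\ref{lemma3*} on each $K$ and sum (recalling $\tau=h_K^{-1}$), which bounds it by $|||\Pi_1\mathbf{u}-\mathbf{u}_h|||_V\lesssim h^kC_1(\mathbf{u},\mathbf{B})$ via \eqref{est-1}. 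The weak-gradient error $\|\nabla\mathbf{u}-\nabla_{w,k-1}\mathbf{u}_h\|_0$ is handled the same way, except that its interpolation part is now $\|\nabla\mathbf{u}-\nabla_{w,k-1}\Pi_1\mathbf{u}\|_0=\|\nabla\mathbf{u}-\mathbf{Q}_{k-1}^o(\nabla\mathbf{u})\|_0\lesssim h^k|\mathbf{u}|_{k+1}$, using the commutativity $\nabla_{w,k-1}\Pi_1\mathbf{u}=\mathbf{Q}_{k-1}^o(\nabla\mathbf{u})$ from Lemma~\ref{lemma11*} together with Lemma~\ref{lemma1*}, while the discrete part is again $\le|||\Pi_1\mathbf{u}-\mathbf{u}_h|||_V$. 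The two curl estimates \eqref{error-est2} for $\mathbf{B}$ follow verbatim, replacing $\nabla$ by $\nabla\times$, the first inequality of Lemma~\ref{lemma3*} by its curl counterpart in the same lemma, and the velocity commutativity by the curl commutativity of Lemma~\ref{lemma11*}.

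For the pressure I would split $\|p-p_{ho}\|_0\le\|p-Q_{k-1}^op\|_0+\|Q_{k-1}^op-p_{ho}\|_0$; the first term is $\lesssim h^k\|p\|_k$ by Lemma~\ref{lemma1*}, and the second equals the interior component of $\Pi_3p-p_h$, hence is $\le|||\Pi_3p-p_h|||_Q\lesssim h^kC_1+h^{2k}C_2$ by \eqref{est-2}. For the weighted weak-gradient term I would use $\nabla_{w,k}\Pi_3p=\mathbf{Q}_k^o(\nabla p)$ (Lemma~\ref{lemma11*}) and the $h$-weighted part of $|||\cdot|||_Q$, noting that since only $p\in H^k$ is assumed one has $\|\nabla p-\mathbf{Q}_k^o(\nabla p)\|_0\lesssim h^{k-1}|p|_k$, so the explicit factor $h$ restores the order $h^k$. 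The pseudo-pressure estimate is identical in structure, the one extra subtlety being the mean-value subtraction: using $\|f-\bar f\|_0\le\|f\|_0$ one checks that $\|r-r_{ho}-(\bar r-\bar r_{ho})\|_0$ is controlled by $\|r-Q_{k-1}^or\|_0$ plus the mean-subtracted interior part of $\Pi_4r-r_h$, the latter being precisely the first contribution to $|||\Pi_4r-r_h|||_R$. The only place requiring genuine care, and hence the main obstacle, is exactly this bookkeeping of the mean values for $r$ together with the loss of one power of $h$ induced by the low regularity $p,r\in H^k$, which must be compensated by the explicit $h$ multiplying the weak-gradient terms; everything else is a mechanical combination of the approximation lemmas with Theorem~\ref{estimates1*}.
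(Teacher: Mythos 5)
Your proposal is correct and follows essentially the same route as the paper, which dispatches this theorem in one sentence by invoking the triangle inequality together with Theorem \ref{estimates1*} and Lemmas \ref{lemma3*}, \ref{lemma1*}, \ref{lemma9*} and \ref{lemma17*}; your splitting into interpolation error plus discrete error, with the commutativity identities of Lemma \ref{lemma11*} handling the weak-gradient and weak-curl interpolation parts, is exactly the intended argument. Your bookkeeping of the mean-value subtraction for $r$ and of the extra factor $h$ compensating the $H^k$ regularity of $p$ and $r$ is also consistent with the definitions of $|||\cdot|||_Q$ and $|||\cdot|||_R$.
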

\begin{rem}
From the estimates \eqref{error-est1} and \eqref{error-est2} we see that the errors of the velocity  and the magnetic field are  independent of the
pressure and  the magnetic pseudo-pressure. This means that our WG scheme is pressure-robust.
\end{rem}

\section{Local elimination property and iteration scheme}

\subsection{Local elimination}

In this subsection, we shall show that in the WG scheme (\ref{scheme01*}) the  approximations $(\mathbf{u}_{ho},\mathbf{B}_{ho},p_{ho},r_{ho})$ of the velocity, the magnetic field, the
pressure and the magnetic pseudo-pressure
  defined in the interior of elements can be locally eliminated by the using the numerical traces $(\mathbf{u}_{hb},\mathbf{B}_{hb},p_{hb},r_{hb})$ defined  on the boundaries of the elements.
After the local elimination the resulting system  only contains the degrees of freedom of $(\mathbf{u}_{hb},\mathbf{B}_{hb},r_{hb},p_{hb})$ as unknowns.


For any $K\in\mathcal{T}_h$, we take $\mathbf{v}_{ho}|_{\mathcal{T}_h/K}=0$, $\mathbf{v}_{hb}=0$,
$\mathbf{w}_{ho}|_{\mathcal{T}_h/K}=0$, $\mathbf{w}_{hb}=0$,
$q_{ho}|_{\mathcal{T}_h/K}=0$, $q_{hb}=0$,
$\theta_{ho}|_{\mathcal{T}_h/K}=0$, $\theta_{hb}=0$
in  the   scheme (\ref{scheme01*}),  and  obtain the following  local problem:

Find $(\mathbf{u}_{ho},\mathbf{B}_{ho},r_{ho},p_{ho})
\in [\mathcal{P}_k(K)]^d\times[\mathcal{P}_k(K)]^d\times \mathcal{P}_{k-1}(K)\times \mathcal{P}_{k-1}(K)$ such that
\begin{align}\label{local elimination2*}
&a_{h,K}(\mathbf{u}_{ho},\mathbf{v}_{ho})
+\tilde{a}_{h,K}(\mathbf{B}_{ho},\mathbf{w}_{ho})\nonumber\\
&+b_{h,K}(\mathbf{v}_{ho},p_{ho})-b_{h,K}(\mathbf{u}_{ho},q_{ho})
+\tilde{b}_{h,K}(\mathbf{w}_{ho},r_{ho})-\tilde{b}_{h,K}(\mathbf{B}_{ho},\theta_{ho})\nonumber\\
&+c_{h,K}(\mathbf{u}_{ho};\mathbf{u}_{ho},\mathbf{v}_{ho})
+\tilde{c}_{h,K}(\mathbf{v}_{ho};\mathbf{B}_{ho},\mathbf{B}_{ho})
-\tilde{c}_{h,K}(\mathbf{u}_{ho};\mathbf{B}_{ho},\mathbf{w}_{ho})
\nonumber\\
=&\mathbf{F}_{K}(\mathbf{v}_{ho})+\mathbf{G}_{K}(\mathbf{w}_{ho}), \quad \forall (\mathbf{v}_{ho},\mathbf{w}_{ho},\theta_{ho},q_{ho})
\in [\mathcal{P}_k(K)]^d\times[\mathcal{P}_k(K)]^d\times \mathcal{P}_{k-1}(K)\times \mathcal{P}_{k-1}(K),
\end{align}
where
\begin{align*}
&a_{h,K}(\mathbf{u}_{ho},\mathbf{v}_{ho})
:=\frac{1}{H_a^2}\left(\nabla_{w,k-1}\{\mathbf{u}_{ho},\mathbf{0}\},\nabla_{w,k-1}\{\mathbf{v}_{ho},\mathbf{0}\}\right)_K
+s_{h,K}(\mathbf{u}_{ho},\mathbf{v}_{ho}),
\\
&\ \ s_{h,K}(\mathbf{u}_{ho},\mathbf{v}_{ho}):
=\frac{1}{H_a^2}\langle \tau\mathbf{u}_{ho},
\mathbf{v}_{ho}\rangle_{\partial K},
\\
&\tilde{a}_{h,K}(\mathbf{B}_{ho},\mathbf{w}_{ho})
:=\frac{1}{R_m^2}\left(\nabla_{w,k-1}\times\{\mathbf{B}_{ho},\mathbf{0}\},
\nabla_{w,k-1}\times\{\mathbf{w}_{ho},\mathbf{0}\}\right)_K
+\tilde{s}_{h,K}(\mathbf{B}_{ho},\mathbf{w}_{ho}),
\\
&\ \ \tilde{s}_{h,K}(\mathbf{B}_{ho},\mathbf{w}_{ho})
:=\frac{1}{R_m^2}\langle \tau\mathbf{B}_{ho}\times \mathbf{n},
\mathbf{w}_{ho}\times \mathbf{n}\rangle_{\partial K};\\
&b_{h,K}(\mathbf{v}_{ho},q_{ho}):=\left(\nabla_{w,k}\{p_{ho},0\},\mathbf{v}_{ho}\right)_K,\ \
\tilde{b}_{h,K}(\mathbf{w}_{ho},r_{ho}):=\frac{1}{R_m}\left(\nabla_{w,k}\{r_{ho},0\},\mathbf{w}_{ho}\right)_K,
\\&c_{h,K}(\mathbf{u}_{ho};\mathbf{u}_{ho},\mathbf{v}_{ho}):=
 \frac{1}{2N}\left(\nabla_{w,k}
\cdot\{\mathbf{u}_{ho}\otimes\mathbf{u}_{ho},\mathbf{0}\otimes\mathbf{0}\},\mathbf{v}_{ho}\right)_K
-
\frac{1}{2N}\left(\nabla_{w,k}
\cdot\{\mathbf{v}_{ho}\otimes\mathbf{u}_{ho},\mathbf{0}\otimes\mathbf{0}\},\mathbf{u}_{ho}\right)_K,\\
&\tilde{c}_{h,K}(\mathbf{v}_{ho};\mathbf{B}_{ho},\mathbf{B}_{ho}):=
\frac{1}{R_m}\left(\nabla_{w,k}\times\{\mathbf{B}_{ho},\mathbf{0}\},\mathbf{v}_{ho}\times\mathbf{B}_{ho}\right)_K,
\\
&\mathbf{F}_{K}(\mathbf{v}_{ho}):=(\bm{f},\bm{v}_{h0})_K
-\frac{1}{H_a^2}\left(\nabla_{w,k-1}\{\bm{0},\mathbf{u}_{hb}\},\nabla_{w,k-1}\{\mathbf{v}_{ho},\mathbf{0}\}\right)_K
+\frac{1}{H_a^2}\langle \tau\mathbf{u}_{hb},
\mathbf{v}_{ho}\rangle_{\partial K}\\
&\ \ \ \ \ \ \ \ \ \ \ \ \ \ \
-(\nabla_{w,k}\{0,p_{hb}\},\mathbf{v}_{ho})_K
- \frac{1}{2N}\left(\nabla_{w,k}
\cdot\{\mathbf{0}\otimes\mathbf{0},\mathbf{u}_{hb}\otimes\mathbf{u}_{hb}\},\mathbf{v}_{ho}\right)_K,\\
&\mathbf{G}_{K}(\mathbf{w}_{ho}):=\frac{1}{R_m}(\bm{g},\bm{w}_{h0})_K
-\frac{1}{R_m^2}\left(\nabla_{w,k-1}\times\{\mathbf{0},\mathbf{B}_{hb}\},
\nabla_{w,k-1}\times\{\mathbf{w}_{ho},\mathbf{0}\}\right)_K\\
&\ \ \ \ \ \ \ \ \ \ \ \ \ \ \quad
+\frac{1}{R_m^2}\langle \tau\mathbf{B}_{hb}\times \mathbf{n},
\mathbf{w}_{ho}\times \mathbf{n}\rangle_{\partial K}-\frac{1}{R_m}\left(\nabla_{w,k}\{0,r_{hb}\},\mathbf{w}_{ho}\right)_K
.\\
\end{align*}

For any $K\in \mathcal{T}_h$, we define the semi-norms as follows:
\begin{eqnarray*}
&&|||\mathbf{v}_{ho}|||_{D,K}=(\|\nabla_{w,k-1}\{\mathbf{v}_{ho},\mathbf{0}\}\|_{0,K}^2
+\|\tau^{\frac{1}{2}}\mathbf{v}_{ho}\|_{0,\partial K}^2)^{\frac{1}{2}},\\
&&|||\mathbf{w}_{ho}|||_{C,K}:=(\|\nabla_{w,k-1}\times\{\mathbf{w}_{ho},\mathbf{0}\}\|_{0,K}^2
+\|\tau^{\frac{1}{2}}\mathbf{w}_{ho}\times\mathbf{n}\|_{0,\partial K}^2)^{\frac{1}{2}}.
\end{eqnarray*}

By following the same routine as in subsection 4.2, we can derive the existence and uniqueness results for  the local problem (\ref{local elimination2*}).
\begin{myTheo}\label{local elimination3*}
For any $K\in \mathcal{T}_h$  and given numerical traces $\mathbf{u}_{hb}|_{\partial K}, \mathbf{B}_{hb}|_{\partial K}, p_{hb}|_{\partial K}$ and $r_{hb}|_{\partial K}$,
the local problem (\ref{local elimination2*}) admits at least one solution. Moreover,
under the smallness condition
\begin{eqnarray*}
\tilde{\delta}\left(H_a\|\mathbf{F}_K\|_{h}+\|\mathbf{G}_K\|_{\tilde{h}}\right)<1,
\end{eqnarray*}
the problem (\ref{local elimination2*}) admits a unique solution,
where
\begin{align*}
&\tilde{\delta}:= 12\max\{H_a,R_m\}^3
\max\{M_{h,K},\tilde{M}_{h,K}\},\\
&M_{h,K}:=\sup_{\mathbf{0}\neq\Phi_{ho},\mathbf{u}_{ho},\mathbf{v}_{ho}\in \mathbf{\bar{V}}_{h,K}}
\frac{c_{h,K}(\Phi_{ho};\mathbf{u}_{ho},\mathbf{v}_{ho})}
{|||\Phi_{ho}|||_{D,K}|||\mathbf{u}_{ho}|||_{D,K}|||\mathbf{v}_{ho}|||_{D,K}},\\
&\tilde{M}_{h,K}:=\sup_{
\substack{
\mathbf{0}\neq\mathbf{w}_{ho},\mathbf{B}_{ho}\in\mathbf{\bar{W}}_{h,K},\\
\mathbf{0}\neq\mathbf{v}_{ho}\in \mathbf{\bar{V}}_{h,K}}
}
\frac{\tilde{c}_{h,K}(\mathbf{v}_{ho};\mathbf{B}_{ho},\mathbf{w}_{ho})}
{|||\mathbf{w}_{ho}|||_{C,K}|||\mathbf{v}_{ho}|||_{D,K}|||\mathbf{B}_{ho}|||_{C,K}},\\
&\|\mathbf{F}_K\|_h:=\sup_{\mathbf{0}\neq\mathbf{v}_{ho}\in \mathbf{\bar{V}}_{h,K}}\frac{\mathbf{F}_K(\mathbf{v}_{ho})}{|||\mathbf{v}_{ho}|||_{D,K}},\ \ \ \ \
\|\mathbf{G}_K\|_{\tilde{h}}:=\sup_{\mathbf{0}\neq\mathbf{w}_{ho}\in \mathbf{\bar{W}}_{h,K}}\frac{\mathbf{G}_K(\mathbf{w}_{ho})}{|||\mathbf{w}_{ho}|||_{C,K}},\\
&\mathbf{\bar{V}}_{h,K}:=\big\{\mathbf{v}_{ho}\in[\mathcal{P}_k(K)]^d;b_{h,K}(\mathbf{v}_{ho},q_{ho})=0,\forall q_{ho}\in [\mathcal{P}_{k-1}(K)]\big\},\\
&\mathbf{\bar{W}}_{h,K}:=\big\{\mathbf{w}_{ho}\in[\mathcal{P}_k(K)]^d;\tilde{b}_{h,K}(\mathbf{w}_{ho},\theta_{ho})=0,\forall \theta_{ho}\in[\mathcal{P}_{k-1}(K)]\big\},\\
&|||\mathbf{v}_{ho}|||_{D,K}:=(\|\nabla_{w,k-1}\{\mathbf{v}_{ho},\mathbf{0}\}\|_{0,K}^2
+\|\tau^{\frac{1}{2}}\mathbf{v}_{ho}\|_{0,\partial K}^2)^{\frac{1}{2}},\\
 &|||\mathbf{w}_{ho}|||_{C,K}:=(\|\nabla_{w,k-1}\times\{\mathbf{w}_{ho},\mathbf{0}\}\|_{0,K}^2
+\|\tau^{\frac{1}{2}}\mathbf{w}_{ho}\times\mathbf{n}\|_{0,\partial K}^2)^{\frac{1}{2}}.
\end{align*}

\end{myTheo}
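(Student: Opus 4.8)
The plan is to transcribe the argument of subsection 4.2 onto a single element $K$, since the local problem \eqref{local elimination2*} is structurally identical to the global scheme \eqref{scheme01*} with the element-boundary traces frozen into the data $\mathbf{F}_K,\mathbf{G}_K$. First I would record the element-wise analogue of Lemma \ref{lemma13*}: from the definitions one reads off the coercivity identities $a_{h,K}(\mathbf{v}_{ho},\mathbf{v}_{ho})=\frac{1}{H_a^2}|||\mathbf{v}_{ho}|||_{D,K}^2$ and $\tilde{a}_{h,K}(\mathbf{w}_{ho},\mathbf{w}_{ho})=\frac{1}{R_m^2}|||\mathbf{w}_{ho}|||_{C,K}^2$, the corresponding Cauchy--Schwarz bounds, the skew-symmetry $c_{h,K}(\Phi_{ho};\mathbf{v}_{ho},\mathbf{v}_{ho})=0$ (immediate from the antisymmetric definition of $c_{h,K}$), and the trilinear bounds by $M_{h,K}$ and $\tilde{M}_{h,K}$, which are exactly the local suprema defining these constants. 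For this to be meaningful one must check that $|||\cdot|||_{D,K}$ and $|||\cdot|||_{C,K}$ are genuine norms on $\mathbf{\bar{V}}_{h,K}$ and $\mathbf{\bar{W}}_{h,K}$; this is the localized version of Lemma \ref{lemma7w*}: if $|||\mathbf{w}_{ho}|||_{C,K}=0$ then $\nabla\times\mathbf{w}_{ho}=0$ and $\mathbf{w}_{ho}\times\mathbf{n}|_{\partial K}=0$, while $\mathbf{w}_{ho}\in\mathbf{\bar{W}}_{h,K}$ forces $\nabla\cdot\mathbf{w}_{ho}=0$ on $K$, so $\mathbf{w}_{ho}=\nabla\varphi$ with $\varphi$ harmonic and constant on the connected boundary $\partial K$, whence $\mathbf{w}_{ho}=0$; the argument for $|||\cdot|||_{D,K}$ is simpler.

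Next I would establish the two local inf-sup conditions for $b_{h,K}$ and $\tilde{b}_{h,K}$, the element-wise analogues of Lemma \ref{lemma15*}. Since the spaces are finite-dimensional it suffices to check injectivity of $q_{ho}\mapsto\nabla_{w,k}\{q_{ho},0\}$ and $r_{ho}\mapsto\nabla_{w,k}\{r_{ho},0\}$ on $\mathcal{P}_{k-1}(K)$: if $\nabla_{w,k}\{q_{ho},0\}=0$ then $(q_{ho},\nabla\cdot\phi)_K=0$ for all $\phi\in[\mathcal{P}_k(K)]^d$, and surjectivity of $\nabla\cdot\colon[\mathcal{P}_k(K)]^d\to\mathcal{P}_{k-1}(K)$ gives $q_{ho}=0$. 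These inf-sup conditions let me split \eqref{local elimination2*} into the reduced, divergence-free problem of finding $(\mathbf{u}_{ho},\mathbf{B}_{ho})\in\mathbf{\bar{V}}_{h,K}\times\mathbf{\bar{W}}_{h,K}$ with the pressure and pseudo-pressure terms dropped, plus a post-recovery of $p_{ho},r_{ho}$, exactly as in Lemma \ref{lemma15a*}.

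For existence I would follow Lemma \ref{results11*} line by line. Taking the test pair equal to the solution and using coercivity together with the skew-symmetry of $c_{h,K}$ and the cancellations among the $\tilde{c}_{h,K}$ terms yields the a priori bound $|||\mathbf{u}_{ho}|||_{D,K}+|||\mathbf{B}_{ho}|||_{C,K}\le 2\zeta(H_a\|\mathbf{F}_K\|_h+\|\mathbf{G}_K\|_{\tilde{h}})$ just as in \eqref{3c41*}. I then introduce the local solution operator $\mathbb{A}_K$ through the linear problem analogous to \eqref{3c51*} and verify the two Leray--Schauder hypotheses: continuity and compactness of $\mathbb{A}_K$ (automatic here, as all the spaces are finite-dimensional, so the Arzel\'a--Ascoli step is trivial), and boundedness of the fixed-point set $\{(\mathbf{v},\mathbf{w})=\lambda\mathbb{A}_K(\mathbf{v},\mathbf{w}):0\le\lambda\le1\}$, which follows from the same a priori estimate. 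Leray--Schauder then delivers a fixed point, i.e.\ a solution of the reduced problem, and the inf-sup conditions recover $p_{ho},r_{ho}$.

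Finally, uniqueness under the smallness condition mirrors Lemma \ref{results211*}: subtracting two solutions, testing with their difference, and invoking the local stability bounds together with the a priori bound gives $|||\mathbf{u}_{1ho}-\mathbf{u}_{2ho}|||_{D,K}+|||\mathbf{B}_{1ho}-\mathbf{B}_{2ho}|||_{C,K}\le \tilde{\delta}(H_a\|\mathbf{F}_K\|_h+\|\mathbf{G}_K\|_{\tilde{h}})\big(|||\mathbf{u}_{1ho}-\mathbf{u}_{2ho}|||_{D,K}+|||\mathbf{B}_{1ho}-\mathbf{B}_{2ho}|||_{C,K}\big)$, so the assumption $\tilde{\delta}(H_a\|\mathbf{F}_K\|_h+\|\mathbf{G}_K\|_{\tilde{h}})<1$ forces the velocity and magnetic parts to coincide, after which the local inf-sup conditions force $p_{ho},r_{ho}$ to coincide as well. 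The one place that is not a mechanical copy of subsection 4.2 is the verification that the local seminorms are norms and that the local inf-sup conditions hold, i.e.\ re-establishing the structural lemmas on a single element; once these are secured, the Leray--Schauder and contraction arguments go through verbatim. I expect this norm and inf-sup verification to be the main, though modest, obstacle.
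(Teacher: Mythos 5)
Your proposal is correct and follows exactly the route the paper intends: the paper's own ``proof'' of this theorem consists solely of the remark that one follows the same routine as in subsection 4.2, and your element-wise transcription of the coercivity, skew-symmetry, Leray--Schauder and contraction arguments is precisely that routine. The additional care you take in verifying that $|||\cdot|||_{D,K}$ and $|||\cdot|||_{C,K}$ are norms on the local divergence-free subspaces and that the local inf-sup conditions hold is a genuine (and correct) filling-in of details the paper leaves implicit, not a departure from its approach.
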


\subsection{Oseen's iteration scheme}

The WG scheme (\ref{scheme01*}) is nonlinear, and we shall
  adopt the following Oseen's iterative algorithm for it:
given $\mathbf{u}_h^0$ and $\mathbf{B}_h^0$,  find $(\mathbf{u}_h^n,\mathbf{B}_h^n,p_h^n,r_h^n)$ with $n=1,2,...,$ such that
\begin{subequations}\label{iteration scheme1*}
\begin{eqnarray}
&&a_{h}(\mathbf{u}_h^n,\mathbf{v}_h)
+b_{h}(\mathbf{v}_h,p_h^n)-b_{h}(\mathbf{u}_h^n,q_h)
+c_{h}(\mathbf{u}_h^{n-1};\mathbf{u}_h^n,\mathbf{v}_h)
+\tilde{c}_{h}(\mathbf{v}_h;\mathbf{B}_h^{n-1},\mathbf{B}_h^{n})
=(\mathbf{f},\mathbf{v}_{ho}),
\label{iteration scheme1*:sub1}\\
&&
\tilde{a}_{h}(\mathbf{B}_h^n,\mathbf{w}_h)
 +\tilde{b}_{h}(\mathbf{w}_h,r_h^n)
-\tilde{b}_{h}(\mathbf{B}_h^n,\theta_h)
-\tilde{c}_{h}(\mathbf{u}_h^{n-1};\mathbf{B}_h^{n-1},\mathbf{w}_h)
=\frac{1}{R_m}(\mathbf{g},\mathbf{w}_{ho}),
\label{iteration scheme1*:sub2}
\end{eqnarray}
\end{subequations}
for all $(\mathbf{v}_h,\mathbf{w}_h,q_h,\theta_h)\in\mathbf{V}_h^0\times\mathbf{W}_h^0\times Q_h^0
\times R_h^0$.

\begin{rem} Notice that the  above  Oseen's iterative scheme can be rewritten as follows:
given $\mathbf{u}_h^0$ and $\mathbf{B}_h^0$,  for $n=1,2,...$,

\noindent Step 1:  find  $(\mathbf{B}_h^n, r_h^n)$  such that
\begin{eqnarray*}
 && \tilde{a}_{h}(\mathbf{B}_h^n,\mathbf{w}_h)
 +\tilde{b}_{h}(\mathbf{w}_h,r_h^n)
 +\tilde{b}_{h}(\mathbf{B}_h^n,\theta_h)
\\
&=& \frac{1}{R_m}(\mathbf{g},\mathbf{w}_{ho})+ \tilde{c}_{h}( {  \mathbf{u}_h^{n-1}};\mathbf{B}_h^{n-1},\mathbf{w}_h), \quad \forall  ( \mathbf{w}_h, \theta_h)\in \mathbf{W}_h^0
\times R_h^0;
\end{eqnarray*}
Step 2: find $(\mathbf{u}_h^n, p_h^n )$ such that
\begin{eqnarray*}
&& a_{h}(\mathbf{u}_h^n,\mathbf{v}_h)
+b_{h}(\mathbf{v}_h,p_h^n)+b_{h}(\mathbf{u}_h^n,q_h)
 +c_{h}( {  \mathbf{u}_h^{n-1}};\mathbf{u}_h^n,\mathbf{v}_h)
\\
&=& (\mathbf{f},\mathbf{v}_{ho})- \tilde{c}_{h}(\mathbf{v}_h; {  \mathbf{B}_h^{n-1}},\mathbf{B}_h^{n}), \quad \forall (\mathbf{v}_h, q_h )\in\mathbf{V}_h^0 \times Q_h^0.
\end{eqnarray*}
\end{rem}

We have the following convergence result.
\begin{myTheo}\label{iteration scheme2*}
Let $(\mathbf{u}_h,\mathbf{B}_h,p_h,r_h)\in\mathbf{V}_h^0\times\mathbf{W}_h^0\times Q_h^0
\times R_h^0$ be the solution of the WG scheme (\ref{scheme01*}).   Under the condition (\ref{3c81*})
the Oseen's iteration scheme (\ref{iteration scheme1*}) is convergent in the following sense:
\begin{eqnarray*}
\lim_{n\rightarrow \infty}|||\mathbf{u}_h^n-\mathbf{u}_h|||_V=0,\ \
\lim_{n\rightarrow \infty}|||\mathbf{B}_h^n-\mathbf{B}_h|||_W=0,\ \
\lim_{n\rightarrow \infty}|||p_h^n-p_h|||_Q=0,\ \ \
\lim_{n\rightarrow \infty}|||r_h^n-r_h|||_R=0.
\end{eqnarray*}

\end{myTheo}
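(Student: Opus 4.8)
The plan is to track the iteration errors
$\mathbf{e}_u^n:=\mathbf{u}_h^n-\mathbf{u}_h$, $\mathbf{e}_B^n:=\mathbf{B}_h^n-\mathbf{B}_h$, $e_p^n:=p_h^n-p_h$ and $e_r^n:=r_h^n-r_h$, to show that $|||\mathbf{e}_u^n|||_V+|||\mathbf{e}_B^n|||_W$ contracts geometrically, and then to recover the pressure errors from the inf--sup conditions. First I would note that taking $\mathbf{v}_h=\mathbf{0}$ in \eqref{iteration scheme1*:sub1} and $\mathbf{w}_h=\mathbf{0}$ in \eqref{iteration scheme1*:sub2} forces $b_h(\mathbf{u}_h^n,q_h)=0$ and $\tilde b_h(\mathbf{B}_h^n,\theta_h)=0$, so every iterate, and hence every error, lies in $\mathbf{\bar V}_h\times\mathbf{\bar W}_h$. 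Subtracting the split scheme \eqref{scheme0101*} from \eqref{iteration scheme1*} and using the linearity of $c_h,\tilde c_h$ in each slot to split the nonlinear differences, e.g. $c_h(\mathbf{u}_h^{n-1};\mathbf{u}_h^n,\mathbf{v}_h)-c_h(\mathbf{u}_h;\mathbf{u}_h,\mathbf{v}_h)=c_h(\mathbf{u}_h^{n-1};\mathbf{e}_u^n,\mathbf{v}_h)+c_h(\mathbf{e}_u^{n-1};\mathbf{u}_h,\mathbf{v}_h)$ and, for the magnetic coupling, $\tilde c_h(\mathbf{v}_h;\mathbf{B}_h^{n-1},\mathbf{B}_h^n)-\tilde c_h(\mathbf{v}_h;\mathbf{B}_h,\mathbf{B}_h)=\tilde c_h(\mathbf{v}_h;\mathbf{e}_B^{n-1},\mathbf{B}_h^n)+\tilde c_h(\mathbf{v}_h;\mathbf{B}_h,\mathbf{e}_B^n)$, produces the velocity and magnetic error equations.

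Before estimating the errors I would establish that the iterates are uniformly bounded by $\rho:=2\zeta(H_a\|\mathbf{f}\|_h+\|\mathbf{g}\|_{\tilde h})$, arguing by induction on $n$. Testing \eqref{iteration scheme1*:sub2} with $\mathbf{w}_h=\mathbf{B}_h^n\in\mathbf{\bar W}_h$ and using Lemma \ref{lemma13*} gives $\tfrac{1}{R_m^2}|||\mathbf{B}_h^n|||_W^2\le\tfrac{1}{R_m}\|\mathbf{g}\|_{\tilde h}|||\mathbf{B}_h^n|||_W+\tilde M_h|||\mathbf{u}_h^{n-1}|||_V|||\mathbf{B}_h^{n-1}|||_W|||\mathbf{B}_h^n|||_W$, while testing \eqref{iteration scheme1*:sub1} with $\mathbf{v}_h=\mathbf{u}_h^n\in\mathbf{\bar V}_h$ and invoking $c_h(\mathbf{u}_h^{n-1};\mathbf{u}_h^n,\mathbf{u}_h^n)=0$ from \eqref{ch=0} yields a companion bound for $|||\mathbf{u}_h^n|||_V$. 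Inserting the induction hypothesis $|||\mathbf{u}_h^{n-1}|||_V,|||\mathbf{B}_h^{n-1}|||_W\le\rho$ and using that the smallness assumption \eqref{3c81*} controls the quantity $\zeta^3\max\{M_h,\tilde M_h\}\rho$ then returns $|||\mathbf{u}_h^n|||_V,|||\mathbf{B}_h^n|||_W\le\rho$, closing the induction.

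The heart of the argument is the error recursion, and this is where I expect the main difficulty. Testing the magnetic error equation with $\mathbf{w}_h=\mathbf{e}_B^n$ and the velocity error equation with $\mathbf{v}_h=\mathbf{e}_u^n$, the pressure terms drop out because $\mathbf{e}_u^n\in\mathbf{\bar V}_h$, $\mathbf{e}_B^n\in\mathbf{\bar W}_h$, and the leading convective term vanishes since $c_h(\mathbf{u}_h^{n-1};\mathbf{e}_u^n,\mathbf{e}_u^n)=0$ by \eqref{ch=0}. Using Lemma \ref{lemma13*} together with the uniform bound $\rho$, the magnetic identity gives the clean one-step estimate $|||\mathbf{e}_B^n|||_W\lesssim R_m^2\tilde M_h\rho\,(|||\mathbf{e}_u^{n-1}|||_V+|||\mathbf{e}_B^{n-1}|||_W)$. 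The obstacle is the velocity identity, in which the term $\tilde c_h(\mathbf{e}_u^n;\mathbf{B}_h,\mathbf{e}_B^n)$ couples the \emph{current} velocity and magnetic errors; I would resolve this by substituting the just-derived bound for $|||\mathbf{e}_B^n|||_W$ (equivalently, absorbing it via Young's inequality), so that $|||\mathbf{e}_u^n|||_V$ is also controlled by the step $n-1$ errors. Writing $E^n:=|||\mathbf{e}_u^n|||_V+|||\mathbf{e}_B^n|||_W$ and collecting constants exactly as in the uniqueness proof of Lemma \ref{results211*} then yields $E^n\le\delta\left(H_a\|\mathbf{f}\|_h+\|\mathbf{g}\|_{\tilde h}\right)E^{n-1}$, whose factor is strictly less than one by \eqref{3c81*}; hence $E^n\to0$ geometrically, giving the first two claimed limits.

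Finally, for the pressure and pseudo-pressure I would return to the full error equations with general test functions. Isolating $b_h(\mathbf{v}_h,e_p^n)$ in the velocity error equation for arbitrary $\mathbf{v}_h\in\mathbf{V}_h^0$ and bounding the right-hand side by Lemma \ref{lemma13*} gives $b_h(\mathbf{v}_h,e_p^n)\lesssim(E^n+E^{n-1})|||\mathbf{v}_h|||_V$, so the inf--sup inequality \eqref{inf-sup-bh} produces $|||e_p^n|||_Q\lesssim E^n+E^{n-1}\to0$. The identical argument applied to the magnetic error equation, using \eqref{inf-sup-tildebh}, gives $|||e_r^n|||_R\lesssim E^n+E^{n-1}\to0$, which completes the proof.
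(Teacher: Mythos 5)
Your proposal is correct, and it shares the paper's overall skeleton: subtract the iteration from the scheme, test the error equations with the errors themselves, kill the pressure terms and the leading convective term via $c_h(\cdot;\mathbf{v}_h,\mathbf{v}_h)=0$, invoke Lemma \ref{lemma13*} and the smallness condition \eqref{3c81*} to obtain a geometric contraction of $|||e_u^n|||_V+|||e_B^n|||_W$, and then recover $e_p^n$ and $e_r^n$ from the inf--sup inequalities of Lemma \ref{lemma15*}. Where you genuinely diverge is in the treatment of the trilinear differences. The paper adds the two tested equations and rearranges the six trilinear terms into an identity whose right-hand side contains only $-c_h(e_u^{n-1};\mathbf{u}_h,e_u^n)-\tilde c_h(e_u^n;e_B^{n-1},\mathbf{B}_h)+\tilde c_h(\mathbf{u}_h;e_B^{n-1},e_B^n)$, i.e.\ every surviving term is weighted by the \emph{exact} discrete solution, so the paper only ever needs the bound \eqref{3c41*} and never bounds the iterates. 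You instead split as $\tilde c_h(\mathbf{v}_h;\mathbf{B}_h^{n-1},\mathbf{B}_h^n)-\tilde c_h(\mathbf{v}_h;\mathbf{B}_h,\mathbf{B}_h)=\tilde c_h(\mathbf{v}_h;e_B^{n-1},\mathbf{B}_h^n)+\tilde c_h(\mathbf{v}_h;\mathbf{B}_h,e_B^n)$, handle the two equations sequentially, and substitute the magnetic bound into the velocity one; this unavoidably produces terms weighted by $|||\mathbf{u}_h^{n-1}|||_V$ and $|||\mathbf{B}_h^{n-1}|||_W$, and your induction establishing the uniform bound $\rho$ on the iterates is precisely the extra ingredient this requires --- it has no analogue in the paper, and it is in fact the more robust route, since the paper's cleaner identity rests on a cancellation of iterate-weighted terms that is delicate to verify. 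The only caveat is the final bookkeeping: feeding the $|||e_B^n|||_W$ bound into the velocity estimate introduces a contribution quadratic in the data, so the contraction factor you obtain is not literally $\delta\left(H_a\|\mathbf{f}\|_h+\|\mathbf{g}\|_{\tilde h}\right)$ but a quantity still strictly below $1$ under \eqref{3c81*} (possibly after adjusting the numerical constant in $\delta$); since the theorem asserts only convergence, not a specific rate, this is harmless. Your preliminary observation that the iterates lie in $\mathbf{\bar V}_h\times\mathbf{\bar W}_h$ and your inf--sup recovery of the pressure and pseudo-pressure errors match the paper's.
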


\begin{proof}
Denote $e_u^n:=\mathbf{u}_h^n-\mathbf{u}_h,
e_B^n:=\mathbf{B}_h^n-\mathbf{B}_h,
e_p^n:=p_h^n-p_h,
e_r^n:=r_h^n-r_h
$.
Subtracting (\ref{iteration scheme1*}) from (\ref{scheme01*}),
for all $(\mathbf{v}_h,\mathbf{w}_h,q_h,\theta_h)\in\mathbf{V}_h^0\times\mathbf{W}_h^0\times Q_h^0
\times R_h^0$, we have
\begin{align}\label{iteration scheme3*}
&a_{h}(e_u^n,\mathbf{v}_h)+\tilde{a}_{h}(e_B^n,\mathbf{w}_h)
\nonumber\\
&=
-b_{h}(\mathbf{v}_h,e_p^n)+b_{h}(e_u^n,q_h)
+c_{h}(\mathbf{u}_h;\mathbf{u}_h,\mathbf{v}_h)
-c_{h}(\mathbf{u}_h^{n-1};\mathbf{u}_h^n,\mathbf{v}_h)
+\tilde{c}_{h}(\mathbf{v}_h;\mathbf{B}_h,\mathbf{B}_h)
\nonumber\\
&\ \ \ \
-\tilde{c}_{h}(\mathbf{v}_h;\mathbf{B}_h^{n-1},\mathbf{B}_h^{n})\
-\tilde{b}_{h}(\mathbf{w}_h,e_r^n)+\tilde{b}_{h}(e_B^n,\theta_h)
-\tilde{c}_{h}(\mathbf{u}_h;\mathbf{B}_h,\mathbf{w}_h)
+\tilde{c}_{h}(\mathbf{u}_h^{n-1};\mathbf{B}_h^{n-1},\mathbf{w}_h).
\end{align}
Taking $\mathbf{v}_h=e_u^n,
\mathbf{w}_h=e_B^n,
q_h=e_p^n,
\theta_h=e_r^n$ in (\ref{iteration scheme3*}) and using Lemma \ref{lemma13*}, we get
\begin{eqnarray*}\label{iteration scheme4*}
&&\ \ \ \
\frac{1}{H_a^2}|||e_u^n|||_V^2+\frac{1}{R_m^2}|||e_B^n|||_W^2
\nonumber\\
&&=c_{h}(\mathbf{u}_h;\mathbf{u}_h,e_u^n)
-c_{h}(\mathbf{u}_h^{n-1};\mathbf{u}_h^n,e_u^n)
+\tilde{c}_{h}(e_u^n;\mathbf{B}_h,\mathbf{B}_h)
-\tilde{c}_{h}(e_u^n;\mathbf{B}_h^{n-1},\mathbf{B}_h^{n})
-\tilde{c}_{h}(\mathbf{u}_h;\mathbf{B}_h,e_B^n)
+\tilde{c}_{h}(\mathbf{u}_h^{n-1};\mathbf{B}_h^{n-1},e_B^n)
\nonumber\\
&&=-c_{h}(e_u^{n-1};\mathbf{u}_h,e_u^n)
-\tilde{c}_{h}(e_u^n;e_B^{n-1},\mathbf{B}_h)
+\tilde{c}_{h}(\mathbf{u}_h;e_B^{n-1},e_B^{n})
\\
&&
\leq M_{h}|||\mathbf{u}_h|||_V|||e_u^{n-1}|||_V|||e_u^n|||_V
+\tilde{M}_{h}|||e_B^{n-1}|||_W|||e_u^n|||_V|||\mathbf{B}_h|||_W
+\tilde{M}_{h}|||e_B^{n-1}|||_W|||\mathbf{u}_h|||_V|||e_B^{n}|||_W,
\end{eqnarray*}
where in the second '=' we have used the relation $c_{h}(\mathbf{u}_h^{n-1};e_u^n,e_u^n)=0$. Similar to the proof of \eqref{ineq1},
the above estimate plus (\ref{3c41*}) yields
\begin{eqnarray*}
&&
|||e_u^n|||_V+|||e_B^n|||_W\\
&\leq& 2\zeta\left(H_aM_{h}|||\mathbf{u}_h|||_V|||e_u^{n-1}|||_V
+H_a\tilde{M}_{h}|||e_B^{n-1}|||_W|||\mathbf{B}_h|||_W
+R_m\tilde{M}_{h}|||e_B^{n-1}|||_W|||\mathbf{u}_h|||_V\right)\\
&\leq&4\zeta^2(H_aM_{h}+H_a\tilde{M}_{h}+R_m\tilde{M}_{h})
(H_a\|\mathbf{f}\|_{h}+ \|\mathbf{g}\|_{\tilde{h}})(|||e_u^{n-1}|||_V+|||e_B^{n-1}|||_W)\\
&\leq& 12\zeta^3
\max\{M_{h}, \tilde{M}_{h}\}(H_a\|\mathbf{f}\|_{h}+\|\mathbf{g}\|_{\tilde{h}})
(|||e_u^{n-1}|||_V+|||e_B^{n-1}|||_W)\\
&\leq& \delta (H_a\|\mathbf{f}\|_{h}+\|\mathbf{g}\|_{\tilde{h}})
(|||e_u^{n-1}|||_V+|||e_B^{n-1}|||_W)\\
&\leq& \cdots\\
 &\leq& (\delta (H_a\|\mathbf{f}\|_{h}+\|\mathbf{g}\|_{\tilde{h}}))^{n}(|||e_u^0|||_V+|||e_B^0|||_W),
\end{eqnarray*}
which, together with  the smallness condition  (\ref{3c81*}), i.e. $0\leq \delta (H_a\|\mathbf{f}\|_{h}+\|\mathbf{g}\|_{\tilde{h}})<1$, gives
\begin{eqnarray}\label{iteration scheme6*}
\lim_{n\rightarrow \infty}\left(|||\mathbf{u}_h^n-\mathbf{u}_h|||_V+|||\mathbf{B}_h^n-\mathbf{B}_h|||_W\right)
=0.
\end{eqnarray}
From (\ref{iteration scheme3*})  we can get for any $\mathbf{v}_h\in\mathbf{V}_h^0$,
\begin{align*}
b_{h}(\mathbf{v}_h,e_p^n)&=
-a_{h}(e_u^n,\mathbf{v}_h)
+c_{h}(\mathbf{u}_h;\mathbf{u}_h,\mathbf{v}_h)
-c_{h}(\mathbf{u}_h^{n-1};\mathbf{u}_h^n,\mathbf{v}_h)
+\tilde{c}_{h}(\mathbf{v}_h;\mathbf{B}_h,\mathbf{B}_h)
-\tilde{c}_{h}(\mathbf{v}_h;\mathbf{B}_h^{n-1},\mathbf{B}_h^{n})\\
&=
-a_{h}(e_u^n,\mathbf{v}_h)
-c_{h}(e_u^{n-1};\mathbf{u}_h,\mathbf{v}_h)
-c_{h}(e_u^{n-1};e_u^n,\mathbf{v}_h)
-\tilde{c}_{h}(\mathbf{v}_h;e_B^{n-1},\mathbf{B}_h)
-\tilde{c}_{h}(\mathbf{v}_h;e_B^{n-1},e_B^{n}).
\end{align*}
Using the inf-sup condition (\ref{inf-sup-bh}) and Lemma \ref{lemma13*},
we have
\begin{eqnarray*}
&&
|||e_p^n|||_Q\leq\sup_{\mathbf{v}_h\in\mathbf{V}_h^0}\frac{b_{h}(\mathbf{v}_h,e_p^n)}{|||\mathbf{v}_h|||_V}\\
&&\leq \frac1{H_a^2}|||e_u^n|||_V+M_{h}(|||e_u^{n-1}|||_V|||\mathbf{u}_h|||_V
+|||e_u^{n-1}|||_V|||e_u^n|||_V)
+\tilde{M}_{h}(|||e_B^{n-1}|||_W|||\mathbf{B}_h|||_W
+|||e_B^{n-1}|||_W|||e_B^n|||_W),
\end{eqnarray*}
which together with (\ref{iteration scheme6*}), yields
$$\lim_{n\rightarrow \infty}|||p_h^n-p_h|||_Q
=\lim_{n\rightarrow \infty}|||e_p^n|||_Q=0.$$
Similarly, by using the inf-sup condition (\ref{inf-sup-bh}), Lemma \ref{lemma13*} and \eqref{iteration scheme6*}, we can obtain
\begin{eqnarray*}
\lim_{n\rightarrow \infty}|||r_h^n-r_h|||_R=0.
\end{eqnarray*}
This completes the proof.
\end{proof}

\section{Numerical examples}

In this section, we give two 2D numerical examples  to verify the performance
of the WG scheme  (\ref{scheme01*}) for the steady incompressible MHD flow \eqref{mhd1*}.
We apply the  Oseen's iterative scheme with an initial guess
$(\mathbf{u}_{ho}^0,\mathbf{B}_{ho}^0)=(0,0)$ and     the stop criterion
$$\|\mathbf{u}_h^n-\mathbf{u}_h^{n-1}\|_0< 1e-8 $$ in all numerical experiments.

In the examples of the model   \eqref{mhd1*}, we set
$$\Omega=[0,1]^2, H_a= N= R_m=1,$$
and we use regular triangular meshes for the computation (cf. Figure 1).

\begin{figure}[htbp]
\centering
\begin{minipage}[t]{0.7\linewidth}
\centering
\includegraphics[height=3.5cm,width=5cm]{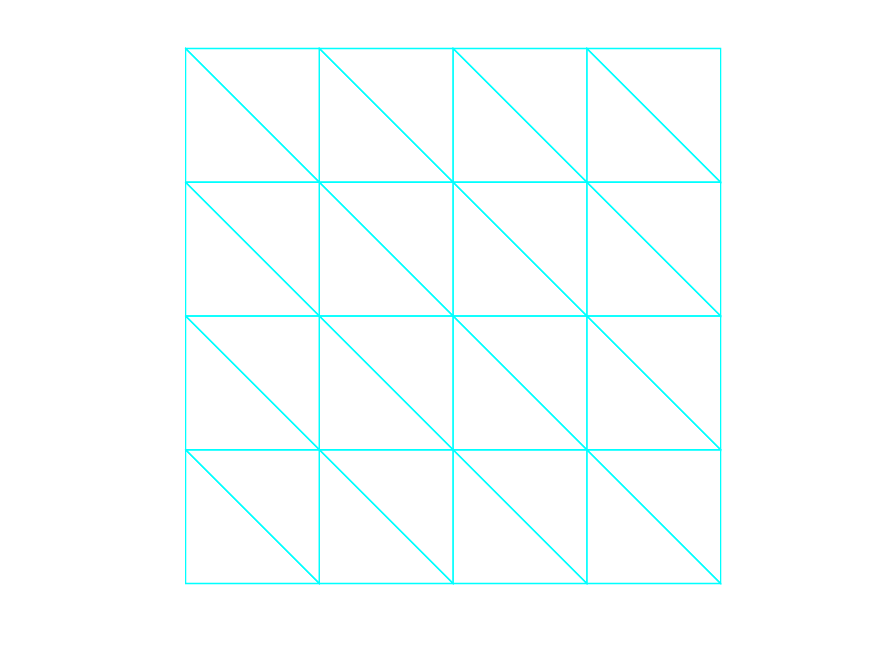}
\caption{The meshes:    $4\times4$ mesh .}
\end{minipage}
\centering
\label{fig1:mesh}
\end{figure}

\begin{exam} \label{exam1}
The exact solution $(\mathbf{u}, \mathbf{B}, p, r)$ is of the form
\begin{eqnarray*}
\left\{\begin {array}{lll}
u_1=-x^2(x-1)^2y(y-1)(2y-1),\ \
u_2=y^2(y-1)^2x(x-1)(2x-1),\\
B_1=-x^2(x-1)^2y(y-1)(2y-1),\ \
B_2=y^2(y-1)^2x(x-1)(2x-1),\\
p=x(x-1)(x-1/2)y(y-1)(y-1/2),\\
r=x(x-1)(x-1/2)y(y-1)(y-1/2).\\
\end{array}\right.
\end{eqnarray*}
We compute  the WG scheme (\ref{scheme01*}) with $k=1,2$. Numerical results are listed in Tables \ref{linetablek=2} and \ref{linetablek=4}.
\end{exam}

\begin{exam} \label{exam2}
The exact solution $(\mathbf{u}, \mathbf{B}, p, r)$ is of the form
\begin{eqnarray*}
\left\{\begin {array}{lll}
u_1=\sin(\pi x)\cos(\pi y),\ \
u_2=-\sin(\pi y)\cos(\pi x),\\
B_1=-x^2(x-1)^2y(y-1)(2y-1),\ \
B_2=y^2(y-1)^2x(x-1)(2x-1),\\
p=x^6-y^6,\ \
r=x(x-1)(x-1/2)y(y-1)(y-1/2).\\
\end{array}\right.
\end{eqnarray*}
We compute  the WG scheme (\ref{scheme01*}) with $k=1,2$. Numerical results are listed in Tables \ref{linetablek=5} and \ref{linetablek=6}.
\end{exam}

Table \ref{linetablek=2} - Table \ref{linetablek=6} show the histories of convergence for the velocity $\mathbf{u}_{ho}$, the magnetic field $\mathbf{B}_{ho}$, the pressure $p_{ho}$, and the magnetic pseudo-pressure $r_{ho}$. Results of $$\text{div} U_h:=\text{Max}_{K\in \mathcal{T}_h}h_K^{-1}||\nabla\cdot\mathbf{u}_{ho}||_{0,K}$$
and $$\text{div} B_h:=\text{Max}_{K\in \mathcal{T}_h}h_K^{-1}||\nabla\cdot\mathbf{B}_{ho}||_{0,K}$$
are also listed  to verify the divergence-free property. From the numerical results of the two examples, we have the following conclusions:
\begin{itemize}
\item The convergence rates of $\|\nabla\mathbf{u}-\nabla_h\mathbf{u}_{ho}\|_0, \quad \|\nabla\mathbf{u}-\nabla_{w,k-1}\mathbf{u}_{h}\|_0, $
$\|\nabla\times\mathbf{B}-\nabla_h\times\mathbf{B}_{ho}\|_0,  \quad \|\nabla\mathbf{B}-\nabla_{w,k-1}\mathbf{B}_{h}\|_0,$
$ \|p-p_{ho}\|_0, \ h\|\nabla p-\nabla_{w,k}p_{h}\|_0$ and  $ h\|\nabla r-\nabla_{w,k}r_{h}\|_0$
  for the  WG scheme with $k=1,2$ are of
   $k^{th}$ orders, which are   consistent with the established theoretical
results in   Theorem \ref{estimates2*}.

\item The convergence rate of $\|r-r_{ho}\|_0$ is also of
$k^{th}$ order.

\item The convergence rates of $\|\mathbf{u}-\mathbf{u}_{ho}\|_0$
and $\|\mathbf{B}-\mathbf{B}_{ho}\|_0$ are of $(k+1)^{th}$ orders.

\item  Based on the facts that
$$\|\nabla_h\cdot\mathbf{u}_{ho} \|_{0,\infty}\lesssim \max\limits_{K\in \mathcal{T}_h} h^{-1}_K\|\nabla\cdot\mathbf{u}_{ho}  \|_{0,K}=\text{div}U_{h},$$
and $$\|\nabla_h\cdot\mathbf{B}_{ho} \|_{0,\infty}\lesssim \max\limits_{K\in \mathcal{T}_h} h^{-1}_K\|\nabla\cdot\mathbf{B}_{ho}  \|_{0,K}=\text{div}B_{h},$$
 we can see that the      the discrete velocity and the discrete magnetic field  are globally divergence-free.
\end{itemize}

\begin{table}[H]
	\normalsize
	\caption{History of convergence results with  $k = 1$ for Example \ref{exam1}}\label{linetablek=2}
	\centering
	\footnotesize
{
		\begin{tabular}{p{1.2cm}<{\centering}|p{1.5cm}<{\centering}|p{1.4cm}<{\centering}|p{1.5cm}<{\centering}
|p{1.4cm}<{\centering}|p{1.5cm}<{\centering}|p{1.4cm}<{\centering}|p{1.5cm}<{\centering}}
			\hline
			\multirow{2}{*}{mesh}&
			\multicolumn{2}{c|}{$\frac{\|\mathbf{u}-\mathbf{u}_{ho}\|_0}{\|\mathbf{u}\|_0}$}
&\multicolumn{2}{c|}{$\frac{\|\nabla\mathbf{u}-\nabla_{w,k-1}\mathbf{u}_{h}\|_0}{\|\nabla\mathbf{u}\|_0}$}
&\multicolumn{2}{c|}{$\frac{\|\nabla\mathbf{u}-\nabla_{h}\mathbf{u}_{ho}\|_0}{\|\nabla\mathbf{u}\|_0}$}
&\multirow{2}{*}{$div Uh$}
\cr\cline{2-7}
			&error&order&error&order&error&order\cr
			\cline{1-8}
$ 2\times 2 $  & 2.3094e+00   &       -  & 9.0044e-01   &       - &  1.2207e+00   &       - &3.4001e-16
\\
\hline
$ 4\times 4 $ &  5.6715e-01  & 1.87 &  5.1165e-01 &  0.77&   6.4277e-01  & 0.92 &3.0878e-16
\\
\hline
$ 8\times 8 $ &  1.5224e-01  & 1.90 &  2.7237e-01  & 0.91&   3.2258e-01  & 0.99 &3.4348e-16
\\
\hline
$ 16\times 16 $ &  3.9918e-02  & 1.93  & 1.3841e-01  & 1.00 &  1.6098e-01 &  1.00 &1.3878e-17
\\
\hline
$ 32\times 32 $ &  1.0236e-02  & 1.96 &  6.9404e-02 &  1.00 &  8.0401e-02 &  1.00 &3.8511e-16
\\
\hline
$ 64\times 64 $ &  2.5908e-03 &  1.98 &  3.4720e-02  & 1.00 &  4.0183e-02 &  1.00 &3.4153e-14
\\
\hline
$ 128\times 128 $  & 6.5160e-04  & 1.99  & 1.7363e-02  & 1.00 &  2.0089e-02 &  1.00 &2.9676e-13
\\
\hline

		\end{tabular}
	}
	
{
		
		\begin{tabular}{p{1.2cm}<{\centering}|p{1.5cm}<{\centering}|p{1.4cm}<{\centering}|p{1.5cm}<{\centering}
|p{1.4cm}<{\centering}|p{1.5cm}<{\centering}|p{1.4cm}<{\centering}|p{1.5cm}<{\centering}}
            \hline
			\multirow{2}{*}{mesh}&
			\multicolumn{2}{c|}{$\frac{\|\mathbf{B}-\mathbf{B}_{ho}\|_0}{\|\mathbf{B}\|_0}$}
&\multicolumn{2}{c|}{$\frac{\|\nabla\times\mathbf{B}-\nabla_{w,k-1}\times\mathbf{B}_{h}\|_0}{\|\nabla\mathbf{B}\|_0}$}
&\multicolumn{2}{c|}{$\frac{\|\nabla\times\mathbf{B}-\nabla_{h}\times\mathbf{B}_{ho}\|_0}{\|\nabla\mathbf{B}\|_0}$}
&\multirow{2}{*}{$div Bh$}
\cr\cline{2-7}
			&error&order&error&order&error&order\cr
			\cline{1-8}
$ 2\times 2 $ &  4.7569e+00  &        -   &9.8373e-01  &        - &  1.5806e+00   &  - &4.8572e-17
\\
\hline
$ 4\times 4 $  & 1.1606e+00 &  2.04 &  5.6441e-01 &  0.82 &  8.2061e-01  & 0.94 &7.9797e-16
\\
\hline
$ 8\times 8 $ &  2.9482e-01  & 1.98  & 2.7675e-01  & 1.03&  3.7929e-01  & 1.11 &4.0939e-16
\\
\hline
$ 16\times 16 $ &  7.4064e-02  & 1.99   &1.3198e-01 &  1.07 &  1.6468e-01 &  1.20 &4.1633e-16
\\
\hline
$ 32\times 32 $  & 1.8525e-02  & 2.00 &  6.4624e-02 &  1.03   &7.5900e-02 &  1.11 &5.6899e-15
\\
\hline
$ 64\times 64 $ &  4.6309e-03 &  2.00  & 3.2115e-02 &  1.01 &  3.6869e-02  & 1.04 &2.0886e-14
\\
\hline
$ 128\times 128 $  & 1.1577e-03 &  2.00 &  1.6032e-02  & 1.00 &  1.8283e-02 &  1.01 &2.9830e-13
\\
\hline
		\end{tabular}
	}

{

		\begin{tabular}{p{1.2cm}<{\centering}|p{1.5cm}<{\centering}|p{0.9cm}<{\centering}|p{1.5cm}<{\centering}
|p{0.9cm}<{\centering}|p{1.5cm}<{\centering}|p{1.0cm}<{\centering}|p{1.5cm}<{\centering}|p{1.0cm}<{\centering}}
     \hline
			\multirow{2}{*}{mesh}&
			\multicolumn{2}{c|}{$\frac{\| p-p_{ho}\|_0}{\| p\|_0}$}
&\multicolumn{2}{c|}{$\frac{h\|\nabla p-\nabla_{w,k}p_{h}\|_0}{\|\nabla r\|_0}$}
&\multicolumn{2}{c|}{$\frac{\|r-r_{ho}\|_0}{\| r\|_0}$}
&\multicolumn{2}{c}{$\frac{h\|\nabla r-\nabla_{w,k}r_{h}\|_0}{\|\nabla r\|_0}$}
\cr\cline{2-9}
			&error&order&error&order&error&order&error&order\cr
			\cline{1-9}
$ 2\times 2 $&  7.0956e+00   & - & 6.2931e-01    &      - &  4.7146e+00  &    - & 7.1008e-01   &       - 
\\
\hline
$ 4\times 4 $ &   4.4806e-01  & 0.73 &2.5901e-01 &  1.20 &  2.8776e+00  & -0.05 & 4.4508e-01 &  0.70 
\\
\hline
$ 8\times 8 $ &   2.3643e-01  & 0.92&  1.2598e-01  & 1.03&   1.4532e-01 &   0.71 &  2.0693e-01 &  1.10 
\\
\hline
$ 16\times 16 $ &  1.1985e-01  & 0.98&6.2587e-02  & 1.00 &  7.2538e-01   & 0.98 & 9.2089e-02 &  1.16 
\\
\hline
$ 32\times 32 $ &  6.0129e-02 &  1.00& 3.1264e-02 &  1.00  &3.6249e-01   &  1.00 & 4.5880e-02 &  1.00 
\\
\hline
$ 64\times 64 $ &  3.0089e-02  & 1.00&  1.5628e-02  & 1.00&  1.8122e-02  &  1.00 &  2.5771e-02 &  0.89
\\
\hline
$ 128\times 128 $ &  1.5048e-02   &1.00 & 7.8132e-03 &  1.00 &  5.4339e-03  &   1.00&  1.5907e-02 &  0.90 
\\
\hline
		\end{tabular}
	}	
\end{table}

\begin{table}[H]
	\normalsize
	\caption{History of convergence results with  $k = 2$ for Example \ref{exam1}}\label{linetablek=4}
	\centering
	\footnotesize
{
		\begin{tabular}{p{1.2cm}<{\centering}|p{1.5cm}<{\centering}|p{1.4cm}<{\centering}|p{1.5cm}<{\centering}
|p{1.4cm}<{\centering}|p{1.5cm}<{\centering}|p{1.4cm}<{\centering}|p{1.5cm}<{\centering}}
			\hline
			\multirow{2}{*}{mesh}&
			\multicolumn{2}{c|}{$\frac{\|\mathbf{u}-\mathbf{u}_{ho}\|_0}{\|\mathbf{u}\|_0}$}
&\multicolumn{2}{c|}{$\frac{\|\nabla\mathbf{u}-\nabla_{w,k-1}\mathbf{u}_{h}\|_0}{\|\nabla\mathbf{u}\|_0}$}
&\multicolumn{2}{c|}{$\frac{\|\nabla\mathbf{u}-\nabla_{h}\mathbf{u}_{ho}\|_0}{\|\nabla\mathbf{u}\|_0}$}
&\multirow{2}{*}{$div Uh$}
\cr\cline{2-7}
			&error&order&error&order&error&order\cr
			\cline{1-8}
			
$2\times 2$  & 4.0948e-01  &        -   &4.5399e-01 &     - & 8.7670e-01  &        - &1.9385e-17
\\
\hline
$4\times 4$  & 5.6888e-02 &  2.85 &  1.3014e-01   &1.80 &  2.4354e-01  & 1.84    &8.4007e-15
\\
\hline
$8\times 8$  & 7.3768e-03 &  2.95 &  3.4894e-02  & 1.89  & 6.2798e-02 &  1.95 &4.6346e-15
\\
\hline
$16\times 16$  & 9.3177e-04 &  2.99 &  8.9496e-03  & 1.96 & 1.5637e-02  & 2.00 &1.7873e-15
\\
\hline
$32\times 32$  & 1.1713e-04 &  2.99  & 2.2591e-03 &  1.99 &  3.8806e-03 &  2.01 &9.7438e-15
\\
\hline
$64\times 64$  & 1.4693e-05&   3.00 &  5.6704e-04  & 2.00 & 9.6538e-04  & 2.00  &5.2180e-14
\\
\hline
$128\times 128$  & 1.8403e-06 &  2.99 &  1.4201e-04 &  2.00 &  2.4068e-04  & 2.00 &6.9921e-13
\\
\hline

		\end{tabular}
	}
	
{
		\begin{tabular}{p{1.2cm}<{\centering}|p{1.5cm}<{\centering}|p{1.4cm}<{\centering}|p{1.5cm}<{\centering}
|p{1.4cm}<{\centering}|p{1.5cm}<{\centering}|p{1.4cm}<{\centering}|p{1.5cm}<{\centering}}
            \hline
			\multirow{2}{*}{mesh}&
			\multicolumn{2}{c|}{$\frac{\|\mathbf{B}-\mathbf{B}_{ho}\|_0}{\|\mathbf{B}\|_0}$}
&\multicolumn{2}{c|}{$\frac{\|\nabla\times\mathbf{B}-\nabla_{w,k-1}\times\mathbf{B}_{h}\|_0}{\|\nabla\mathbf{B}\|_0}$}
&\multicolumn{2}{c|}{$\frac{\|\nabla\times\mathbf{B}-\nabla_{h}\times\mathbf{B}_{ho}\|_0}{\|\nabla\mathbf{B}\|_0}$}
&\multirow{2}{*}{$div Bh$}
\cr\cline{2-7}
			&error&order&error&order&error&order\cr
			\cline{1-8}
$2\times 2$  & 7.3927e-01   &       -  &5.9051e-01  &        -  & 6.4000e+00  &       - &3.8289e-16
\\
\hline
$4\times 4$  & 1.9191e-01  & 1.95  & 1.2959e-01 &  2.19  & 1.3377e+00 &  2.26 &1.8881e-16
\\
\hline
$8\times 8$ &  3.2161e-02  & 2.58  & 2.8450e-02 &  2.19   &3.3459e-01  & 2.00 &2.0293e-15
\\
\hline
$16\times 16$ &  4.5933e-03 &  2.81 &  7.0130e-03   &2.02 &  8.5809e-02&   1.96 &2.9827e-15
\\
\hline
$32\times 32$  & 6.0891e-04  & 2.92 &  1.7591e-03  & 2.00 &  2.1581e-02 &  1.99&5.4936e-15
\\
\hline
$64\times 64$  & 7.8185e-05 &  2.96  & 4.4098e-04  & 2.00 &  5.3937e-03   &2.00 &6.0488e-14
\\
\hline
$128\times 128$ &  9.8991e-06 &  2.98 &  1.1041e-04 &  2.00 &  1.3468e-03 &  2.00 &6.7897e-13
\\
\hline
\end{tabular}
}
{
		\begin{tabular}{p{1.2cm}<{\centering}|p{1.5cm}<{\centering}|p{0.9cm}<{\centering}|p{1.5cm}<{\centering}
|p{0.9cm}<{\centering}|p{1.5cm}<{\centering}|p{1.0cm}<{\centering}|p{1.5cm}<{\centering}|p{1.0cm}<{\centering}}
     \hline
			\multirow{2}{*}{mesh}&
			\multicolumn{2}{c|}{$\frac{\| p-p_{ho}\|_0}{\| p\|_0}$}
&\multicolumn{2}{c|}{$\frac{h\|\nabla p-\nabla_{w,k}p_{h}\|_0}{\|\nabla r\|_0}$}
&\multicolumn{2}{c|}{$\frac{\|r-r_{ho}\|_0}{\| r\|_0}$}
&\multicolumn{2}{c}{$\frac{h\|\nabla r-\nabla_{w,k}r_{h}\|_0}{\|\nabla r\|_0}$}
\cr\cline{2-9}
			&error&order&error&order&error&order&error&order\cr
			\cline{1-9}
$ 2\times 2 $ & 6.2098e+00       &   -&  1.6225e+01    &      - & 6.4000e+00  &       - & 1.6227e+01   &       - 
\\
\hline
$ 4\times 4 $ &  6.6197e-02  & 1.69 & 3.0672e+00  & 1.99 &1.3377e+00 &  2.26  & 4.0583e+00 &  1.99 
\\
\hline
$ 8\times 8 $  & 1.7485e-02   &1.92 &  1.8913e+00 &  1.93  &3.3459e-01  & 2.00 &1.0203e+00  & 1.99 
\\
\hline
$ 16\times 16 $  & 4.4316e-03  & 1.98&  3.8976e-01 &  1.96 &  5.3937e-02   &2.00&  2.6077e-01 &  1.96 
\\
\hline
$ 32\times 32 $ &  1.1117e-03 &  1.99 &6.2237e-02 &  1.90  &  8.5809e-02&   1.96 & 7.0588e-02 &  1.88 
\\
\hline
$ 64\times 64 $ &  2.7815e-04 &  1.99&  3.0583e-02  & 1.91&  2.1581e-02 &  1.99 &  2.2239e-02 &  1.89
\\
\hline
$ 128\times 128 $ &  6.9553e-05 &  1.99  &  8.7568e-03 &  1.89  &  5.3937e-03   &2.00& 8.7574e-03 &  1.94 
\\
\hline
		\end{tabular}
	}
\end{table}

\begin{table}[H]
	\normalsize
	\caption{History of convergence results with  $k = 1$ for Example \ref{exam2}}\label{linetablek=5}
	\centering
	\footnotesize
{
		\begin{tabular}{p{1.2cm}<{\centering}|p{1.5cm}<{\centering}|p{1.4cm}<{\centering}|p{1.5cm}<{\centering}
|p{1.4cm}<{\centering}|p{1.5cm}<{\centering}|p{1.4cm}<{\centering}|p{1.5cm}<{\centering}}
			\hline
			\multirow{2}{*}{mesh}&
			\multicolumn{2}{c|}{$\frac{\|\mathbf{u}-\mathbf{u}_{ho}\|_0}{\|\mathbf{u}\|_0}$}
&\multicolumn{2}{c|}{$\frac{\|\nabla\mathbf{u}-\nabla_{w,k-1}\mathbf{u}_{h}\|_0}{\|\nabla\mathbf{u}\|_0}$}
&\multicolumn{2}{c|}{$\frac{\|\nabla\mathbf{u}-\nabla_{h}\mathbf{u}_{ho}\|_0}{\|\nabla\mathbf{u}\|_0}$}
&\multirow{2}{*}{$div Uh$}
\cr\cline{2-7}
			&error&order&error&order&error&order\cr
			\cline{1-8}
$ 2\times 2 $ & 5.8720e-01   &       - &  5.0386e-01   &       - & 7.5274e-01     &     -& 4.4409e-16
\\
\hline
$ 4\times 4 $ &  1.5498e-01  & 1.92 &  2.6999e-01 &  0.90 &  3.5470e-01 &  1.08 &1.7764e-15
\\
\hline
$ 8\times 8 $ &  3.9245e-02  & 1.98 &  1.3795e-01 &  0.96 &  1.7228e-01 &  1.04 &2.1316e-14
\\
\hline
$ 16\times 16 $ &  9.8383e-03 &  1.99  & 6.9412e-02  & 0.99 & 8.5323e-02  & 1.01 &2.8422e-14
\\
\hline
$ 32\times 32 $ &  2.4610e-03 &  1.99  & 3.4774e-02 &  0.99   &4.2550e-02 &  1.00 &1.1369e-13
\\
\hline
$ 64\times 64 $ &  6.1533e-04  & 1.99  & 1.7399e-02  & 0.99  &  2.1260e-02  & 1.00 &9.5497e-12
\\
\hline
$ 128\times 128 $  & 1.5384e-04  & 2.00 &  8.7017e-03 &  0.99 &  1.0628e-02 &  1.00 &1.0368e-10
\\
\hline

		\end{tabular}
	}
	
{
		\begin{tabular}{p{1.2cm}<{\centering}|p{1.5cm}<{\centering}|p{1.4cm}<{\centering}|p{1.5cm}<{\centering}
|p{1.4cm}<{\centering}|p{1.5cm}<{\centering}|p{1.4cm}<{\centering}|p{1.5cm}<{\centering}}
            \hline
			\multirow{2}{*}{mesh}&
			\multicolumn{2}{c|}{$\frac{\|\mathbf{B}-\mathbf{B}_{ho}\|_0}{\|\mathbf{B}\|_0}$}
&\multicolumn{2}{c|}{$\frac{\|\nabla\times\mathbf{B}-\nabla_{w,k-1}\times\mathbf{B}_{h}\|_0}{\|\nabla\mathbf{B}\|_0}$}
&\multicolumn{2}{c|}{$\frac{\|\nabla\times\mathbf{B}-\nabla_{h}\times\mathbf{B}_{ho}\|_0}{\|\nabla\mathbf{B}\|_0}$}
&\multirow{2}{*}{$div Bh$}
\cr\cline{2-7}
			&error&order&error&order&error&order\cr
			\cline{1-8}
$ 2\times 2 $ &  4.7546e+00    &      -  & 9.9565e-01    &      - &  1.5813e+00 &   - &  1.7347e-17
\\
\hline
$ 4\times 4 $  & 1.1650e+00 &  2.02 &  5.6617e-01 &  0.81 & 8.2285e-01 &  0.94 & 3.4694e-18
\\
\hline
$ 8\times 8 $ &  2.9537e-01  & 1.97  & 2.7683e-01 &  1.03  & 3.7935e-01 &  1.11 & 3.1919e-16
\\
\hline
$ 16\times 16 $ &  7.4159e-02  & 1.99  & 1.3199e-01 &  1.06 &  1.6468e-01 &  1.20 &  9.7145e-17
\\
\hline
$ 32\times 32 $  & 1.8545e-02  & 1.99  & 6.4625e-02  & 1.03  & 7.5899e-02 &  1.11 & 1.4218e-14
\\
\hline
$ 64\times 64 $ &  4.6359e-03 &  2.00 &  3.2115e-02  & 1.00 &  3.6869e-02  & 1.04  &  5.5865e-14
\\
\hline
$ 128\times 128 $  & 1.1589e-03 &  2.00 &  1.6032e-02 &  1.00&   1.8283e-02 &  1.01 &  5.3192e-13
\\
\hline
		\end{tabular}
	}
	{
		\begin{tabular}{p{1.2cm}<{\centering}|p{1.5cm}<{\centering}|p{0.9cm}<{\centering}|p{1.5cm}<{\centering}
|p{0.9cm}<{\centering}|p{1.5cm}<{\centering}|p{1.0cm}<{\centering}|p{1.5cm}<{\centering}|p{1.0cm}<{\centering}}
     \hline
			\multirow{2}{*}{mesh}&
			\multicolumn{2}{c|}{$\frac{\| p-p_{ho}\|_0}{\| p\|_0}$}
&\multicolumn{2}{c|}{$\frac{h\|\nabla p-\nabla_{w,k}p_{h}\|_0}{\|\nabla r\|_0}$}
&\multicolumn{2}{c|}{$\frac{\|r-r_{ho}\|_0}{\| r\|_0}$}
&\multicolumn{2}{c}{$\frac{h\|\nabla r-\nabla_{w,k}r_{h}\|_0}{\|\nabla r\|_0}$}
\cr\cline{2-9}
			&error&order&error&order&error&order&error&order\cr
			\cline{1-9}
$ 2\times 2 $ & 1.5904e+00  &        -&  5.0377e-01   &       - &  4.8444e+00  & -&  7.4476e-01 &       -
\\
\hline
$ 4\times 4 $  &  8.2249e-01 &  0.95&2.4127e-01 &  1.06 &  3.5434e-01  &     -0.84&  4.8998e-01 &  0.61 \\
\hline
$ 8\times 8 $ &  4.0154e-01 &  1.03&  1.2071e-01  & 0.99&  5.1369e-01 &  0.82 &  2.1646e-01 &  1.17 \\
\hline
$ 16\times 16 $ & 1.9969e-01 &  1.00&6.0868e-02 &  0.98 & 2.9593e-01  &  0.80&  9.4338e-02 &  1.19 \\
\hline
$ 32\times 32 $ & 9.9854e-02  & 0.99& 3.0661e-02 &  0.98 &  1.8125e-01 &   1.01&  4.6403e-02  & 1.02 \\
\hline
$ 64\times 64 $ &  4.9969e-02  & 0.99&  1.5415e-02 &  0.99  & 7.2736e-02  &  1.01&  2.5885e-02 &  0.90\\
\hline
$ 128\times 128 $ &  2.5000e-02 &  0.99 & 7.7379e-03   &0.99 &  3.6275e-02  &  1.00 & 1.5930e-02&   0.8 \\
\hline
		\end{tabular}
	}	
\end{table}

\begin{table}[H]
	\normalsize
	\caption{History of convergence results with  $k = 2$ for Example \ref{exam2}}\label{linetablek=6}
	\centering
	\footnotesize
{
		\begin{tabular}{p{1.2cm}<{\centering}|p{1.5cm}<{\centering}|p{1.4cm}<{\centering}|p{1.5cm}<{\centering}
|p{1.4cm}<{\centering}|p{1.5cm}<{\centering}|p{1.4cm}<{\centering}|p{1.5cm}<{\centering}}
			\hline
			\multirow{2}{*}{mesh}&
			\multicolumn{2}{c|}{$\frac{\|\mathbf{u}-\mathbf{u}_{ho}\|_0}{\|\mathbf{u}\|_0}$}
&\multicolumn{2}{c|}{$\frac{\|\nabla\mathbf{u}-\nabla_{w,k-1}\mathbf{u}_{h}\|_0}{\|\nabla\mathbf{u}\|_0}$}
&\multicolumn{2}{c|}{$\frac{\|\nabla\mathbf{u}-\nabla_{h}\mathbf{u}_{ho}\|_0}{\|\nabla\mathbf{u}\|_0}$}
&\multirow{2}{*}{$div Uh$}
\cr\cline{2-7}
			&error&order&error&order&error&order\cr
			\cline{1-8}
			
$2\times 2$  & 1.1135e-01    &      - &  1.6761e-01    &      -  & 4.3095e-01    &     - &4.2931e-15
\\
\hline
$4\times 4$  & 1.4860e-02 &  2.91 &  4.3804e-02 &  1.94 &  1.0261e-01 &  2.07 &1.3417e-14
\\
\hline
$8\times 8$  & 1.8798e-03 &  2.99  & 1.1123e-02  & 1.98 &  2.4719e-02  & 2.05 &1.1394e-13
\\
\hline
$16\times 16$ &  2.3542e-04  & 3.00 &  2.7981e-03 &  1.99 & 6.1043e-03 &  2.01 &1.8455e-13
\\
\hline
$32\times 32$ &  2.9435e-05 &  3.00 &  7.0135e-04  & 2.00 &  1.5210e-03 &  2.00 &1.1427e-12
\\
\hline
$64\times 64$ &  3.6793e-06  & 3.00  & 1.7554e-04 &  2.00 & 3.7993e-04 &  2.00  &6.1542e-12
\\
\hline
$128\times 128$ &  4.5990e-07  & 3.00&  4.3909e-05 &  2.00  & 9.4962e-05 &  2.00 &1.8475e-10
\\
\hline

		\end{tabular}
	}
	
{
		\begin{tabular}{p{1.2cm}<{\centering}|p{1.5cm}<{\centering}|p{1.4cm}<{\centering}|p{1.5cm}<{\centering}
|p{1.4cm}<{\centering}|p{1.5cm}<{\centering}|p{1.4cm}<{\centering}|p{1.5cm}<{\centering}}
            \hline
			\multirow{2}{*}{mesh}&
			\multicolumn{2}{c|}{$\frac{\|\mathbf{B}-\mathbf{B}_{ho}\|_0}{\|\mathbf{B}\|_0}$}
&\multicolumn{2}{c|}{$\frac{\|\nabla\times\mathbf{B}-\nabla_{w,k-1}\times\mathbf{B}_{h}\|_0}{\|\nabla\mathbf{B}\|_0}$}
&\multicolumn{2}{c|}{$\frac{\|\nabla\times\mathbf{B}-\nabla_{h}\times\mathbf{B}_{ho}\|_0}{\|\nabla\mathbf{B}\|_0}$}
&\multirow{2}{*}{$div Bh$}
\cr\cline{2-7}
			&error&order&error&order&error&order\cr
			\cline{1-8}
$2\times 2$  & 7.3926e-01   &       -  & 5.9072e-01  &        - &  1.1218e+00  &  -& 5.0090e-17
\\
\hline
$4\times 4$  & 1.9145e-01  & 1.95 &  1.3030e-01 &  2.18 &  4.3639e-01  & 1.36 &2.8295e-16
\\
\hline
$8\times 8$ &  3.2158e-02 &  2.57  & 2.8597e-02 &  2.18  & 1.3865e-01 &  1.65 &4.8643e-16
\\
\hline
$16\times 16$ &  4.6052e-03  & 2.80 &  7.0452e-03  & 2.02 &  3.8347e-02  & 1.85 &1.3153e-15
\\
\hline
$32\times 32$  & 6.1557e-04 &  2.90 &  1.7668e-03  & 2.00 &  9.9908e-03 &  1.94 &2.0630e-14
\\
\hline
$64\times 64$  & 8.1469e-05 &  2.92 &  4.4291e-04 &  2.00  & 2.5434e-03  & 1.97 &1.1610e-13
\\
\hline
$128\times 128$ &  1.1444e-05 &  2.83 &  1.1089e-04  & 2.00 &  6.4124e-04 &  1.98 &7.3211e-13
\\
\hline

		\end{tabular}
	}
{
		\begin{tabular}{p{1.2cm}<{\centering}|p{1.5cm}<{\centering}|p{0.9cm}<{\centering}|p{1.5cm}<{\centering}
|p{0.9cm}<{\centering}|p{1.5cm}<{\centering}|p{1.0cm}<{\centering}|p{1.5cm}<{\centering}|p{1.0cm}<{\centering}}
     \hline
			\multirow{2}{*}{mesh}&
			\multicolumn{2}{c|}{$\frac{\| p-p_{ho}\|_0}{\| p\|_0}$}
&\multicolumn{2}{c|}{$\frac{h\|\nabla p-\nabla_{w,k}p_{h}\|_0}{\|\nabla r\|_0}$}
&\multicolumn{2}{c|}{$\frac{\|r-r_{ho}\|_0}{\| r\|_0}$}
&\multicolumn{2}{c}{$\frac{h\|\nabla r-\nabla_{w,k}r_{h}\|_0}{\|\nabla r\|_0}$}
\cr\cline{2-9}
			&error&order&error&order&error&order&error&order\cr
			\cline{1-9}
$ 2\times 2 $ & 7.9002e-01     &     -&  4.8151e-01    &   -&6.4267e+00  &   -  & 3.5362e-01     &     - \\
\hline
$ 4\times 4 $ &  1.4304e-01 &  2.47 & 2.4399e-01 &  1.98 &  1.6916e+00  & 1.93 &  8.8431e-02  & 1.99 \\
\hline
$ 8\times 8 $ &  2.9207e-02&   2.30&  1.2339e-01 &  1.98 & 4.8829e-01  & 1.79&  2.2139e-02 &  1.99 \\
\hline
$ 16\times 16 $ &  6.6988e-03 &  2.12&6.2086e-02  & 1.99&  1.2901e-01 &  1.92 &  5.5663e-03   & 1.97 \\
\hline
$ 32\times 32 $ &  1.6187e-03 &  2.05& 3.1145e-02 &  1.99&  3.2709e-02 &  1.98 &  7.0588e-02 &  1.90 \\
\hline
$ 64\times 64 $ &  3.9905e-04 &  2.02&  1.5599e-02 &  1.99& 8.1999e-03  & 2.00 &  2.2239e-02 &  1.89\\
\hline
$ 128\times 128 $ & 9.9147e-05  & 2.01 & 7.8059e-03 &  1.99&  2.0504e-03  & 2.00 &  8.7574e-03 &  1.89 \\
\hline
		\end{tabular}
	}	
\end{table}

\section{Conclusions}

In this paper, we have developed a weak Galerkin     method of arbitrary order
 for the steady incompressible Magnetohydrodynamics flow.  The well-posedness of the discrete scheme has been established.
 The method yields globally divergence-free  approximations of velocity and magnetic field, and is of optimal order convergence for the velocity, the magnetic field, the pressure, and the magnetic pseudo-pressure approximations.
 The proposed Oseen's iteration algorithm is unconditionally convergent. Numerical experiments have verified the theoretical results.

\end{document}